\def\red{\color{red}}
\def\bint{{\ifinner\rlap{\bf\kern.30em--}
\int\else\rlap{\bf\kern.35em--}\int\fi}\ignorespaces}
\def\sbint{{\ifinner\rlap{\bf\kern.32em--}
\hspace{0.078cm}\int\else\rlap{\bf\kern.45em--}\int\fi}\ignorespaces}
\def\rr{\mathbb{R}}
\def\rn{\mathbb{R}^n}
\def\nn{\mathbb{N}}
\def\zz{\mathbb{Z}}
\def\lz{\lambda}
\def\dz{\delta}
\def\gz{{\gamma}}
\def\ls{\lesssim}
\def\fz{\infty}
\def\az{\alpha}
\def\cb{{\mathcal B}}
\def\cq{{\mathcal Q}}
\def\BMO{\mathrm{BMO}}
\def\r{\right}
\def\lf{\left}
\def\gfz{\genfrac{}{}{0pt}{}}
\def\noz{{\nonumber}}
\def\r{\right}
\def\lf{\left}
\def\gfz{\genfrac{}{}{0pt}{}}
\def\loc{{\mathop\mathrm{\,loc\,}}}
\def\BMO{{\mathop\mathrm{\,BMO\,}}}
\def\eqref#1{(\ref{#1})}
\def\blue{\color{blue}}
\theoremstyle{definition}
\newtheorem{theorem}{Theorem}[section]
\newtheorem{proposition}[theorem]{Proposition}
\newtheorem{lemma}[theorem]{Lemma}
\newtheorem{corollary}[theorem]{Corollary}
\newtheorem{definition}[theorem]{Definition}
\newtheorem{remark}[theorem]{Remark}
\numberwithin{equation}{section}
\begin{document}
	
\title{\bf\Large John--Nirenberg-$Q$ Spaces via Congruent Cubes
\footnotetext{\hspace{-0.35cm} 2020 {\it
			Mathematics Subject Classification}. Primary 42B35; Secondary 46E30, 46E35. \endgraf
{\it Key words and phrases}. John--Nirenberg space, congruent cube,
$Q$ space, (fractional) Sobolev space, mean oscillation,
dyadic cube, composition operator.
		\endgraf
This project is partially supported by the National Natural Science Foundation of China
(Grant Nos. 12122102 and 11871100)
and the National Key Research
and Development Program of China
(Grant No. 2020YFA0712900).
}
}
\date{ }
\author{Jin Tao, Zhenyu Yang and
Wen Yuan\footnote{Corresponding author, E-mail: wenyuan@bnu.edu.cn/
{\red December 1, 2021}/Final version.}
}
\maketitle

\vspace{-0.7cm}

\begin{center}
\begin{minipage}{13cm}
{\small {\bf Abstract}\quad
To shed some light on the John--Nirenberg space,
the authors in this article introduce
the John--Nirenberg-$Q$ space via congruent cubes,
$JNQ^\alpha_{p,q}(\mathbb{R}^n)$,
which when $p=\fz$ and $q=2$ coincides with
the space $Q_\alpha(\mathbb{R}^n)$ introduced by
Ess\'en et al. in [Indiana Univ. Math. J. 49 (2000), 575-615].
Moreover, the authors show that, for some particular indices,
$JNQ^\alpha_{p,q}(\mathbb{R}^n)$
coincides with the congruent John--Nirenberg space,
or that the (fractional) Sobolev space is continuously embedded into
$JNQ^\alpha_{p,q}(\mathbb{R}^n)$.
Furthermore, the authors characterize
$JNQ^\alpha_{p,q}(\mathbb{R}^n)$ via mean oscillations,
and then use this characterization to study
the dyadic counterparts.
Also, the authors obtain some properties of composition operators
on such spaces.
The main novelties of this article are twofold:
establishing a general equivalence principle for
a kind of `almost increasing' set function introduced
in this article,
and using the fine geometrical properties of
dyadic cubes to properly classify any collection of cubes with
pairwise disjoint interiors and equal edge length.
}
\end{minipage}
\end{center}

\vspace{0.1cm}

\tableofcontents

\vspace{0.1cm}

\section{Introduction}

Throughout the whole article, a \emph{measurable function $f$}
means that $f$ is Lebesgue measurable and $|f|<\fz$ almost everywhere;
a \emph{cube $Q$} means that it has finite edge length and
all its edges parallel to the coordinate axes,
but $Q$ is not necessary to be open or closed.
Recall that the \emph{Lebesgue space} $L^q(\rn)$
with $q\in[1,\fz]$ is defined to be the set of
all measurable functions $f$ on $\rn$ such that
$$\|f\|_{L^q(\rn)}:=
\begin{cases}
	\displaystyle{\lf[\int_{\rn}|f(x)|^q\,dx\right]^\frac1q},
	&\quad q\in [1,\fz),\\
	\displaystyle{\mathop{\mathrm{ess\,sup}}_{x\in\rn}|f(x)|},
	&\quad q=\fz
\end{cases}$$
is finite.
In what follows, we use $\mathbf{1}_E$ to denote
the \emph{characteristic function} of any set $E\subset\rn$,
and $L_{\loc}^q(\rn)$ to denote
the set of all $f\in L_{\loc}^1(\rn)$ such that
$f{\mathbf 1}_E\in L^q(\rn)$
for any bounded measurable set $E\subset\rn$.

The following John--Nirenberg space via congruent cubes
(for short, the congruent John--Nirenberg space)
was introduced by Jia et al. in \cite{jtyyz21}
to shed some light on the mysterious space $JN_p$ and the space $\cb$,
which are introduced, respectively, by John and Nirenberg \cite{jn61}
and Bourgain et al. \cite{bbm15}.
In what follows, for any $\ell\in(0,\fz)$, let
$\Pi_\ell$ denote the class of all collections $\{Q_j\}_j$ of subcubes of $\rn$
with pairwise disjoint interiors and equal edge length $\ell$.
Moreover, for any $f\in L_{\loc}^1(\rn)$ and
any bounded measurable set $E\subset\rn$ with positive measure,
let
\begin{align*}
f_E:=\fint_E f(y)\,dy:=\frac{1}{|E|}\int_E f(y)\,dy.
\end{align*}

\begin{definition}\label{def-JNconpqa}
Let $p,\ q\in[1,\fz)$.
The \emph{congruent John--Nirenberg space $JN^{\rm con}_{p,q}(\rn)$}
is defined to be the set of all $f\in L_{\loc}^1(\rn)$ such that
\begin{align*}
\|f\|_{JN^{\rm con}_{p,q}(\rn)}
:=\sup_{\gfz{\ell\in(0,\fz)}{\{Q_j \}_{j}\in\Pi_\ell}}
\lf\{\sum_{j}\lf|Q_{j}\r|\lf[{\rm MO}_{f,q}(Q_j)\r]^p\r\}^{\frac{1}{p}}
<\fz
\end{align*}
with the {\em mean oscillation}
\begin{align}\label{MOfq}
{\rm MO}_{f,q}(Q_j):=
\lf[\fint_{Q_j}\lf|f(x)-f_{Q_j}\r|^{q}\,dx\r]^{\frac 1q}
\end{align}
for any $j$.
Moreover, let $JN_{p}^{\mathrm{con}}(\rn)
:=JN_{p,1}^{\mathrm{con}}(\rn)$.
\end{definition}
Very recently, Jia et al. \cite{jtyyzS1,jtyyzS2,jyyzLP} further showed that
several important operators
(such as the Hardy--Littlewood maximal operator, Calder\'on--Zygmund operators,
fractional integrals, and Littlewood--Paley operators) are bounded
on congruent John--Nirenberg spaces.
Thus, it is meaningful to study and reveal more properties of
the congruent John--Nirenberg space.

Another well-known space appeared in John and Nirenberg \cite{jn61}
is $\BMO(\rn)$,
the space containing functions of bounded mean oscillation,
which can be regarded as the limit
space of $JN^{\rm con}_{p,q}(\rn)$ as $p\to\fz$;
see \cite[Proposition 2.21]{jtyyz21} and also
\cite[Proposition 2.6]{tyyNA}.
The space $\BMO(\rn)$ has wide applications
in harmonic analysis and partial differential equations;
see, for instance, \cite{bdlwy19,cdlsy21,cdlsy17,
dllw19,dlsvwy21,dlwy21,dy05,lw17,txyyJFAA,ycyy20}.
In particular, we refer the reader to \cite{tyyM} for a systematic survey
on function spaces of John--Nirenberg type.
Later, Ess\'en et al. \cite{ejpx00} introduced $Q$ spaces
which generalize the space $\BMO(\rn)$.
To be precise, the {\em space $Q_\az(\rn)$} with $\az\in\rr$
is defined to be the set of all measurable functions $f$ on $\rn$
such that
$$\|f\|_{Q_{\az}(\rn)}:=\sup_{{\rm cube\ }Q}
\lf[|Q|^{\frac{2\az}n-1}\int_{Q}\int_{Q}
\frac{|f(x)-f(y)|^2}{|x-y|^{n+2\az}}\,dy\,dx\r]^\frac12$$
is finite,
where the supremum is taken over all cubes $Q$ of $\rn$.
Then Ess\'en et al. in \cite[Theorem 2.3(iii)]{ejpx00} showed that,
for any $\az\in(-\fz,0)$,
$Q_\az(\rn)=\BMO(\rn)$ with equivalent norms.
Later, based on the open problems posed in \cite[Section 8]{ejpx00},
such $Q$ spaces have attracted a lot of attention;
we refer the reader to the recent monograph \cite{x19}
for the elaborate developments of $Q$ spaces.
Therefore, it is natural to consider the corresponding
$Q$ spaces of congruent John--Nirenberg spaces,
which is the main motivation of this article.

In this article,
we introduce the following John--Nirenberg-$Q$ space via congruent cubes
(for short, the $JNQ$ space),
$JNQ^\alpha_{p,q}(\mathbb{R}^n)$,
which when $p=\fz$ and $q=2$ coincides with
the space $Q_\alpha(\mathbb{R}^n)$.
Moreover, we show that, for some particular indices,
the $JNQ^\alpha_{p,q}(\mathbb{R}^n)$
coincides with the congruent John--Nirenberg space,
or that the (fractional) Sobolev space is continuously embedded into
$JNQ^\alpha_{p,q}(\mathbb{R}^n)$.
Furthermore, we characterize $JNQ^\alpha_{p,q}(\mathbb{R}^n)$
via mean oscillations,
and then use this characterization to study
the dyadic counterparts of $JNQ^\alpha_{p,q}(\mathbb{R}^n)$.
Also, we obtain some properties of composition operators
on such spaces.
The main novelties of this article are twofold:
establishing a general equivalence principle for
a kind of `almost increasing' set function introduced
in this article,
and using the fine geometrical properties of
dyadic cubes to properly classify any collection of cubes with
pairwise disjoint interiors and equal edge length.
\begin{definition}\label{def-JNQpqa}
Let $p,\ q\in[1,\fz)$ and $\az\in\rr$.
The \emph{space $JNQ^{\az}_{p,q}(\rn)$} is defined to be the set of
all measurable functions $f$ on $\rn$  such that
\begin{align*}
\|f\|_{JNQ^\az_{p,q}(\rn)}:=
\sup_{\gfz{\ell\in(0,\fz)}{\{Q_j \}_{j}\in\Pi_\ell}}
\lf\{\sum_{j}\lf|Q_{j}\r|
\lf[\Phi_{f,q,\az}(Q_j) \r]^p \r\}^{\frac{1}{p}}<\fz,
\end{align*}
here and thereafter, for any cube $Q$ of $\rn$,
\begin{align}\label{Phifqa}
\Phi_{f,q,\az}(Q):=
\lf[|Q|^{\frac{q\az}n-1}\int_{Q}\int_{Q}
\frac{|f(x)-f(y)|^q}{|x-y|^{n+q\az}}\,dy\,dx \r]^\frac{1}q.
\end{align}
\end{definition}
\begin{remark}\label{rem-moC}
\begin{enumerate}
\item[\rm (i)]
Since $\|f\|_{JNQ^\az_{p,q}(\rn)}=0$ if and only if
$f$ is a constant almost everywhere,
we regard $JNQ^\az_{p,q}(\rn)$ as a function space of modulo constants.
Thus, throughout the whole article, we simply write
$\{{\rm a.\,e.\ constant}\}$ by $\{0\}$.
	
\item[\rm (ii)]
Let $q\in[1,\fz)$ and $\az\in\rr$.
The \emph{space $JNQ^{\az}_{\fz,q}(\rn)$}
can be automatically defined to be the set of
all measurable functions $f$ on $\rn$  such that
$$\|f\|_{JNQ^\az_{\fz,q}(\rn)}:=\sup_{{\rm cube\ }Q}
\Phi_{f,q,\az}(Q)<\fz,$$
where $\Phi_{f,q,\az}(Q)$ is as in \eqref{Phifqa},
and the supremum is taken over all cubes $Q$ of $\rn$.
Then
$$JNQ^{\az}_{\fz,2}(\rn)=Q_\az(\rn)$$
with equal norms,
and hence $Q_\az(\rn)$ can be regarded as the limit space of
$JNQ^{\az}_{p,2}(\rn)$ as $p\to\fz$;
see also Proposition \ref{prop-limit} below.
\end{enumerate}
\end{remark}

The remainder of this article is organized as follows.

Section \ref{sec-rela} is devoted to
revealing the relations between $JNQ$ spaces and
some other function spaces
including congruent John--Nirenberg spaces,
$Q$ spaces, and (fractional) Sobolev spaces.
To be precise, we show $JNQ^\az_{p,q}(\rn)=JN^{\rm con}_{p,q}(\rn)$
for any $\az\in(-\fz,0)$ in Theorem \ref{thm-JNQ123} below.
A main tool used in this proof is the equivalent integral-type
norm (see Corollary \ref{coro-IntJNQ} below).
Indeed, we obtain a more general equivalence principle
in Proposition \ref{prop-integral} below via
introducing the `almost increasing' set function $\Phi$
(see Definition \ref{def-ai} below).
It should be pointed out that
Proposition \ref{prop-integral} may have independent interest
because, as a special case of this proposition with $\Phi$
replaced by the mean oscillation as in \eqref{MOfq},
\cite[Proposition 2.2]{jtyyz21} proves extremely useful when
studying the boundedness of the Hardy--Littlewood maximal operator,
Calder\'on--Zygmund operators, fractional integrals,
and Littlewood--Paley operators on
congruent John--Nirenberg spaces;
see \cite{jtyyzS1,jtyyzS2,jyyzLP} for more details.
Moreover, we show that $Q_\az(\rn)$ can be regarded as the limit space of
$JNQ^{\az}_{p,2}(\rn)$ as $p\to\fz$ in Proposition \ref{prop-limit} below,
and also obtain an extension result over cubes (with finite edge length)
in Proposition \ref{prop-exten} below.
Furthermore, for some non-negative $\az$, we prove that
the (fractional) Sobolev space is continuously embedded into
the $JNQ$ space in Propositions \ref{prop-Wsp} and \ref{prop-W1r} below.
At the end of this section, we sum up these relations
in Theorem \ref{thm-clas} below, which is a complete classification
over $p$, $q\in[1,\fz)$ and $\az\in\rr$.

In Section \ref{sec-MO&Dya}, we first characterize $JNQ$ spaces
via mean oscillations in Theorem \ref{thm-mean-osc} below.
Then we use this characterization to study
the dyadic counterparts of $JNQ$ spaces which
prove to be the intersection of $JNQ$ spaces
and congruent John--Nirenberg spaces;
see Theorem \ref{thm-dyadic} below.
To handle with a collection of cubes in the supremum of $\|\cdot\|_{JNQ^{\az}_{p,q}(\rn)}$
rather than a single cube in the supremum of $\|\cdot\|_{Q_\az(\rn)}$,
we use some fine geometrical properties of dyadic cubes
to properly classify any collection of cubes with
pairwise disjoint interiors and equal edge length;
see Lemma \ref{lem-partition} below.

Section \ref{sec-co} is devoted to investigating the left and the right
composition operators on $JNQ$ spaces.
As an application of Proposition \ref{prop-W1r}
(namely, the Sobolev space is continuously embedded into the $JNQ$ space),
we show that the left composition operator is bounded
on the $JNQ$ space if and only if the corresponding
function belongs to the Lipschitz space;
see Theorem \ref{thm-LC} below.
Moreover, we give a brief discussion about
the right composition operator on the $JNQ$ space;
see Proposition \ref{prop-NoQCM} and
Remark \ref{rem-RCO} below.

Finally, we  make some conventions on notation.
Let $\mathbb{N}:=\{1,\,2,...\}$,
$\mathbb{Z}_+:=\mathbb{N}\cup\{0\}$,
$\mathbb{Z}_+^n:=(\mathbb{Z}_+)^n$,
$\mathbf{0}$ denote the \emph{origin} of the Euclidean space,
and $\nabla b$ the \emph{gradient} of $b$.
For any $\gz:=(\gz_1,\ldots,\gz_n)\in\zz_+^n$, let
$D^\gz:=(\frac\partial{\partial x_1})^{\gz_1}
\cdots(\frac\partial{\partial x_n})^{\gz_n}$.
For any $s\in\nn$, $C^s(\rn)$ denotes the set of
all functions $f$ on $\rn$ whose derivatives
$\{D^\gz f\}_{|\gz|= s}$ are continuous.
In addition, we use $C(\rn)$ to denote the set of
all continuous functions on $\rn$,
and $C^\fz(\rn)$ the set of
all infinitely differentiable functions on $\rn$.
For any $p\in[1,\fz]$, let $p'$ be its \emph{conjugate index},
that is, $p'$ satisfies  $1/p+1/p'=1$.
We use ${\mathbf 1}_E$ to denote the \emph{characteristic function}
of a set $E\subset\rn$,
and $|E|$ the \emph{Lebesgue measure} when $E\subset\rn$ is measurable.
Also, we use $\mathbf{0}$ to denote the \emph{origin} of $\rn$,
and $\sharp A$ the cardinality of the set $A$.
Moreover, for any $z\in\rn$ and $\ell\in(0,\fz)$,
we use $Q(z,\ell)$ to denote the cube centered at
$z$ with the length $\ell$;
for any cube $Q$ of $\rn$ and any $\lz\in(0,\fz)$,
we use $\lz Q$ to denote the cube with edge length $\lz\ell(Q)$
and the same center as $Q$.

\section{Relations with Other Function Spaces}\label{sec-rela}

In this section, we show the relations between $JNQ$ spaces
with congruent John--Nirenberg spaces,
$Q$ spaces, and fractional Sobolev spaces in,
respectively, Subsections \ref{subsec-JN}, \ref{subsec-Q},
and \ref{subsec-Sob}.

\subsection{Relations with Congruent John--Nirenberg Spaces}\label{subsec-JN}

We first observe $JNQ^{-n/2}_{p,q}(\rn)=JN^{\rm con}_{p,q}(\rn)$
via the following lemma.
\begin{lemma}\label{osc-equ}
Let $q\in[1,\fz)$, $E$ be a bounded measurable set
in $\rn$ with positive measure,
and $f$ a measurable function on $\rn$.
\begin{itemize}
\item[\rm (i)]
If $f{\mathbf 1}_E\in L^1(\rn)$, then
\begin{align*}
\lf[\fint_E|f(x)-f_E|^q\,dx\r]^\frac1q
\le \lf[\fint_E\fint_E|f(x)-f(y)|^q\,dy\,dx \r]^\frac1q
\le2\lf[\fint_E|f(x)-f_E|^q\,dx\r]^\frac1q
\end{align*}
and, in particular,
\begin{align*}
2\fint_E|f(x)-f_E|^2\,dx
=\fint_E\fint_E|f(x)-f(y)|^2\,dy\,dx.
\end{align*}

\item[\rm (ii)]
If $f{\mathbf 1}_E\notin L^1(\rn)$, then
$$\fint_E\fint_E|f(x)-f(y)|^q\,dy\,dx=\fz.$$
\end{itemize}
\end{lemma}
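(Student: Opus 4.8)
The plan is to prove (i) by sandwiching the double average between one and two copies of the single mean oscillation---using Jensen's inequality for the lower bound and the Minkowski (triangle) inequality for the upper bound---and to prove (ii) by contraposition.

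For the lower bound in (i), I would first rewrite the deviation $f(x)-f_E$ as an average: since $f_E=\fint_E f(y)\,dy$, we have $f(x)-f_E=\fint_E[f(x)-f(y)]\,dy$ for every $x\in E$. Because $t\mapsto|t|^q$ is convex on $\rr$ when $q\in[1,\fz)$, Jensen's inequality gives $|f(x)-f_E|^q\le\fint_E|f(x)-f(y)|^q\,dy$; averaging this over $x\in E$ and taking $q$-th roots yields the left-hand inequality. For the upper bound, I would split $f(x)-f(y)=[f(x)-f_E]-[f(y)-f_E]$ and apply the triangle inequality in $L^q$ of the product average $\fint_E\fint_E\,dy\,dx$. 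The two resulting terms each collapse to $\lf[\fint_E|f(x)-f_E|^q\,dx\r]^{1/q}$ (one after integrating out $y$, the other after integrating out $x$), producing the factor $2$.

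The particular identity for $q=2$ is a direct computation: expanding the square in $\fint_E\fint_E|f(x)-f(y)|^2\,dy\,dx$ and using that the cross term factors as $\fint_E f(x)\,dx\cdot\fint_E f(y)\,dy=f_E^2$ gives $2[\fint_E|f(x)|^2\,dx-f_E^2]=2\fint_E|f(x)-f_E|^2\,dx$. This equality is understood in $[0,\fz]$, and the chain just established in (i) shows the double average and the single mean oscillation are finite simultaneously, so the identity persists even when both sides are infinite.

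For (ii), I would argue by contraposition: assuming the double average is finite, I show $f\mathbf{1}_E\in L^1(\rn)$. By Tonelli's theorem the inner integral $\int_E|f(x)-f(y)|^q\,dy$ is finite for almost every $x\in E$; combining this with the standing convention $|f|<\fz$ almost everywhere, I may fix a point $x_0\in E$ with $|f(x_0)|<\fz$ and $\int_E|f(x_0)-f(y)|^q\,dy<\fz$. Minkowski's inequality then gives $\lf[\int_E|f(y)|^q\,dy\r]^{1/q}\le\lf[\int_E|f(x_0)-f(y)|^q\,dy\r]^{1/q}+|f(x_0)|\,|E|^{1/q}<\fz$, so $f\mathbf{1}_E\in L^q(\rn)\subset L^1(\rn)$ since $|E|<\fz$ and $q\ge1$; this contradicts the hypothesis $f\mathbf{1}_E\notin L^1(\rn)$. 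I expect the only genuinely delicate point to be this selection of the base point $x_0$: one must use both the Tonelli conclusion and the convention $|f|<\fz$ almost everywhere to guarantee that $f(x_0)$ is a genuine finite constant before invoking Minkowski. Everything else reduces to the two convexity/triangle inequalities above.
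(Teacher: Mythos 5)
Your proof of part (i) is correct and follows the same route as the paper: the lower bound via writing $f(x)-f_E=\fint_E[f(x)-f(y)]\,dy$ and applying Jensen/H\"older to the convex function $t\mapsto|t|^q$, the upper bound via the Minkowski (triangle) inequality after inserting $f_E$, and the $q=2$ identity by expanding the square so that the cross term factors into $f_E\overline{f_E}$. For part (ii), however, you take a genuinely different route. The paper argues directly: from $\int_E|f|=\fz$ it extracts the subset $F:=\{x\in E:\ |f(x)|\ge1\}$, bounds the double average from below by $|E|^{-2}|E\setminus F|\,[\int_F|f|-|F|]$, and concludes that this is infinite. You instead argue by contraposition: assuming the double average is finite, Tonelli produces a base point $x_0\in E$ with $\int_E|f(x_0)-f(y)|^q\,dy<\fz$ and (by the standing convention that $|f|<\fz$ almost everywhere) $|f(x_0)|<\fz$, after which Minkowski and $|E|<\fz$ give $f\mathbf{1}_E\in L^q(E)\subset L^1(E)$. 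Both arguments are valid; yours actually yields the stronger conclusion $f\mathbf{1}_E\in L^q(\rn)$ rather than merely $L^1$, and it sidesteps the degenerate configuration $|E\setminus F|=0$ (i.e., $|f|\ge1$ a.e.\ on $E$) in which the paper's final product reads $0\cdot\fz$ and would need the threshold $1$ replaced by a larger constant. The paper's version, in exchange, is self-contained and constructive, exhibiting explicitly why the integral diverges without invoking Tonelli. Your identification of the selection of $x_0$ as the delicate step is exactly right, and your handling of it (intersecting two full-measure subsets of the positive-measure set $E$) is sound.
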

\begin{proof}
Let $q$, $E$, and $f$ be as in the present lemma.
We first prove (i).
In this case, we have $f_E<\fz$.
By this, the H\"older inequality, and the Minkowski inequality,
we have
\begin{align*}
&\lf[\fint_E|f(x)-f_E|^q\,dx\r]^\frac1q\\
&\quad\le\lf\{\fint_E\lf[\fint_E|f(x)-f(y)|\,dy\r]^q\,dx \r\}^\frac1q
\le\lf[\fint_E\fint_E|f(x)-f(y)|^q\,dy\,dx \r]^\frac1q\\
&\quad\le\lf[\fint_E\fint_E|f(x)-f_E|^q\,dy\,dx \r]^\frac1q
+\lf[\fint_E\fint_E|f_E-f(y)|^q\,dy\,dx \r]^\frac1q\\
&\quad=2\lf[\fint_E|f(x)-f_E|^q\,dx\r]^\frac1q.
\end{align*}
Moreover, for the case $q=2$, we have
\begin{align*}
&\fint_E|f(x)-f_E|^2\,dx\\
&\quad=\fint_E[f(x)-f_E]\overline{[f(x)-f_E]}\,dx
=\fint_E f(x)\overline{f(x)}\,dx
-f_E\overline{f_E}
\end{align*}
and
\begin{align*}
&\fint_E\fint_E|f(x)-f(y)|^2\,dy\,dx\\
&\quad=\fint_E\fint_E [f(x)-f(y)]\overline{[f(x)-f(y)]}\,dy\,dx
=2\lf[\fint_E f(x)\overline{f(x)}\,dx
-f_E\overline{f_E}\r].
\end{align*}
Thus,
$$2\fint_E|f(x)-f_E|^2\,dx
=\fint_E\fint_E|f(x)-f(y)|^2\,dy\,dx,$$
which completes the proof of (i).

Next, we prove (ii). In this case,
we have $\int_E |f(x)|\,dx=\fz$, and hence there exists
a measurable subset $F\subset E$ such that
$|f(x)|\ge1$ for any $x\in F$.
By this and the H\"older inequality, we conclude that
\begin{align*}
&\lf[\fint_E\fint_E|f(x)-f(y)|^q\,dy\,dx\r]^\frac1q\\
&\quad\ge\fint_E\fint_E|f(x)-f(y)|\,dy\,dx
\ge|E|^{-2}\int_F\int_{E\setminus F}\lf[|f(x)|-|f(y)|\r]\,dy\,dx\\
&\quad\ge|E|^{-2}\int_F\int_{E\setminus F}\lf[|f(x)|-1\r]\,dy\,dx\\
&\quad=|E|^{-2}|E\setminus F|\lf[\int_F |f(x)|\,dx-|F|\r]=\fz,
\end{align*}
where, in the last inequality, we used the observations
$|E|>0$, $|F|\le|E|<\fz$, and
\begin{align*}
\int_F |f(x)|\,dx
&=\int_E |f(x)|\,dx-\int_{E\setminus F} |f(x)|\,dx\\
&\ge \int_E |f(x)|\,dx-|E|=\fz.
\end{align*}
This finishes the proof of (ii) and hence of
Lemma \ref{osc-equ}.
\end{proof}

\begin{remark}\label{JNQpqa=JNQcon}
Let $[\fint_E|f(x)-f_E|^q\,dx]^\frac1q:=\fz$
if $f{\mathbf 1}_E\notin L^1(\rn)$.
Then we can extend the definition of $JN^{\rm con}_{p,q}(\rn)$
to all measurable functions $f$ on $\rn$ such that $\|f\|_{JN^{\rm con}_{p,q}(\rn)}<\fz$.
Therefore, as an immediate consequence of Lemma \ref{osc-equ}, we obtain
$JNQ^{-n/q}_{p,q}(\rn)=JN^{\rm con}_{p,q}(\rn)$
with equivalent norms.
\end{remark}

Motivated by \cite[Proposition 2.2]{jtyyz21}
and \cite[Lemma 5.7]{ejpx00}, we establish an
equivalence principle (namely, Proposition \ref{prop-integral} below)
which shows that,
in the congruent setting (namely, all cubes $\{Q_j\}_j$
have equal edge length), the summation is equivalent to
the integral so long as the integrand $\Phi$ is
`almost increasing'.

\begin{definition}\label{def-ai}
A non-negative set function $\Phi$
on all cubes (or balls) $Q$ is said to be {\em almost increasing}
if there exists a positive constant $C$ such that,
for any cubes (or balls) $Q_1$, $Q_2$ satisfying $Q_1\subset Q_2$
and $|Q_1|\le 2^{-n}|Q_2|$, it holds true that
\begin{align}\label{Q1<Q2}
\Phi(Q_1)\le C \Phi(Q_2).
\end{align}
\end{definition}
\begin{remark}
Two examples of almost increasing set functions:
\begin{itemize}
\item[\rm (i)]
For any cube $Q$,
$${\rm MO}_{f,q}(Q)=
\lf[\fint_{Q}\lf|f(x)-f_{Q}\r|^{q}\,dx\r]^{\frac 1q}$$
as in \eqref{MOfq}
with $f\in L^q_\loc(\rn)$ and $q\in[1,\fz)$;
see \cite[Lemma 2.1]{jtyyz21}.

\item[\rm (ii)]
For any cube $Q$,
$$\Psi_{f,2,\az}(Q)
:=\sum_{k=0}^\fz\sum_{I\in\mathscr{D}_k(Q)}
2^{(2\az-n)k}{\rm MO}_{f,2}(I)$$
with $f\in L^2_\loc(\rn)$ and $\az\in(-\fz,\frac12)$;
see \cite[Lemma 5.7]{ejpx00} and also Lemma \ref{Psi-AI} below.
Here and thereafter, for any $k\in\zz_+:=\{0,1,\dots\}$
and any cube $Q$ of $\rn$,
$\mathscr{D}_k(Q)$ denotes the dyadic cubes
contained in $Q$ of level $k$.
\end{itemize}
\end{remark}

In what follows, for any $z\in\rn$ and $\ell\in(0,\fz)$,
we use $Q(z,\ell)$ to denote the cube centered at
$z$ with the edge length $\ell$;
for any cube $Q$ of $\rn$ and any $\lz\in(0,\fz)$,
we use $\lz Q$ to denote the cube with edge length $\lz\ell(Q)$
and the same center as $Q$;
moreover, we use $\sharp A$ to denote the cardinality of the set $A$.

\begin{proposition}\label{prop-integral}
Let $p\in[1,\fz)$ and $\Phi$ be an almost increasing set function
defined on cubes (or balls) of $\rn$ as in Definition \ref{def-ai}.
Then
\begin{align}\label{BQequ}
&\sup_{\gfz{\ell\in(0,\fz)}{\{Q_j \}_{j}\in\Pi_\ell}}
\lf\{\sum_j |Q_j|\lf[\Phi(Q_j)\r]^p\r\}^\frac1p\notag\\
&\quad\sim \sup_{\ell\in(0,\fz)}\lf\{\int_{\rn}
\lf[\Phi(Q(z,\ell))\r]^p\,dz\r\}^\frac1p
\sim \sup_{\gfz{k\in\zz}{\{Q_j \}_{j}\in\Pi_{2^{k}}}}
\lf\{\sum_j |Q_j|\lf[\Phi(Q_j)\r]^p\r\}^\frac1p
\end{align}
with the positive equivalence constants depending only
on $n$ and $\Phi$.
Moreover, \eqref{BQequ} also holds true with $Q(z,\ell)$
replaced by $B(z,\ell):=\{x\in\rn:\ |x-z|<\ell\}$.
\end{proposition}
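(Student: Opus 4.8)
The plan is to establish the three-way equivalence in \eqref{BQequ} by proving a cycle of comparisons among the three quantities, using the almost increasing property \eqref{Q1<Q2} as the only structural input on $\Phi$. Denote by $A$ the first supremum (over all $\ell\in(0,\fz)$ and $\{Q_j\}_j\in\Pi_\ell$), by $B$ the middle integral-type quantity, and by $C$ the third supremum (restricted to dyadic edge lengths $\ell=2^k$ with $k\in\zz$). The trivial containments give $C\le A$ immediately, since the third supremum ranges over a subclass of the collections allowed in the first. Thus the substance lies in showing $A\ls B$, $B\ls C$, and $B\ls A$ (or equivalently closing the loop $A\ls B\ls C\le A$), with constants depending only on $n$ and the constant in \eqref{Q1<Q2}.

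First I would handle $B\ls C$, which is the cleanest direction. Fix $\ell\in(0,\fz)$ and choose $k\in\zz$ with $2^k\le\ell<2^{k+1}$. For each $z\in\rn$, the cube $Q(z,\ell)$ has edge length comparable to $2^k$, so I would cover $\rn$ by the standard dyadic grid of edge length $2^k$ and, for each such dyadic cube $I$, integrate over $z\in I$. The key point is that as $z$ ranges over $I$, the cube $Q(z,\ell)$ is contained in a fixed dilate (say $3Q_I$ for the concentric cube of edge $2^k$), and the almost increasing property lets me bound $\Phi(Q(z,\ell))$ by a constant times $\Phi$ of a fixed dyadic cube at the appropriate level. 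Summing $|I|[\Phi(\cdot)]^p$ over the dyadic grid produces a collection in $\Pi_{2^k}$, giving $B\ls C$.

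Next, for $A\ls B$ I would start from an arbitrary collection $\{Q_j\}_j\in\Pi_\ell$ with common edge length $\ell$ and disjoint interiors. The idea is that $\sum_j|Q_j|[\Phi(Q_j)]^p$ should be recovered from the integral $\int_{\rn}[\Phi(Q(z,\ell))]^p\,dz$ by observing that, for each fixed $j$, as $z$ ranges over $Q_j$ the translated cube $Q(z,\ell)$ contains $Q_j$ up to a controlled dilation, so again by \eqref{Q1<Q2} (applied after passing to a slightly larger concentric cube so the factor $|Q_1|\le 2^{-n}|Q_2|$ is met) one has $[\Phi(Q_j)]^p\ls\fint_{Q_j}[\Phi(Q(z,c\ell))]^p\,dz$ for a dimensional dilation constant $c$; multiplying by $|Q_j|$, summing over the disjoint $\{Q_j\}_j$, and using disjointness of interiors to bound the sum of integrals by the single integral over $\rn$ (after rescaling $\ell$ to $c\ell$ inside the supremum) yields $A\ls B$. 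The ball version of the statement follows by the identical argument with $Q(z,\ell)$ replaced by $B(z,\ell)$, since the almost increasing hypothesis is posited for balls as well and nested balls obey the same volume-doubling bookkeeping.

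I expect the main obstacle to be the careful dilation bookkeeping in the direction $A\ls B$: one must arrange the comparison cube so that the volume condition $|Q_1|\le 2^{-n}|Q_2|$ required by Definition \ref{def-ai} is genuinely satisfied (a single application of \eqref{Q1<Q2} only compares a cube to one at least twice its edge length, so controlling $\Phi$ on $Q_j$ by $\Phi$ on a merely comparable cube $Q(z,\ell)$ may require iterating \eqref{Q1<Q2} a bounded, dimension-dependent number of times). Keeping all dilation factors and the resulting change of the parameter $\ell$ inside the supremum consistent across the three quantities, so that the suprema truly match up to constants, is where the proof must be written with care; the analogous subtlety was already navigated in \cite[Proposition 2.2]{jtyyz21} for the special case $\Phi={\rm MO}_{f,q}$, and the present argument abstracts exactly that mechanism.
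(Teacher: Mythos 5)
Your proposal follows essentially the same route as the paper's proof: close the cycle by showing the first supremum is $\ls$ the integral quantity, the integral quantity is $\ls$ the dyadic supremum, and noting the dyadic supremum is trivially $\le$ the first. Your treatment of the direction (first sup) $\ls$ (integral), via $Q_j\subset Q(z,2\ell)$ for $z\in Q_j$, is exactly the paper's; there $|Q_j|=2^{-n}|Q(z,2\ell)|$ meets the volume condition of Definition \ref{def-ai} in a single application, so no iteration is needed. Two bookkeeping points in the direction (integral) $\ls$ (dyadic sup) do need adjustment, and the paper's choices handle both cleanly. First, with the grid at scale $2^k\le\ell<2^{k+1}$ the dilate $3Q_I$ neither satisfies the volume condition (the ratio $|Q(z,\ell)|/|3Q_I|$ can be as large as $(2/3)^n>2^{-n}$) nor has dyadic edge length $2^{k'}$, so it cannot be fed into the third supremum; the paper instead places the grid at the scale $2^{-m_\ell}$ with $\ell\in(2^{-m_\ell-1},2^{-m_\ell}]$, i.e.\ just above $\ell$, and dilates by exactly $2$, so that $Q(z,\ell)\subset 2Q_{\ell,\mathbf{j}}$ with $|Q(z,\ell)|\le 2^{-n}|2Q_{\ell,\mathbf{j}}|$ and the doubled cubes have admissible edge length $2^{-m_\ell+1}$. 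Second, the doubled (or tripled) cubes overlap, so ``summing over the dyadic grid'' does not directly produce a collection in $\Pi_{2^{k'}}$; one must first split them into $2^n$ subfamilies with pairwise disjoint interiors (indexed by the offsets $\mathbf{i}\in\{0,1\}^n$, as in the paper's set $V$) and compare each subfamily separately with the dyadic supremum, at the cost of a factor $2^n$. Both fixes are routine and you correctly flagged the dilation bookkeeping as the delicate point, so I would not call this a gap in the idea, only in the stated constants.
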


\begin{proof}
let $p$ and $\Phi$ be as in the present proposition.
First, we show that
\begin{align}\label{l-i-1}
&\sup_{\ell\in(0,\fz)}\lf\{\int_{\rn}\lf[\Phi(Q(z,\ell))\r]^p\,dz\r\}^\frac1p
\ls\sup_{\gfz{k\in\zz}{\{Q_j \}_{j}\in\Pi_{2^{k}}}}
\lf\{\sum_j |Q_j|\lf[\Phi(Q_j)\r]^p\r\}^\frac1p.
\end{align}
For any $\ell\in(0,\fz)$,
let $m_\ell\in\zz$ satisfy
$\ell\in(2^{-m_\ell-1},2^{-m_\ell}]$,
and, for any $\mathbf{j}\in\mathbb{Z}^n$,
let $Q_{\ell,\mathbf{j}}:=2^{-m_\ell}{\mathbf{j}+[0,2^{-m_\ell})^n}$.
Then, for any $\ell\in(0,\fz)$ and $\mathbf{j}\in\mathbb{Z}^n$,
by some geometrical observations,
we conclude that, for any $z\in Q_{\ell,\mathbf{j}}$,
$$Q(z,\ell)\subset 2Q_{\ell,\mathbf{j}}
\quad{\rm and}
\quad|Q(z,\ell)|\le 2^{-n}|2Q_{\ell,\mathbf{j}}|,$$
which, together with \eqref{Q1<Q2}, further implies that
\begin{align}\label{ai1}
\Phi(Q(z,\ell))\lesssim \Phi(2Q_{\ell,\mathbf{j}}).
\end{align}
Let $V:=\lf\{\mathbf{i}=(i_1,\ldots,i_n):\ i_1,\ldots,i_n\in\{0,1\}\r\}$
and $2\mathbb{Z}^n:=\{2\mathbf{j}:\ \mathbf{j}\in\mathbb{Z}^n\}$.
Then, for any $\mathbf{i}\in V$ and $\ell\in(0,\fz)$,
using some geometrical observations again,
we find that
\begin{align}\label{Pi2ell}
\lf\{2Q_{\ell,(\mathbf{j}+\mathbf{i})}\r\}_{\mathbf{j}\in2\mathbb{Z}^n}
\in\Pi_{2^{-m_\ell+1}}
\end{align}
and
\begin{align}\label{rn=cupQ}
\rn=\bigcup_{\mathbf{i}\in V}\bigcup_{\mathbf{j}\in 2\mathbb{Z}^n}
Q_{\ell,(\mathbf{j}+\mathbf{i})}.
\end{align}
From \eqref{rn=cupQ}, \eqref{ai1}, \eqref{Pi2ell},
and the fact that $\sharp V=2^n$,
it follows that, for any $\ell\in(0,\fz)$,
\begin{align*}
&\int_{\rn}\lf[\Phi(Q(z,\ell))\r]^p\,dz\\
&\quad=\sum_{\mathbf{i}\in V}\sum_{\mathbf{j}\in 2\mathbb{Z}^n}
\int_{Q_{\ell,(\mathbf{j}+\mathbf{i})}}\lf[\Phi(Q(z,\ell))\r]^p\,dz
\ls\sum_{\mathbf{i}\in V}\sum_{\mathbf{j}\in 2\mathbb{Z}^n}
\int_{Q_{\ell,(\mathbf{j}+\mathbf{i})}}
\lf[\Phi(2Q_{\ell,(\mathbf{j}+\mathbf{i})})\r]^p\,dz\\
&\quad\sim\sum_{\mathbf{i}\in V}\sum_{\mathbf{j}\in 2\mathbb{Z}^n}
\lf|2Q_{\ell,(\mathbf{j}+\mathbf{i})}\r|
\lf[\Phi(2Q_{\ell,(\mathbf{j}+\mathbf{i})})\r]^p
\ls\sharp V\sup_{\gfz{k\in\zz}{\{Q_j \}_{j}\in\Pi_{2^{k}}}}
\lf\{\sum_j |Q_j|\lf[\Phi(Q_j)\r]^p\r\}\\
&\quad\sim\sup_{\gfz{k\in\zz}{\{Q_j \}_{j}\in\Pi_{2^{k}}}}
\lf\{\sum_j |Q_j|\lf[\Phi(Q_j)\r]^p\r\}
\end{align*}
with the implicit positive constants independent of $\ell$.
Taking the supremum over $\ell\in(0,\fz)$,
we find that \eqref{l-i-1} holds true.

Next, we prove that
\begin{align}\label{l-i-2}
&\sup_{\gfz{\ell\in(0,\fz)}{\{Q_j \}_{j}\in\Pi_\ell}}
\lf\{\sum_j |Q_j|\lf[\Phi(Q_j)\r]^p\r\}^\frac1p
\ls \sup_{\ell\in(0,\fz)}\lf\{
\int_{\rn}\lf[\Phi(Q(z,\ell))\r]^p\,dz\r\}^\frac1p.
\end{align}
Let $\{Q_j\}_{j}\in\Pi_\ell$ with $\ell\in(0,\infty)$.
Observe that, for any $z\in Q_{j}$,
$$Q_{j}\subset Q(z,2\ell)\quad{\rm and}\quad|Q_{j}|=2^{-n} |Q(z,2\ell)|,$$
which, combined with \eqref{Q1<Q2}, implies that
$$\Phi(Q_{j})\lesssim \Phi(Q(z,2\ell))$$
and hence
\begin{align*}
\sum_j |Q_j|\lf[\Phi(Q_j)\r]^p
&=\sum_{j}\int_{Q_{j}}\lf[\Phi(Q_j)\r]^p\,dz
\ls\sum_{j}\int_{Q_{j}}\lf[\Phi(Q(z,2\ell))\r]^p\,dz\\
&=\int_{\rn}\lf[\Phi(Q(z,2\ell))\r]^p\,dz.
\end{align*}
This implies \eqref{l-i-2}, and hence \eqref{BQequ} holds true.

Finally, since balls and cubes are mutually comparable,
if $Q(z,\ell)$ in \eqref{BQequ} is replaced by $B_{\ell}(z)$,
then the conclusion still holds true via repeating the above argument;
we omit the details here.
This finishes the proof of Proposition \ref{prop-integral}.
\end{proof}

\begin{corollary}\label{coro-IntJNQ}
Let $p,\ q\in[1,\fz)$ and $\az\in\rr$.
Then $f\in JNQ^{\az}_{p,q}(\rn)$ if and only if
$f$ is measurable on $\rn$ and
\begin{align*}
\|f\|_{\widetilde{JNQ}^\az_{p,q}(\rn)}
:=\sup_{r\in(0,\infty)}\lf\{\int_{\rn}\lf[r^{q\az-n}
\int_{B(z,r)}\int_{B(z,r)}\frac{|f(x)-f(y)|^q}{|x-y|^{n+q\az}}\,dx\,dy \r]^{\frac{p}{q}}\,dz\r\}^{\frac{1}{p}}<\fz.
\end{align*}
Moreover, $\|\cdot\|_{JNQ^{\az}_{p,q}(\rn)}\sim
\|\cdot\|_{\widetilde{JNQ}^\az_{p,q}(\rn)}$.
\end{corollary}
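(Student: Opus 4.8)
The plan is to recognize this corollary as the instance of Proposition \ref{prop-integral} in which the abstract set function $\Phi$ is taken to be $\Phi_{f,q,\az}$ from \eqref{Phifqa}, read off from the ball form of \eqref{BQequ}. First I would match the two norms in the statement with the two outer members of \eqref{BQequ} for this $\Phi$. By Definition \ref{def-JNQpqa}, the left member
$$\sup_{\gfz{\ell\in(0,\fz)}{\{Q_j \}_{j}\in\Pi_\ell}}\lf\{\sum_j|Q_j|[\Phi_{f,q,\az}(Q_j)]^p\r\}^{1/p}$$
is precisely $\|f\|_{JNQ^\az_{p,q}(\rn)}$. For the ball version of the middle member I would use $|B(z,r)|=\omega_n r^n$ with $\omega_n:=|B(\mathbf{0},1)|$, so that $|B(z,r)|^{\frac{q\az}n-1}=\omega_n^{\frac{q\az}n-1}r^{q\az-n}$ and hence
\begin{align*}
[\Phi_{f,q,\az}(B(z,r))]^p
=\omega_n^{(\frac{q\az}n-1)\frac pq}
\lf[r^{q\az-n}\int_{B(z,r)}\int_{B(z,r)}
\frac{|f(x)-f(y)|^q}{|x-y|^{n+q\az}}\,dx\,dy\r]^{\frac pq}.
\end{align*}
Integrating in $z$ and taking the supremum over $r\in(0,\fz)$ identifies this member, up to the fixed constant $\omega_n^{(\frac{q\az}n-1)/q}$, with $\|f\|_{\widetilde{JNQ}^\az_{p,q}(\rn)}$. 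Thus, once $\Phi_{f,q,\az}$ is known to be almost increasing, Proposition \ref{prop-integral} yields at one stroke the equivalence $f\in JNQ^\az_{p,q}(\rn)\Leftrightarrow\|f\|_{\widetilde{JNQ}^\az_{p,q}(\rn)}<\fz$ (the two quantities being finite or infinite together) together with the asserted comparison of norms.

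The key step is therefore to check that $\Phi_{f,q,\az}$ satisfies Definition \ref{def-ai}. Writing $I(Q):=\int_Q\int_Q\frac{|f(x)-f(y)|^q}{|x-y|^{n+q\az}}\,dy\,dx$, the only analytic input is the monotonicity of $I$ under inclusion: the integrand is non-negative, so $Q_1\subset Q_2$ forces $I(Q_1)\le I(Q_2)$. Consequently, for any cubes $Q_1\subset Q_2$,
\begin{align*}
[\Phi_{f,q,\az}(Q_1)]^q
&=|Q_1|^{\frac{q\az}n-1}I(Q_1)
\le\lf(\frac{|Q_1|}{|Q_2|}\r)^{\frac{q\az}n-1}|Q_2|^{\frac{q\az}n-1}I(Q_2)\\
&=\lf(\frac{|Q_1|}{|Q_2|}\r)^{\frac{q\az}n-1}[\Phi_{f,q,\az}(Q_2)]^q.
\end{align*}
When $q\az\ge n$ the exponent $\frac{q\az}n-1$ is non-negative while $|Q_1|/|Q_2|\le1$, so the factor is at most $1$ and \eqref{Q1<Q2} holds with $C=1$ for every admissible pair; this settles the almost increasing property outright.

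The main obstacle is the complementary range $q\az<n$, where $\frac{q\az}n-1<0$ and the factor $(|Q_1|/|Q_2|)^{\frac{q\az}n-1}$ fails to stay bounded as $|Q_1|/|Q_2|\to0$. My resolution is to observe that \eqref{Q1<Q2} is used in the proof of Proposition \ref{prop-integral} only for pairs of \emph{comparable} cubes: the passage to \eqref{ai1} involves the ratio $|Q(z,\ell)|/|2Q_{\ell,\mathbf{j}}|\in[4^{-n},2^{-n}]$, and the passage to \eqref{l-i-2} involves a ratio equal to $2^{-n}$. For such pairs the displayed factor is dominated by the fixed constant $4^{n-q\az}$, so the effective inequality $\Phi_{f,q,\az}(Q_1)\ls\Phi_{f,q,\az}(Q_2)$ required by Proposition \ref{prop-integral} holds with a constant depending only on $n$, $q$, and $\az$; equivalently, I would simply re-run the two estimates \eqref{l-i-1} and \eqref{l-i-2} verbatim with $\Phi:=\Phi_{f,q,\az}$, where $I(Q_1)\le I(Q_2)$ together with these bounded size ratios supplies every appeal to almost increasingness. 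With the hypothesis of Proposition \ref{prop-integral} thereby secured, invoking that proposition in its ball form completes the proof.
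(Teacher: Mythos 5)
Your proposal is correct and follows essentially the same route as the paper: the paper likewise proves the corollary by checking that $\Phi_{f,q,\az}$ is almost increasing and then invoking Proposition \ref{prop-integral} in its ball form. Your treatment is in fact slightly more careful than the paper's: the paper's verification writes $|Q_1|=C_1|Q_2|$ and absorbs $C_1^{\frac{q\az}{n}-1}$ into the constant without comment, which, as you observe, is only legitimate because the two applications of \eqref{Q1<Q2} inside the proof of Proposition \ref{prop-integral} involve volume ratios bounded below (lying in $[4^{-n},2^{-n}]$); your explicit handling of the range $q\az<n$ makes precise a point the paper leaves implicit.
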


\begin{proof}
Let $p,\ q,\ \az,$ and $f$ be as in the present corollary.
For any cube (or ball) $Q$ of $\rn$, let
$\Phi_{f,q,\az}(Q)$ be the same as in \eqref{Phifqa}.
We claim that $\Phi_{f,q,\az}$ is almost increasing as in Definition \ref{def-ai}.
Indeed, for any cubes (or balls) $Q_1$, $Q_2$ satisfying $Q_1\subset Q_2$
and $|Q_1|= C_1|Q_2|$, by the properties of Lebesgue integral, we have
\begin{align*}
\Phi_{f,q,\az}(Q_1)
&=|Q_1|^{\frac{q\az}n-1}
\lf[\int_{Q_1}\int_{Q_1}\frac{|f(x)-f(y)|^q}{|x-y|^{n+q\az}}
\,dx\,dy \r]^{\frac{1}{q}}\\
&\le C_1^{\frac{q\az}n-1}|Q_2|^{\frac{q\az}n-1}
\lf[\int_{Q_2}\int_{Q_2}\frac{|f(x)-f(y)|^q}{|x-y|^{n+q\az}}
\,dx\,dy \r]^{\frac{1}{q}}
=C_1^{\frac{q\az}n-1}\Phi_{f,q,\az}(Q_2),
\end{align*}
and hence $\Phi_{f,q,\az}$ is almost increasing as in Definition \ref{def-ai}.
From this claim and Proposition \ref{prop-integral},
it follows that $\|\cdot\|_{JNQ^{\az}_{p,q}(\rn)}\sim
\|\cdot\|_{\widetilde{JNQ}^\az_{p,q}(\rn)}$,
which completes the proof of Corollary \ref{coro-IntJNQ}.
\end{proof}

The basic properties of $JNQ$ spaces are presented as follows,
which are fine counterparts of the corresponding properties
of $Q$ spaces in \cite[Theorem 2.3]{ejpx00}.
\begin{theorem}\label{thm-JNQ123}
Let $p,\ q\in[1,\fz)$.
\begin{enumerate}
\item[\rm (i)] (Decreasing in $\alpha$) If $-\fz<\az_1\le{\az_2}<\fz$, then
$JNQ^{\az_1}_{p,q}(\rn)\supset JNQ^{\az_2}_{p,q}(\rn)$.

\item[\rm (ii)] (Triviality for large $\az$)
If $\az\in\rr$ satisfies
$\az>n(\frac{1}{q}-\frac{1}{p})$ or $\az\ge 1$,
then $ JNQ^\az_{p,q}(\rn)=\{0\}$.

\item[\rm (iii)] (Triviality for negative $\az$)
If $\az\in(-\fz,0)$, then $JNQ^\az_{p,q}(\rn)=JN^{\rm con}_{p,q}(\rn)$
with equivalent norms.
\end{enumerate}
\end{theorem}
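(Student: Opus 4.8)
My plan is to run all three parts off a common engine, namely the integral characterization of Corollary \ref{coro-IntJNQ} (equivalently Proposition \ref{prop-integral}), together with the base identity $JNQ^{-n/q}_{p,q}(\rn)=JN^{\rm con}_{p,q}(\rn)$ from Remark \ref{JNQpqa=JNQcon} and two elementary comparisons of $\Phi_{f,q,\az}(Q)$ across $\az$. For (i) I would compare $\Phi_{f,q,\az_1}$ and $\Phi_{f,q,\az_2}$ pointwise on a fixed cube $Q$ of edge length $\ell$. Writing $|x-y|^{-(n+q\az_1)}=|x-y|^{q(\az_2-\az_1)}|x-y|^{-(n+q\az_2)}$ and using $|x-y|\le\sqrt n\,\ell$ with $\az_2-\az_1\ge0$, the factor $|x-y|^{q(\az_2-\az_1)}\le n^{q(\az_2-\az_1)/2}\ell^{q(\az_2-\az_1)}$ is absorbed exactly by the normalizing power $|Q|^{q\az/n-1}=\ell^{q\az-n}$, yielding $\Phi_{f,q,\az_1}(Q)\le n^{(\az_2-\az_1)/2}\Phi_{f,q,\az_2}(Q)$ with constant independent of $Q$ (hence of $\ell$). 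Feeding this into the supremum over $\Pi_\ell$ gives $\|f\|_{JNQ^{\az_1}_{p,q}(\rn)}\ls\|f\|_{JNQ^{\az_2}_{p,q}(\rn)}$ and so the inclusion.

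For (ii) there are two regimes. When $\az\ge1$, if $f\in JNQ^\az_{p,q}(\rn)$ then the single-cube collection forces $\Phi_{f,q,\az}(Q)<\fz$ for every cube $Q$; since $|x-y|\le\diam Q$ gives $|x-y|^{-(n+q\az)}\gs|x-y|^{-(n+q)}$ on $Q$, the Gagliardo-type integral of order one is finite on each $Q$, so a Brezis-type rigidity theorem makes $f$ constant on every $Q$, hence a.e. constant. When $\az>n(\frac1q-\frac1p)$, I would use Corollary \ref{coro-IntJNQ}: for non-constant $f$ choose bounded, separated, positive-measure sets $A,B$ on which $f$ oscillates by at least some $\dz>0$; then for all large $r$ and all centers $z$ in a set of measure $\gs r^n$ the ball $B(z,r)$ contains $A\cup B$, so the inner double integral is at least the fixed positive constant $\int_A\int_B\dz^q|x-y|^{-(n+q\az)}\,dx\,dy$ (finite for either sign of $n+q\az$ by separation). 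Hence the integrand is $\gs r^{(q\az-n)p/q}$ on that set and $\|f\|_{\widetilde{JNQ}^\az_{p,q}(\rn)}^p\gs r^{p[\az-n(1/q-1/p)]}\to\fz$ as $r\to\fz$, forcing $f$ constant.

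For (iii) I would split into two inclusions. The inclusion $JNQ^\az_{p,q}(\rn)\subset JN^{\rm con}_{p,q}(\rn)$ comes from a uniform lower bound $\Phi_{f,q,\az}(Q)\gs{\rm MO}_{f,q}(Q)$ valid for every $\az$: restricting $D(Q):=\int_Q\int_Q|f(x)-f(y)|^q|x-y|^{-(n+q\az)}\,dx\,dy$ to the ``far'' pairs $|x-y|\sim\ell$ replaces the kernel by $\ell^{-(n+q\az)}$ up to constants, while such pairs still capture the full mean oscillation (split $Q$ into two halves and invoke Lemma \ref{osc-equ}), so that after the $\ell$-powers cancel one gets $\ell^{q\az-n}D(Q)\gs[{\rm MO}_{f,q}(Q)]^q$. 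For the reverse inclusion $JN^{\rm con}_{p,q}(\rn)\subset JNQ^\az_{p,q}(\rn)$, when $\az\le-n/q$ it is immediate from (i) and Remark \ref{JNQpqa=JNQcon}, since $JNQ^\az_{p,q}(\rn)\supset JNQ^{-n/q}_{p,q}(\rn)=JN^{\rm con}_{p,q}(\rn)$; when $-n/q<\az<0$ I would establish the matching upper bound $\|f\|_{JNQ^\az_{p,q}(\rn)}\ls\|f\|_{JN^{\rm con}_{p,q}(\rn)}$ by a dyadic/annular decomposition of $D(Q)$ over the scales $|x-y|\sim2^{-k}\ell$, dominating the $k$-th annulus by the oscillations of the $\sim2^{kn}$ dyadic subcubes of side $2^{-k}\ell$ and resumming through Proposition \ref{prop-integral}.

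The main obstacle is precisely this last upper bound. Unlike the far-pair lower bound, it must control the near-diagonal contributions of the singular kernel, which forces a reorganization of the collection $\{Q_j\}_j$ across all finer scales and a Fubini count converting $\int_\rn\sum_i[{\rm MO}_{f,q}(b_i^{(k)}(z,r))]^p\,dz$ back into the $JN^{\rm con}$ integral at scale $2^{-k}\ell$ (with multiplicity $\sim2^{kn}$). The resulting scale sum carries the factor $2^{k\,p[\az-n(1/q-1/p)]}$, which converges exactly when $\az<n(\frac1q-\frac1p)$; in the complementary range the space is already $\{0\}$ by (ii), so the two parts must be dovetailed precisely at the threshold $n(\frac1q-\frac1p)$. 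Pinning down this threshold, and arranging the Fubini bookkeeping so that the geometric series closes, is the delicate step on which the whole of (iii) hinges.
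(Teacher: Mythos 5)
Parts (i) and (ii) of your proposal are fine. Your pointwise comparison of $\Phi_{f,q,\az_1}(Q)$ and $\Phi_{f,q,\az_2}(Q)$ is exactly the paper's computation for (i) (carried out there on balls via Corollary \ref{coro-IntJNQ}); for (ii) your two-set oscillation argument in the range $\az>n(\frac1q-\frac1p)$ is a valid, if more elaborate, substitute for the paper's one-line observation that the single-cube inequality
$$\left[\int_{Q}\int_{Q}\frac{|f(x)-f(y)|^q}{|x-y|^{n+q\az}}\,dx\,dy\right]^{\frac1q}\le |Q|^{\frac{1}{q}-\frac{\az}n-\frac1p}\|f\|_{ JNQ^\az_{p,q}(\rn)}$$
forces the left-hand side to vanish as $|Q|\to\fz$, and the $\az\ge1$ case via Brezis-type rigidity matches the paper. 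In (iii), the inclusion $JNQ^{\az}_{p,q}(\rn)\subset JN^{\rm con}_{p,q}(\rn)$ via far pairs is sound in substance, but note that ``split $Q$ into two halves'' does not produce uniformly separated pairs (adjacent halves share a face); the correct device is the third-point average over $\{z\in Q:\ \min(|x-z|,|y-z|)\gtrsim\ell\}$, which is precisely the paper's Case 1 computation \eqref{az<-1/2}. The inclusion $JN^{\rm con}_{p,q}(\rn)\subset JNQ^{\az}_{p,q}(\rn)$ for $\az\le-\frac nq$ via monotonicity and Remark \ref{JNQpqa=JNQcon} is also correct.

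The genuine gap is the inclusion $JN^{\rm con}_{p,q}(\rn)\subset JNQ^{\az}_{p,q}(\rn)$ for $-\frac nq<\az<0$, which you rightly identify as the crux but do not prove, and whose bookkeeping you get wrong. In your annular decomposition the scale-$k$ shell contributes $2^{(q\az-n)k}\sum_{I}[{\rm MO}_{f,q}(I^*)]^q$ over the $\sim 2^{kn}$ subcubes $I$ of each $Q_j$ of side $2^{-k}\ell$; after Minkowski in $\ell^{p/q}$ and H\"older over those subcubes (sharp when all the oscillations are comparable), the $k$-th term is controlled by $2^{q\az k}\|f\|^q_{JN^{\rm con}_{p,q}(\rn)}$, not by $2^{kp[\az-n(\frac1q-\frac1p)]}$. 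The series therefore closes exactly for $\az<0$, not for $\az<n(\frac1q-\frac1p)$; your claimed threshold would give $JNQ^{\az}_{p,q}(\rn)=JN^{\rm con}_{p,q}(\rn)$ for $\az\in[0,\az_0)$ as well, which contradicts the expected picture (already for $p=\fz$ one has $Q_\az(\rn)\subsetneq\BMO(\rn)$ when $\az\in[0,1)$) and is why Theorem \ref{thm-clas} only records Sobolev embeddings in that range. The paper avoids the multiscale sum entirely: by the Tonelli theorem and the substitution $x\mapsto x+y$,
$$\int_{B(z,r)}\int_{B(z,r)}\frac{|f(x)-f(y)|^q}{|x-y|^{n+q\az}}\,dx\,dy
\lesssim\int_{B(\mathbf{0},2r)}\frac{dx}{|x|^{n+q\az}}\int_{B(z,3r)}\left|f(u)-f_{B(z,3r)}\right|^q\,du
\sim r^{-q\az}\int_{B(z,3r)}\left|f(u)-f_{B(z,3r)}\right|^q\,du,$$
where the first factor is finite and $\sim r^{-q\az}$ precisely because $\az<0$ makes $|x|^{-n-q\az}$ locally integrable; one then concludes with Corollary \ref{coro-IntJNQ} and Proposition \ref{prop-integral} applied to $\Phi={\rm MO}_{f,q}$. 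This is a single-scale estimate in which the threshold $\az<0$ appears transparently and the Fubini reorganization you were worried about never arises; I would replace your sketch by it, or at minimum redo the counting in your annular scheme so that the geometric factor comes out as $2^{q\az k}$.
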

\begin{proof}
Let $p,\ q\in[1,\fz)$.
We first show (i).
Let $\az_1$, ${\az_2}\in\rr$ with $\az_1\le{\az_2}$.
Then, for any $f\in JNQ^{\az_2}_{p,q}(\rn)$
and any ball $B$ of $\rn$, we have
\begin{align*}
&|B|^{\frac{q\az_1}n-1}\int_{B}\int_{B}
\frac{|f(x)-f(y)|^q}{|x-y|^{n+q\az_1}}\,dx\,dy\\
&\quad=|B|^{\frac{q\az_1}n-1}\int_{B}\int_{B}
 \frac{|f(x)-f(y)|^q}{|x-y|^{n+q{\az_2}}}|x-y|^{q(\az_2-\az_1)}\,dx\,dy\\
&\quad\ls |B|^{\frac{q\az}n-1+\frac{q(\az_2-\az_1)}n}\int_{B}\int_{B}
 \frac{|f(x)-f(y)|^q}{|x-y|^{n+q{\az_2}}}\,dx\,dy\\
&\quad\sim |B|^{\frac{q{\az_2}}n-1}\int_{B}\int_{B}
\frac{|f(x)-f(y)|^q}{|x-y|^{n+q{\az_2}}}\,dx\,dy,
\end{align*}
which, together with Corollary \ref{coro-IntJNQ}, further implies that
$$\|f\|_{JNQ^{\az_1}_{p,q}(\rn)}\ls\|f\|_{JNQ^{\az_2}_{p,q}(\rn)}$$
and hence $JNQ^{\az_1}_{p,q}(\rn)\supset JNQ^{\az_2}_{p,q}(\rn)$.
This finishes the proof of (i).

We next show (ii).
Let $f\in JNQ^\az_{p,q}(\rn)$.
By Definition \ref{def-JNQpqa},
we conclude that, for any given cube $Q$ of $\rn$
with edge length $\ell(Q)\in(0,\fz)$,
\begin{align}\label{onecube}
\lf[\int_{Q}\int_{Q}\frac{|f(x)-f(y)|^q}{|x-y|^{n+q\az}}\,dx\,dy\r]^\frac1q
\le |Q|^{\frac{1}{q}-\frac{\az}n-\frac1p}\|f\|_{ JNQ^\az_{p,q}(\rn)}.
\end{align}
Letting $|Q|\to\fz$ in \eqref{onecube}, it follows that
$f$ is a constant almost everywhere on $\rn$ if $\az>n(\frac{1}{q}-\frac1p)$.
On the other hand, if $\az\ge1$, then
\begin{align*}
&\int_{Q}\int_{Q}\frac{|f(x)-f(y)|^q}{|x-y|^{n+q\az}}\,dx\,dy
\gtrsim
\int_{Q}\int_{Q}\frac{|f(x)-f(y)|^q}{|x-y|^{n+q}[\ell(Q)]^{q(\az-1)}}\,dx\,dy,
\end{align*}
which, combined with \eqref{onecube}, further implies that
\begin{align*}
\int_{Q}\int_{Q}\frac{|f(x)-f(y)|^q}{|x-y|^{n+q}}\,dx\,dy
\ls|Q|^{1-\frac{q}n-\frac{q}p}\|f\|_{ JNQ^\az_{p,q}(\rn)}
\end{align*}
and hence
\begin{align*}
&\inf_{(x,y)\in Q\times Q}\lf[\frac{|f(x)-f(y)|}{|x-y|}\r]^q
\int_{Q}\int_{Q}\frac{dx\,dy}{|x-y|^{n}}\\
&\quad\le\int_{Q}\int_{Q}\frac{|f(x)-f(y)|^q}{|x-y|^{n+q}}\,dx\,dy
\ls|Q|^{1-\frac{q}n-\frac{q}p}\|f\|_{ JNQ^\az_{p,q}(\rn)}<\fz.
\end{align*}
From this and the observation
$$\int_{Q}\int_{Q}\frac{dx\,dy}{|x-y|^{n}}=\fz,$$
we deduce that
$$\inf_{(x,y)\in Q\times Q}\lf[\frac{|f(x)-f(y)|}{|x-y|}\r]^q=0$$
for any cube $Q$ of $\rn$,
which further implies that $f$ is a constant almost everywhere on $\rn$.
This finishes the proof of (ii).

Now, we show (iii).
Recall that $JN^{\rm con}_{p,q}(\rn)=JNQ^{-n/q}_{p,q}(\rn)$;
see Remark \ref{JNQpqa=JNQcon}.
Thus, to prove (iii), it suffices to show that
$ JNQ^\az_{p,q}(\rn)=JNQ^{-n/q}_{p,q}(\rn)$ for any $\az\in(-\fz,0)$,
and we consider the following two cases on $\az$.

\emph{Case} 1) $\az\in(-\fz, -\frac{n}q]$.
In this case, by (i), we conclude that
$$JNQ^{-n/q}_{p,q}(\rn)\subset JNQ^\az_{p,q}(\rn).$$
Conversely, let $f\in  JNQ^\az_{p,q}(\rn)$ and
$B$ be a ball of $\rn$ with radius $r_B\in(0,\fz)$.
Observe that, for any $x$, $y\in B$,
\begin{align*}
&\lf|\lf\{z\in B:\ \min\{|x-z|,\,|y-z|\}>2^{-1} r_B\r\} \r|\\
&\quad=\lf|B\setminus\lf[B(x,2^{-1} r_B)\cup B(y,2^{-1} r_B)\r] \r|
\ge\frac12|B|.
\end{align*}
From this and $-q\az-n\ge0$, we deduce that
\begin{align*}
&\int_B \min\lf\{|x-z|^{-q\az-n},\,|y-z|^{-q\az-n}\r\}\,dz\\
&\quad=\int_B \lf[\min\lf\{|x-z|,\,|y-z|\r\}\r]^{-q\az-n}\,dz\\
&\quad\ge \int_{\{z\in B:\ \min\{|x-z|,\,|y-z|\}>2^{-1} r_B\}}
 \lf[\min\lf\{|x-z|,|y-z|\r\}\r]^{-q\az-n}\,dz\\
&\quad\gtrsim |B|^{-\frac{q\az}n-1}\lf|\lf\{z\in B:\
\min\lf\{|x-z|,\,|y-z|\r\}>2^{-1} r_B\r\} \r|
\gtrsim |B|^{-\frac{q\az}n}
\end{align*}
and hence
\begin{align}\label{az<-1/2}
&\fint_B\fint_B|f(x)-f(y)|^q\,dx\,dy \noz\\
&\quad\ls |B|^{\frac{q\az}n-2}\int_B\int_B\int_B
 \min\lf\{|x-z|^{-q\az-n},\,|y-z|^{-q\az-n}\r\}
|f(x)-f(y)|^q\,dx\,dy\,dz\noz\\
&\quad\ls |B|^{\frac{q\az}n-2}\int_B\int_B\int_B
 \min\lf\{|x-z|^{-q\az-n},\,|y-z|^{-q\az-n}\r\}\noz\\
&\qquad\times\lf[|f(x)-f(z)|^q+|f(y)-f(z)|^q\r]\,dx\,dy\,dz\noz\\
&\quad\ls |B|^{\frac{q\az}n-1}
\int_B\int_B\frac{|f(x)-f(z)|^q}{|x-z|^{n+q\az}}\,dx\,dz
+|B|^{\frac{q\az}n-1}
\int_B\int_B\frac{|f(y)-f(z)|^q}{|y-z|^{n+q\az}}\,dy\,dz\noz\\
&\quad\sim |B|^{\frac{q\az}n-1}
\int_B\int_B\frac{|f(x)-f(y)|^q}{|x-y|^{n+q\az}}\,dx\,dy.
\end{align}
Then \eqref{az<-1/2} implies that
$\|f\|_{JNQ^{-n/q}_{p,q}(\rn)}\ls\|f\|_{ JNQ^\az_{p,q}(\rn)}$
and hence $JNQ^\az_{p,q}(\rn)\subset JNQ^{-n/q}_{p,q}(\rn)$.
This shows that $JNQ^\az_{p,q}(\rn)= JNQ^{-n/q}_{p,q}(\rn)$
when $\az\in(-\fz, -\frac{n}q]$.

\emph{Case} 2) $\az\in [-\frac{n}q,0)$.
In this case, by (i), we conclude that
$$JNQ^{-n/q}_{p,q}(\rn)\supset JNQ^\az_{p,q}(\rn).$$
Conversely, let $z\in\rn$, $r\in(0,\fz)$, and $f\in JNQ^{-n/q}_{p,q}(\rn)$.
By the Tonelli theorem, we have
\begin{align*}
&\int_{B(z,r)}\int_{B(z,r)}\frac{|f(x)-f(y)|^q}{|x-y|^{n+q\az}}\,dx\,dy\\
&\quad=\int_{B(z,r)}\int_{B(z-y,r)}\frac{|f(x+y)-f(y)|^q}{|x|^{n+q\az}}\,dx\,dy\\
&\quad\le\int_{B(z,r)}\int_{B(\mathbf{0},2r)}
\frac{|f(x+y)-f(y)|^q}{|x|^{n+q\az}}\,dx\,dy\\
&\quad=\int_{B(\mathbf{0},2r)}\int_{B(z,r)}
\frac{|f(x+y)-f(y)|^q}{|x|^{n+q\az}}\,dy\,dx\\
&\quad\ls \int_{B(\mathbf{0},2r)}\frac1{|x|^{n+q\az}}
\int_{B(z,r)}|f(x+y)-f_{B(z,3r)}|^q\,dy\,dx\\
&\qquad +\int_{B(\mathbf{0},2r)}\frac1{|x|^{n+q\az}}\,dx
\int_{B(z,r)}|f_{B(z,3r)}-f(y)|^q\,dy\\
&\quad\ls\int_{B(\mathbf{0},2r)}\frac1{|x|^{n+q\az}}\,dx
\int_{B(z,3r)}|f(y)-f_{B(z,3r)}|^q\,dy\\
&\quad\sim r^{-q\az}\int_{B(z,3r)}|f(y)-f_{B(z,3r)}|^q\,dy,
\end{align*}
where the implicit positive constants are independent of $r$.
This, together with Corollary \ref{coro-IntJNQ}, shows that
\begin{align*}
\|f\|_{ JNQ^\az_{p,q}(\rn)}
&\sim
\sup_{r\in(0,\infty)}\lf[\int_{\rn}\lf[r^{q\az-n}
\int_{B(z,r)}\int_{B(z,r)}\frac{|f(x)-f(y)|^q}{|x-y|^{n+q\az}}\,dx\,dy \r]^{\frac{p}{q}}\,dz\r]^{\frac{1}{p}}\\
&\ls \sup_{r\in(0,\infty)}\lf[\int_{\rn}\lf[
\fint_{B(z,3r)}|f(y)-f_{B(z,3r)}|^q\,dy \r]^{\frac{p}{q}}\,dz\r]^{\frac{1}{p}}
\sim  \|f\|_{JNQ^{-n/q}_{p,q}(\rn)},
\end{align*}
which further implies that $JNQ^\az_{p,q}(\rn)= JNQ^{-n/q}_{p,q}(\rn)$
when $\az\in [-\frac{n}q,0)$.

Combining Cases 1) and 2), we obtain (iii).
This finishes the proof of Theorem \ref{thm-JNQ123}.
\end{proof}

As an application of Theorem \ref{thm-JNQ123},
we can prove the triviality of both $JN^{\rm con}_{p,q}(\rn)$
and $JN_{p,q}(\rn)$ for any $1\le p<q<\fz$,
the latter of which was introduced by Tao et al. \cite{tyyNA}
(and also by Dafni et al. \cite{dhky18} on cubes)
as an generalization of the John--Nirenberg space $JN_p$.
Recall that the {\em space $JN_{p,q}(\rn)$} is defined to be
the set of all $f\in L^1_\loc(\rn)$ such that
$$\|f\|_{JN_{p,q}(\rn)}:=\sup
\lf\{\sum_{j}\lf|Q_{j}\r|\lf[\fint_{Q_{j}}
\lf|f(x)-f_{Q_j}\r|^{q}\,dx\r]^{\frac{p}{q}}\r\}^{\frac{1}{p}}<\fz,$$
where the supremum is taken over all collections $\{Q_j\}_j$ of
subcubes of $\rn$ with pairwise disjoint interiors
($\{Q_j\}_j$ may have different edge lengths).

\begin{corollary}\label{coro-JN=c}
Let $1\le p<q<\fz$.
Then $JN^{\rm con}_{p,q}(\rn)=\{0\}=JN_{p,q}(\rn)$.
\end{corollary}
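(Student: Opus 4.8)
The plan is to deduce both triviality statements from the two parts of Theorem \ref{thm-JNQ123} already at our disposal, namely the triviality for large $\az$ in (ii) and the identification $JNQ^\az_{p,q}(\rn)=JN^{\rm con}_{p,q}(\rn)$ for every negative $\az$ in (iii). The crucial point is that (iii) gives us complete freedom in the choice of the negative parameter $\az$, and we want to exploit this freedom to push $\az$ up toward $0$ so that it simultaneously satisfies the threshold condition $\az>n(\frac1q-\frac1p)$ required by (ii). This is exactly where the hypothesis $p<q$ enters.

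First I would record the elementary numerical fact that, since $1\le p<q<\fz$, we have $\frac1q-\frac1p<0$ and hence $n(\frac1q-\frac1p)<0$. Consequently the interval $(n(\frac1q-\frac1p),\,0)$ is nonempty, and I fix any $\az$ in it. Because $\az<0$, Theorem \ref{thm-JNQ123}(iii) yields $JN^{\rm con}_{p,q}(\rn)=JNQ^\az_{p,q}(\rn)$ with equivalent norms; on the other hand, because $\az>n(\frac1q-\frac1p)$, Theorem \ref{thm-JNQ123}(ii) gives $JNQ^\az_{p,q}(\rn)=\{0\}$. Combining these two identities forces $JN^{\rm con}_{p,q}(\rn)=\{0\}$.

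For $JN_{p,q}(\rn)$ I would then observe that the supremum defining $\|\cdot\|_{JN_{p,q}(\rn)}$ is taken over all collections $\{Q_j\}_j$ of subcubes with pairwise disjoint interiors, whereas that defining $\|\cdot\|_{JN^{\rm con}_{p,q}(\rn)}$ ranges only over the subfamily $\bigcup_{\ell\in(0,\fz)}\Pi_\ell$ of congruent collections; since every congruent collection has pairwise disjoint interiors, the former supremum runs over a strictly larger family, so $\|f\|_{JN^{\rm con}_{p,q}(\rn)}\le\|f\|_{JN_{p,q}(\rn)}$ for every measurable $f$. This yields the inclusion $JN_{p,q}(\rn)\subset JN^{\rm con}_{p,q}(\rn)=\{0\}$, and since constants trivially belong to $JN_{p,q}(\rn)$, we conclude $JN_{p,q}(\rn)=\{0\}$. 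I do not anticipate any genuine obstacle here: once (ii) and (iii) are granted, the argument is a short combination, and the only real insight is noticing that $p<q$ is precisely what makes the triviality threshold of (ii) negative, so that the flexibility in (iii) can reach it.
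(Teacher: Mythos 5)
Your argument is correct and is essentially identical to the paper's own proof: both exploit that $p<q$ makes $n(\frac1q-\frac1p)<0$, so the interval $(n(\frac1q-\frac1p),0)$ is nonempty, and then combine parts (ii) and (iii) of Theorem \ref{thm-JNQ123} to get $JN^{\rm con}_{p,q}(\rn)=\{0\}$, finishing with the trivial inclusion $JN_{p,q}(\rn)\subset JN^{\rm con}_{p,q}(\rn)$ coming from the larger supremum. No gaps.
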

\begin{proof}
Since $1\le p<q<\fz$,
it follows that $(-\fz,0)\cap(n(\frac{1}{q}-\frac{1}{p}),\fz)\neq\emptyset$.
By this and (ii) and (iii) of Theorem \ref{thm-JNQ123},
we conclude that $JN^{\rm con}_{p,q}(\rn)=\{0\}$.
From this and the observation that
$$JN_{p,q}(\rn)\subset JN^{\rm con}_{p,q}(\rn)
\quad{\rm and}\quad
\|\cdot\|_{JN^{\rm con}_{p,q}(\rn)}
\le\|\cdot\|_{JN_{p,q}(\rn)}$$
for any $p,\ q\in[1,\fz)$,
we deduce that $JN_{p,q}(\rn)=\{0\}$ as well,
which completes the proof of Corollary \ref{coro-JN=c}.
\end{proof}
\begin{remark}
Corollary \ref{coro-JN=c} partially answers the open question
posed in \cite[Remark 4.2(ii)]{tyyNA}
(see also \cite[Question 1(ii)]{tyyM}) for the case $q>p$,
but it is still \emph{unclear} for the case $q=p$.
\end{remark}

\subsection{Relations with $Q$ Spaces}\label{subsec-Q}

Also, the  $JNQ$ space is closely connected with the $Q$ space.
Indeed, the following proposition shows that $Q_\az(\rn)$ serves as
a limit space of $JNQ^{\az}_{p,2}(\rn)$ when $p\to\fz$,
and hence it is reasonable to define $JNQ^{\az}_{\fz,2}(\rn):=Q_\az(\rn)$
in Remark \ref{rem-moC}(ii).

\begin{proposition}\label{prop-limit}
Let $p\in[1,\fz)$ and $\az\in\rr$. Then,
for any $f\in \bigcup_{r\in [1,\fz)}\bigcap_{p\in[r,\fz)}JNQ^{\az}_{p,2}(\rn)$,
it holds true that
$$\lim_{p\to\fz}\|f\|_{JNQ^{\az}_{p,2}(\rn)}=\|f\|_{Q_\az(\rn)}.$$
\end{proposition}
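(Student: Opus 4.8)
The plan is to establish the two one-sided bounds
$$\liminf_{p\to\fz}\|f\|_{JNQ^{\az}_{p,2}(\rn)}\ge\|f\|_{Q_\az(\rn)}
\quad\text{and}\quad
\limsup_{p\to\fz}\|f\|_{JNQ^{\az}_{p,2}(\rn)}\le\|f\|_{Q_\az(\rn)},$$
which together give the claimed limit. Throughout I would abbreviate $\Phi:=\Phi_{f,2,\az}$ as in \eqref{Phifqa}, and exploit that the hypothesis $f\in\bigcup_{r\in[1,\fz)}\bigcap_{p\in[r,\fz)}JNQ^{\az}_{p,2}(\rn)$ supplies a fixed $r\in[1,\fz)$ with $f\in JNQ^{\az}_{p,2}(\rn)$ for all $p\ge r$ and, in particular, $\|f\|_{JNQ^{\az}_{r,2}(\rn)}<\fz$.

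For the lower bound, I would test the defining supremum of $\|\cdot\|_{JNQ^{\az}_{p,2}(\rn)}$ against the single-cube collection $\{Q\}\in\Pi_{\ell(Q)}$, which yields $\|f\|_{JNQ^{\az}_{p,2}(\rn)}\ge|Q|^{1/p}\Phi(Q)$ for every cube $Q$ and every $p$. Since $|Q|\in(0,\fz)$, the factor $|Q|^{1/p}$ tends to $1$ as $p\to\fz$, so $\liminf_{p\to\fz}\|f\|_{JNQ^{\az}_{p,2}(\rn)}\ge\Phi(Q)$; taking the supremum over all cubes $Q$ produces exactly $\|f\|_{Q_\az(\rn)}$, even when this quantity is infinite.

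The main obstacle is the upper bound, because a collection $\{Q_j\}_j\in\Pi_\ell$ may tile $\rn$, so $\sum_j|Q_j|$ is typically infinite and one cannot simply bound each $\Phi(Q_j)$ by $\|f\|_{Q_\az(\rn)}$ and factor the sum out. The device I would use is to peel off a fixed power: for $p\ge r$, since $\Phi(Q_j)\le\sup_{\text{cube }Q}\Phi(Q)=\|f\|_{Q_\az(\rn)}$ for every $j$,
\begin{align*}
\sum_j|Q_j|\lf[\Phi(Q_j)\r]^p
&=\sum_j|Q_j|\lf[\Phi(Q_j)\r]^{p-r}\lf[\Phi(Q_j)\r]^r\\
&\le\|f\|_{Q_\az(\rn)}^{p-r}\sum_j|Q_j|\lf[\Phi(Q_j)\r]^r
\le\|f\|_{Q_\az(\rn)}^{p-r}\|f\|_{JNQ^{\az}_{r,2}(\rn)}^r.
\end{align*}
Taking the supremum over $\ell\in(0,\fz)$ and $\{Q_j\}_j\in\Pi_\ell$ (the class $\Pi_\ell$ being the same for every exponent) and then the $p$-th root gives $\|f\|_{JNQ^{\az}_{p,2}(\rn)}\le\|f\|_{Q_\az(\rn)}^{(p-r)/p}\|f\|_{JNQ^{\az}_{r,2}(\rn)}^{r/p}$. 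When $\|f\|_{Q_\az(\rn)}<\fz$ this estimate is effective, and letting $p\to\fz$ with $(p-r)/p\to1$, $r/p\to0$, and $\|f\|_{JNQ^{\az}_{r,2}(\rn)}<\fz$ yields $\limsup_{p\to\fz}\|f\|_{JNQ^{\az}_{p,2}(\rn)}\le\|f\|_{Q_\az(\rn)}$; when $\|f\|_{Q_\az(\rn)}=\fz$ the lower bound already forces the limit to be $\fz$, so no upper bound is needed. Combining the two cases completes the proof.
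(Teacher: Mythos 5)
Your proposal is correct and follows essentially the same route as the paper: the lower bound via single-cube collections and $|Q|^{1/p}\to1$, and the upper bound by reducing the exponent from $p$ to a fixed $r$ using $\Phi(Q_j)\le\|f\|_{Q_\az(\rn)}$ together with the finiteness of $\|f\|_{JNQ^{\az}_{r,2}(\rn)}$. The only cosmetic difference is that the paper normalizes $\|f\|_{Q_\az(\rn)}=1$ before peeling off the power, whereas you carry the factor $\|f\|_{Q_\az(\rn)}^{(p-r)/p}$ explicitly; the two are interchangeable.
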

\begin{proof}
Let $p\in[1,\fz),\ \az\in\rr$, and $f$ be a measurable function on $\rn$.
Then, for any given cube $Q$ of $\rn$, by Definition \ref{def-JNQpqa},
we have
\begin{align*}
\|f\|_{JNQ^{\az}_{p,2}(\rn)}
&\geq|{Q}|^{\frac{1}{p}}
\lf[|{Q}|^{\frac{2\az}n-1}\int_{{Q}}\int_{{Q}}
\frac{|f(x)-f(y)|^2}{|x-y|^{n+2\az}}\,dy\,dx \r]^\frac{1}2.
\end{align*}
Since $|Q|^{\frac{1}{p}}\to 1$ as $p\to\fz$, it follows that
\begin{align*}
\liminf_{p\to\fz}\|f\|_{JNQ^{\az}_{p,2}(\rn)}
\geq
\lf[|{Q}|^{\frac{2\az}n-1}\int_{{Q}}\int_{{Q}}
\frac{|f(x)-f(y)|^2}{|x-y|^{n+2\az}}\,dy\,dx \r]^\frac{1}2.
\end{align*}
Then, by the arbitrariness of $Q$, we find that
$$\liminf_{p\to\fz}\|f\|_{JNQ^{\az}_{p,2}(\rn)}
\geq \|f\|_{Q_\az(\rn)}.$$
On the other hand, let
$f\in \bigcup_{r\in [1,\fz)}\bigcap_{p\in[r,\fz)}JNQ^{\az}_{p,2}(\rn)$.
Then there exists an $r_0\in [1,\fz)$ such that
$f\in JNQ^{\az}_{p,2}(\rn)$ for any $p\in[r_0,\fz)$.
We claim that
\begin{align}\label{sup}
\limsup_{p\to\fz}\|f\|_{JNQ^{\az}_{p,2}(\rn)}\leq \|f\|_{Q_\az(\rn)}.
\end{align}
Indeed, if $\|f\|_{Q_\az(\rn)}=\fz$, then \eqref{sup} trivially holds true.
If $\|f\|_{Q_\az(\rn)}<\fz$, then we may assume, without loss of generality,
that $\|f\|_{Q_\az(\rn)}=1$. Thus, for any cube $Q$ of $\rn$,
\begin{align*}
|Q|^{\frac{2\az}n-1}\int_{Q}\int_{Q}\frac{|f(x)-f(y)|^2}{|x-y|^{n+2\az}}\,dy\,dx
\leq\|f\|_{Q_\az(\rn)}=1,
\end{align*}
and hence, for any $p\in[r_0,\fz)$, we have
\begin{align*}
\|f\|^{p}_{JNQ^{\az}_{p,2}(\rn)}
&=\sup_{\gfz{\ell\in(0,\fz)}{\{Q_j \}_{j}\in\Pi_\ell}}
\lf\{\sum_{j}\lf|Q_{j}\r|
\lf[|Q_j|^{\frac{2\az}n-1}\int_{Q_j}\int_{Q_j}
\frac{|f(x)-f(y)|^2}{|x-y|^{n+2\az}}\,dy\,dx \r]^\frac{p}2 \r\}\\
&\leq \sup_{\gfz{\ell\in(0,\fz)}{\{Q_j \}_{j}\in\Pi_\ell}}
\lf\{\sum_{j}\lf|Q_{j}\r|
\lf[|Q_j|^{\frac{2\az}n-1}\int_{Q_j}\int_{Q_j}
\frac{|f(x)-f(y)|^2}{|x-y|^{n+2\az}}\,dy\,dx \r]^\frac{r_0}2 \r\}\\
&=\|f\|^{r_0}_{JNQ^{\az}_{{r_0},2}(\rn)}<\fz.
\end{align*}
Letting $p\to\fz$, we obtain
\begin{align*}
\limsup_{p\to\fz}\|f\|_{JNQ^{\az}_{p,2}(\rn)}\leq 1= \|f\|_{Q_\az(\rn)}.
\end{align*}
This finishes the proof of \eqref{sup} and hence of Proposition \ref{prop-limit}.
\end{proof}

Next, we show some extension properties of $JNQ$ spaces.
Recall that we can extend a measurable function $f$
via a fundamental invariance principle: for any $(x,t)\in \rr^{n+1}$,
\begin{align}\label{Fxt}
	F(x,t):=f(x).
\end{align}
It is easy to show that $\|F\|_{L^\fz(\rr^{n+1})}=\|f\|_{L^\fz(\rn)}$.
Moreover, by \cite[Theorem 2.6]{ejpx00}, we also have
$$\|F\|_{Q_\az(\rr^{n+1})}\sim\|f\|_{Q_\az(\rn)}$$
for any $\az\in\rr$,
where the implicit equivalence constants are independent of $f$ and $F$.
Correspondingly, as in \cite[Proposition 4.1]{dhky18}
and \cite[Lemma 2.18]{jtyyz21},
we next show that such extension
also holds true for the $JNQ$ space on cubes.
In what follows, for any $p,\ q\in[1,\fz)$, any $\az\in\rr$, any cube $Q_0\subset\rn$,
and any measurable function $f$ on $Q_0$, let
\begin{align*}
\|f\|_{JNQ^{\az}_{p,q}(Q_0)}
:=\sup\lf\{\sum_j |Q_j|\lf[\Phi_{f,q,\az}(Q_j)\r]^p\r\}^\frac1p,
\end{align*}
where $\Phi_{f,q,\az}(Q_j)$ is as in \eqref{Phifqa} with
$Q$ replaced by $Q_j$,
and the supremum is taken over all collections $\{Q_j\}_j$ of subcubes of
$Q_0$ with pairwise disjoint interiors.
Moreover, the \emph{space $JNQ^{\az}_{p,q}(Q_0)$} is defined by setting
$$JNQ^{\az}_{p,q}(Q_0):=\lf\{f {\rm\ is\ measurable\ on\ }Q_0:\
\|f\|_{JNQ^{\az}_{p,q}(Q_0)}<\fz \r\}.$$

\begin{proposition}\label{prop-exten}
Let $p$, $q\in[1,\fz)$, $\az\in\rr$, $Q_0$ be a cube of $\rn$
with edge length $\ell_0$, and $\widetilde{Q}_0:=Q_0\times[t_0,t_0+\ell_0]$
for any given $t_0\in\rr$.
Let $f$ be a measurable function on $Q_0$, and
$F(x,t):=f(x)$ for any $(x,t)\in \widetilde{Q}_0$ .
Then $F$ is a measurable function on $\widetilde{Q}_0$, and
\begin{align}\label{Ff}
F\in JNQ^{\az}_{p,q}(\widetilde{Q}_0)
\Longleftrightarrow
f\in JNQ^{\az}_{p,q}(Q_0).
\end{align}
Moreover,
$$\|F\|_{JNQ^{\az}_{p,q}(\widetilde{Q}_0)}
\sim
\ell_0^{1/p}\|f\|_{JNQ^{\az}_{p,q}(Q_0)}$$
with the positive equivalence constants independent of
$f$, $F$, and $Q_0$.
\end{proposition}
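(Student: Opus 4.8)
The proposition is an extension result: a function $f$ on a cube $Q_0\subset\rn$ lies in $JNQ^\az_{p,q}(Q_0)$ if and only if its cylindrical lift $F(x,t):=f(x)$ lies in $JNQ^\az_{p,q}(\widetilde Q_0)$ with $\widetilde Q_0=Q_0\times[t_0,t_0+\ell_0]\subset\rnn$, and the norms satisfy $\|F\|_{JNQ^\az_{p,q}(\widetilde Q_0)}\sim \ell_0^{1/p}\|f\|_{JNQ^\az_{p,q}(Q_0)}$. My strategy is to compare the defining suprema directly, using the product structure of $F$ to reduce the $(n{+}1)$-dimensional double integral $\Phi_{F,q,\az}$ to the $n$-dimensional one $\Phi_{f,q,\az}$. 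The measurability of $F$ is immediate from that of $f$, so the content is entirely in the two-sided norm comparison.

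First I would prove the direction $f\in JNQ^\az_{p,q}(Q_0)\Rightarrow F\in JNQ^\az_{p,q}(\widetilde Q_0)$ together with the bound $\|F\|\ls\ell_0^{1/p}\|f\|$. Take any admissible collection $\{\widetilde Q_j\}_j$ of subcubes of $\widetilde Q_0$ with pairwise disjoint interiors, say $\widetilde Q_j$ has edge length $r_j$ and base cube $Q_j'$ (the projection onto $\rn$) lying in $Q_0$. Writing points as $(x,s)$, $(y,t)$ with $x,y\in Q_j'$ and $s,t\in$ the $t$-interval of $\widetilde Q_j$, I would estimate
\begin{align*}
\int_{\widetilde Q_j}\int_{\widetilde Q_j}
\frac{|F(x,s)-F(y,t)|^q}{|(x,s)-(y,t)|^{n+1+q\az}}\,d(x,s)\,d(y,t)
=\int_{\widetilde Q_j}\int_{\widetilde Q_j}
\frac{|f(x)-f(y)|^q}{(|x-y|^2+|s-t|^2)^{\frac{n+1+q\az}2}}\,d(x,s)\,d(y,t).
\end{align*}
Bounding $(|x-y|^2+|s-t|^2)^{1/2}\ge|x-y|$ in the denominator (valid since $n+1+q\az>0$ on the non-trivial range, which by Theorem \ref{thm-JNQ123}(ii) is all that matters) and carrying out the two $t$-integrals, each over an interval of length $r_j\le\ell_0$, contributes a factor $\ls r_j^2$; then $|\widetilde Q_j|=r_j\,|Q_j'|$ and $r_j^{(n+1)(\frac{q\az}{n+1}-1)}\cdot r_j^2$ versus $|Q_j'|^{\frac{q\az}{n}-1}$ must be matched by careful bookkeeping of the powers of $r_j$. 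The upshot is $|\widetilde Q_j|[\Phi_{F,q,\az}(\widetilde Q_j)]^p\ls \ell_0\,|Q_j'|[\Phi_{f,q,\az}(Q_j')]^p$; summing over $j$ and taking the supremum gives $\|F\|_{JNQ^\az_{p,q}(\widetilde Q_0)}\ls\ell_0^{1/p}\|f\|_{JNQ^\az_{p,q}(Q_0)}$, since $\{Q_j'\}_j$ is an admissible family in $Q_0$ (up to controlling overlap of projections, which is the first technical point).

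For the converse $F\Rightarrow f$ with $\|F\|\gs\ell_0^{1/p}\|f\|$, I would start from an arbitrary admissible family $\{Q_j\}_j$ of subcubes of $Q_0$, lift each to the genuine cube $\widehat Q_j:=Q_j\times[t_0,t_0+\ell(Q_j)]\subset\widetilde Q_0$ (or suitably stacked copies), and reverse the computation, now using the lower bound $(|x-y|^2+|s-t|^2)^{1/2}\le\sqrt2\,\ell(Q_j)$ on the denominator to produce a matching lower estimate for $\Phi_{F,q,\az}(\widehat Q_j)$ in terms of $\Phi_{f,q,\az}(Q_j)$. This recovers $\|F\|\gs\ell_0^{1/p}\|f\|$. \textbf{The main obstacle} I anticipate is the bookkeeping at the boundary between the two regimes: on the lift side the integral is genuinely $(n{+}1)$-dimensional and the inner $t$-integration must be shown to produce \emph{comparable} upper and lower bounds of order $r_j$ (not merely an upper bound), so that no power of $\ell_0$ or $r_j$ is lost and the two inequalities close up to the single clean factor $\ell_0^{1/p}$. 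Controlling how an arbitrary disjoint family of $(n{+}1)$-cubes projects down to a still-admissible (bounded-overlap) family of $n$-cubes — and conversely lifting an $n$-family to a disjoint $(n{+}1)$-family filling the $t$-direction — is exactly the geometric packing step; I would handle it, as in \cite[Lemma 2.18]{jtyyz21} and \cite[Proposition 4.1]{dhky18}, by slicing $\widetilde Q_0$ into $\sim\ell_0/r_j$ horizontal slabs of height $r_j$ and noting that the fibers of the projection over each base cube have controlled multiplicity.
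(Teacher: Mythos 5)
Your overall architecture matches the paper's: reduce the $(n{+}1)$-dimensional kernel to the $n$-dimensional one by integrating out the vertical variables, slice $\widetilde{Q}_0$ horizontally for the forward direction, and stack lifted copies of the $Q_j$ for the converse. But the central analytic step is missing, and the substitutes you propose for it do not close. For the forward direction you bound the denominator by $|(x,s)-(y,t)|\ge|x-y|$ and integrate out $s,t$ trivially, obtaining $r_j^2\,|x-y|^{-(n+1+q\az)}$. What you need is $r_j\,|x-y|^{-(n+q\az)}$, and the ratio of the two is $r_j/|x-y|$, which blows up near the diagonal; the resulting integral can even be infinite while $\Phi_{f,q,\az}(Q_j')$ is finite. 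For the converse you bound the denominator by its maximum $\sim\ell(Q_j)^{n+1+q\az}$, which replaces the singular kernel by a constant and only yields a mean-oscillation-type lower bound --- strictly weaker than the $\Phi_{f,q,\az}(Q_j)$ lower bound you need (for $n+q\az>0$ the constant-kernel quantity is \emph{below} the singular-kernel quantity, so the inequality points the wrong way). You correctly identify "comparable upper and lower bounds of order $r_j$ from the inner $t$-integration" as the main obstacle, but that is precisely the content of the paper's claim \eqref{tu}:
\begin{align*}
\int_{t_0}^{t_0+\ell}\int_{t_0}^{t_0+\ell}
\frac{dt\,du}{|(x-y,t-u)|^{n+1+q\az}}
\sim \ell\,|x-y|^{-n-q\az}\qquad\text{for } |x-y|\le\ell,
\end{align*}
proved by splitting the $t,u$ integration into $|t-u|<|x-y|$ (which gives the lower bound) and $|t-u|\ge|x-y|$ (where the tail converges). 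You never carry out this computation, and your sketched bounds are not a valid replacement for it.

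Two further points. First, \eqref{tu} requires $\az>-n/q$ (otherwise the tail integral $\int_{|x-y|}^{\fz}s^{-(n+1+q\az)}\,ds$ diverges); the paper disposes of the remaining range by invoking Theorem \ref{thm-JNQ123}(iii) to reduce to $\az\in(-\frac nq,\fz)$. Your parenthetical appeal to Theorem \ref{thm-JNQ123}(ii) addresses large $\az$, not very negative $\az$, so this case is unhandled in your write-up. Second, your worry about "controlling overlap of projections" is resolved more cleanly than by slab decomposition: writing $|\widetilde{Q}_i|=|Q_i|\int_0^{\ell_0}\mathbf{1}_{I_i}(t)\,dt$ and applying Fubini, one observes that for each \emph{fixed} height $t$ the cubes $\{Q_i:\ I_i\ni t\}$ already have pairwise disjoint interiors, so each slice is an admissible family in $Q_0$ and no multiplicity argument is needed.
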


\begin{proof}
Let all the symbols be as in the present proposition.
We claim that, if $\az>-\frac nq$, then,
for any $x,$ $y\in\rn$ with $|x-y|\le\ell$ for some $\ell\in(0,\fz)$,
\begin{align}\label{tu}
\int_{t_0}^{t_0+\ell}\int_{t_0}^{t_0+\ell}
\frac{dt\,du}{|(x-y,t-u)|^{n+1+q\az}}
\sim \ell|x-y|^{-n-q\az}.
\end{align}
Indeed, on one hand, by $\az>-\frac nq$, we have
\begin{align}\label{tuLHS}
&\int_{t_0}^{t_0+\ell}\int_{t_0}^{t_0+\ell}
\frac{dt\,du}{|(x-y,t-u)|^{n+1+q\az}}\noz\\
&\quad\sim\int_{t_0}^{t_0+\ell}\int_{t_0}^{t_0+\ell}
\frac{dt\,du}{|x-y|^{n+1+q\az}+|t-u|^{n+1+q\az}}
\ls \ell\int_{\rr}\frac{ds}{|x-y|^{n+1+q\az}+|s|^{n+1+q\az}}\noz\\
&\quad\ls \ell\int_{0}^{\fz}\frac{ds}{(|x-y|+s)^{n+1+q\az}}
\sim\ell\int_{|x-y|}^{\fz}\frac{ds}{s^{n+1+q\az}}
\sim \ell |x-y|^{-n-q\az}.
\end{align}
On the other hand, from $|x-y|\le\ell$, it follows that
\begin{align}\label{tuRHS}
&\int_{t_0}^{t_0+\ell}\int_{t_0}^{t_0+\ell}
\frac{dt\,du}{|(x-y,t-u)|^{n+1+q\az}}\noz\\
&\quad\sim\int_{t_0}^{t_0+\ell}\int_{t_0}^{t_0+\ell}
\frac{dt\,du}{|x-y|^{n+1+q\az}+|t-u|^{n+1+q\az}}
\gtrsim\int_{t_0}^{t_0+\ell}\int_{t_0}^{t_0+\ell}
\frac{\mathbf{1}_{\{|t-u|<|x-y|\}}(t,u)\,dt\,du}
{|x-y|^{n+1+q\az}+|t-u|^{n+1+q\az}}\noz\\
&\quad\gtrsim\frac{\ell|x-y|}{|x-y|^{n+1+q\az}}
\sim\ell |x-y|^{-n-q\az}.
\end{align}
Combining \eqref{tuLHS} and \eqref{tuRHS},
we obtain \eqref{tu} and hence the above claim holds true.

Now, we show \eqref{Ff}.
By Theorem \ref{thm-JNQ123}(iii),
we only need to prove \eqref{Ff} with $\az\in(-\frac nq,\fz)$.
Let $\{\widetilde{Q}_i\}_i:=\{Q_{i}\times I_{i}\}_i$
be any given collection of subcubes
of $\widetilde{Q}_{0}$ with pairwise disjoint interiors
and equal edge length $\ell_1$.
Then, from the Tonelli theorem and \eqref{tu}, it follows that
\begin{align}\label{f-F}
&\sum_i |\widetilde{Q}_i|
\lf[\ell_1^{q\az-(n+1)}
\int_{\widetilde{Q}_i}\int_{\widetilde{Q}_i}
\frac{|F(x,t)-F(y,u)|^q}{|(x,t)-(y,u)|^{n+1+q\az}}
\,dx\,dt\,dy\,du\r]^p\notag\\
&\quad=
\sum_i |\widetilde{Q}_i|
\lf[\ell_1^{q\az-(n+1)}
\int_{\widetilde{Q}_i}\int_{\widetilde{Q}_i}
\frac{|f(x)-f(y)|^q}{|(x,t)-(y,u)|^{n+1+q\az}}
\,dx\,dt\,dy\,du\r]^p\notag\\
&\quad\sim
\sum_i |{Q}_i|\ell_1
\lf[\ell_1^{q\az-n}
\int_{{Q}_i}\int_{{Q}_i}
\frac{|f(x)-f(y)|^q}{|x-y|^{n+q\az}}
\,dx\,dy\r]^p\notag\\
&\quad\sim
\sum_i |{Q}_i|
\int_{0}^{\ell_0}{\mathbf{1}}_{I_i}(t)\,dt
\lf[\ell_1^{q\az-n}
\int_{{Q}_i}\int_{{Q}_i}
\frac{|f(x)-f(y)|^q}{|x-y|^{n+q\az}}
\,dx\,dy\r]^p\notag\\
&\quad\sim
\int_{0}^{\ell_0}
\sum_{\{i:\ I_i\ni t\}} |{Q}_i|
\lf[\ell_1^{q\az-n}
\int_{{Q}_i}\int_{{Q}_i}
\frac{|f(x)-f(y)|^q}{|x-y|^{n+q\az}}
\,dx\,dy\r]^p
\,dt.
\end{align}
Notice that, for any given $t\in(0,\ell_0)$,
$\{Q_i:\ I_i\ni t\}$ is a collection of subcubes of $Q_0$
with pairwise disjoint interiors and equal edge length $\ell_1$,
which implies that
\begin{align*}
\sum_{\{i:\ I_i\ni t\}} |{Q}_i|
\lf[\ell_1^{q\az-n}
\int_{{Q}_i}\int_{{Q}_i}
\frac{|f(x)-f(y)|^q}{|x-y|^{n+q\az}}
\,dx\,dy\r]^p
\leq
\|f\|^{p}_{JNQ^{\az}_{p,q}(Q_0)}.
\end{align*}
By this and \eqref{f-F}, we obtain
\begin{align*}
&\sum_i |\widetilde{Q}_i|
\lf[\ell_1^{q\az-(n+1)}
\int_{\widetilde{Q}_i}\int_{\widetilde{Q}_i}
\frac{|F(x,t)-F(y,u)|^q}{|(x,t)-(y,u)|^{n+1+q\az}}
\,dx\,dt\,dy\,du\r]^p\noz\\
&\quad\lesssim
\int_{0}^{\ell_0}
\|f\|^{p}_{JNQ^{\az}_{p,q}(Q_0)}
\,dt
\sim\ell_0\|f\|^{p}_{JNQ^{\az}_{p,q}(Q_0)}.
\end{align*}
From this and the the arbitrariness of $\{\widetilde{Q}_i\}_i$,
it follows that
\begin{align}\label{F<f}
\|F\|_{JNQ^{\az}_{p,q}(\widetilde{Q}_0)}
\ls
\ell_0^{1/p}\|f\|_{JNQ^{\az}_{p,q}(Q_0)}.
\end{align}

Conversely, let $\{Q_i\}_{i}$ be any given collection of subcubes
of $Q_0$ with pairwise disjoint interiors and equal edge length $\ell_1$.
Also, for any $j\in\{1,\dots,J\}$ with $J$ being the largest
integer not greater than $\ell_0/\ell_1$, let
$$Q_{i,j}:=Q_j\times[t_0+(j-1)\ell_1,t_0+j\ell_1].$$
Then it is obvious that $\{Q_{i,j}\}_{i,j}$ is a collection of
subcubes of $\widetilde{Q}_0$ with pairwise disjoint interiors
and equal edge length $\ell_1$, and $J\ell_1\ge\ell_0/2$.
By this and \eqref{tu}, we have
\begin{align*}
\|F\|^{p}_{JNQ^{\az}_{p,q}(\widetilde{Q}_0)}
&\geq\sum_{i,j} |Q_{i,j}|
\lf[\ell_1^{q\az-(n+1)}\int_{Q_{i,j}}\int_{Q_{i,j}}
\frac{|F(x,t)-F(y,u)|^q}{|(x,t)-(y,u)|^{n+1+q\az}}
\,dx\,dt\,dy\,dv\r]^p\\
&\sim\sum_{j=1}^J\sum_i|{Q}_{i}|\ell_1
\lf[\ell_1^{q\az-n}
\int_{{Q}_{i}}\int_{{Q}_{i}}
\frac{|f(x)-f(y)|^q}{|x-y|^{n+q\az}}
\,dx\,dy\r]^p\\
&\gtrsim
\frac{\ell_0}{2}
\sum_i|{Q}_{i}|\lf[\ell_1^{q\az-n}
\int_{{Q}_{i}}\int_{{Q}_{i}}
\frac{|f(x)-f(y)|^q}{|x-y|^{n+q\az}}
\,dx\,dy\r]^p.
\end{align*}
From this and the the arbitrariness of $\{Q_i\}_i$,
it follows that
\begin{align*}
\|F\|_{JNQ^{\az}_{p,q}(\widetilde{Q}_0)}
\gtrsim
\ell_0^{1/p}\|f\|_{JNQ^{\az}_{p,q}(Q_0)},
\end{align*}
which, combined with \eqref{F<f},
further shows that
\begin{align*}
\|F\|_{JNQ^{\az}_{p,q}(\widetilde{Q}_0)}
\sim
\ell_0^{1/p}\|f\|_{JNQ^{\az}_{p,q}(Q_0)}.
\end{align*}
This finishes the proof of Proposition \ref{prop-exten}.
\end{proof}

\begin{remark}\label{rem-extRn}
The factor $\ell_0^{1/p}$ in Proposition \ref{prop-exten} indicates that
the invariance principle \eqref{Fxt} is no longer feasible
(unless $p=\fz$)
for the extension from $JNQ^{\az}_{p,q}(\rn)$ to
$JNQ^{\az}_{p,q}(\rr^{n+1})$.
\end{remark}

\subsection{Relations with (Fractional) Sobolev Spaces}\label{subsec-Sob}

Now, we show the relations between $JNQ$ spaces
and (fractional) Sobolev spaces.

Recall that the \emph{fractional Sobolev space}
$W^{s,p}(\rn)$ with $p\in[1,\fz)$ and $s\in\rr$
is defined to be the set of all measurable functions
$f$ on $\rn$ such that
$$
\|f\|_{W^{s,p}(\rn)}:=
\lf[\int_{\rn}\int_{\rn}
\frac{|f(x)-f(y)|^p}{|x-y|^{n+sp}}\,dx\,dy\r]^\frac1p<\fz.
$$
When $s\in(0,1)$, $\|\cdot\|_{W^{s,p}(\rn)}$ is usually called the
\emph{Gagliardo semi-norm}.
It is well known that, for any $s\in[1,\fz)$,
$W^{s,p}(\rn)$ contains only the functions that are almost everywhere constant;
see, for instance, \cite{b02,bvy21} and their references.
Also, the same triviality holds true for $s\in(-\fz,0]$.

\begin{proposition}\label{s=0}
Let $p\in[1,\fz)$ and $s\in(-\fz,0]$.
Then $W^{s,p}(\rn)=\{0\}$.
\end{proposition}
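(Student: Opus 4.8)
\emph{Plan.}
By the convention in Remark \ref{rem-moC}(i) it suffices to prove the contrapositive: if a measurable $f$ is \emph{not} almost everywhere constant, then $\|f\|_{W^{s,p}(\rn)}=\fz$. Two elementary facts, both special to $s\le0$, drive the argument. First, a \emph{restriction bound with a value gap}: since $f$ is not a.e.\ constant, there exist $a<b$ with $|E_a|,|E_b|>0$, where $E_a:=\{f<a\}$ and $E_b:=\{f>b\}$; restricting the defining double integral to $E_b\times E_a$ and using $|f(x)-f(y)|\ge b-a$ there gives
\[
\|f\|_{W^{s,p}(\rn)}^p\ge (b-a)^p\int_{E_b}\int_{E_a}\frac{dx\,dy}{|x-y|^{n+sp}}.
\]
Second, a \emph{weight divergence}: because $n+sp\le n$, one has $\int_{\rn}(1+|y|)^{-(n+sp)}\,dy=\fz$. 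The whole proof reduces to showing that, after possibly re-choosing the thresholds, the double integral on the right is $+\fz$.

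The mechanism I would use to force this divergence combines a Lebesgue density point with the weight divergence. Suppose a \emph{value layer} $F$ of $f$ (a super-level set, a sub-level set, or an exact level $\{f=c\}$) is \emph{fat at infinity}, meaning $\int_F(1+|y|)^{-(n+sp)}\,dy=\fz$, and suppose $F$ is separated by a value gap $\dz>0$ from another layer $G$ with $|G|>0$. Fixing a density point $x_0$ of $G$ and a small ball $B(x_0,\rho)$ with $|G\cap B(x_0,\rho)|>0$, the elementary bound $|x-y|\le C(1+|y|)$ for $x\in B(x_0,\rho)$ (valid when $n+sp\in(0,n]$; the case $n+sp\le0$ is even easier, since then any infinite-measure $F$ works directly) yields
\[
\int_{F}\int_{G\cap B(x_0,\rho)}\frac{dy\,dx}{|x-y|^{n+sp}}\gs |G\cap B(x_0,\rho)|\int_{F}(1+|x|)^{-(n+sp)}\,dx=\fz,
\]
and the gap $\dz$ upgrades this to the full lower bound, so $\|f\|_{W^{s,p}(\rn)}=\fz$.

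The main obstacle is purely the bookkeeping that guarantees such a pair $(F,G)$ always exists. One cannot simply take $F=E_a$ and $G=E_b$: an infinite-measure set can be \emph{thin at infinity} (carry only finite $(1+|y|)^{-(n+sp)}$-mass), and indeed if $E_a,E_b$ are both bounded the displayed integral is finite. The point is that the full space carries infinite weight, so splitting $\rn$ according to the value of $f$ into $\{f<a\}$, $\{a\le f\le b\}$, $\{f>b\}$ forces at least one layer to be fat; if it is a tail ($E_a$ or $E_b$) I pair it with a density ball of the opposite tail (gap $b-a$), while if the fat layer is the middle one, I would bisect its value interval repeatedly, either meeting an exact level $\{f=c_0\}$ that is fat (the nested value intervals shrink to $c_0$ and fatness persists in the limit, since otherwise the weighted measures would converge to a finite value) or splitting off a fat strict level that admits a gap. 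In every case non-constancy supplies a positive-measure layer at value-distance $\dz>0$, completing the pair. The delicate endpoint is $s=0$, where the weight $|h|^{-n}$ is borderline non-integrable and $\int_{\rn}(1+|y|)^{-n}\,dy=\fz$ is used at its critical exponent; this is exactly where the estimate is tightest.
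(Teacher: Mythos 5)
Your route is genuinely different from the paper's (which proves a reverse estimate for the $L^p$-modulus $\omega(f,t)=\sup_{h\in B_t^+}\|\Delta_h f\|_{L^p(\rn)}^p$ in terms of its annular averages $\omega^\ast(f,t)$, deduces from $\sum_j 2^{-jsp}\omega^\ast(f,2^j)<\fz$ that $\omega\equiv 0$, and concludes by reflections that $f$ is constant), and your three main ingredients --- the value-gap restriction, the divergence of $\int_{\rn}(1+|y|)^{-(n+sp)}\,dy$, and the localization $|x-y|\le C(1+|y|)$ for $x$ in a fixed ball --- are all correct. But the bookkeeping step you yourself single out as the main obstacle contains a genuine error. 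The claim that ``fatness persists in the limit, since otherwise the weighted measures would converge to a finite value'' is a false use of continuity from above: a decreasing sequence of sets of \emph{infinite} measure can have a limit of measure zero (compare $[k,\fz)\downarrow\emptyset$). Concretely, take $n=1$, $p=1$, $s=0$, and let $f=0$ on $[-2,-1]$, $f=1$ on $[1,2]$, and $f(y)=\frac12+\frac18(1+|y|)^{-1}$ elsewhere. Both tails $\{f<\frac14\}$ and $\{f>\frac34\}$ are bounded, hence thin at infinity; the middle layer is the fat one; your bisection always keeps the half whose closure contains $\frac12$, the nested intervals shrink to $c_0=\frac12$, yet $\{f=\frac12\}=\emptyset$ is not fat and none of the discarded halves is fat either. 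So neither branch of your dichotomy occurs, and the argument as written does not close.

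The gap is repairable without changing your strategy: what the nested fat intervals actually give is that \emph{every neighbourhood} $\{|f-c_0|<\epsilon\}$ of the limiting value $c_0$ is fat. Non-constancy then supplies a $\dz>0$ with $|\{|f-c_0|\ge\dz\}|>0$, and the pair $F:=\{|f-c_0|<\dz/2\}$ (fat) and $G:=\{|f-c_0|\ge\dz\}$ (positive measure, at value-distance at least $\dz/2$ from $F$) completes the proof; allowing $c_0=\pm\fz$ the same device also subsumes the tail cases, so the initial three-layer split is not even needed. Two smaller points: the layer decomposition presumes $f$ real-valued, so you should first reduce to $\mathrm{Re}\,f$ or $\mathrm{Im}\,f$ via $|f(x)-f(y)|\ge|\mathrm{Re}\,f(x)-\mathrm{Re}\,f(y)|$ (the paper's modulus argument needs no such reduction); and the density point is superfluous, since any ball on which $G$ has positive measure suffices for the bound $|x-y|\le C(1+|y|)$.
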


Proposition \ref{s=0} might be well known.
However, to the best of our knowledge,
we did not find a complete proof in the existed literature.
For the convenience of the reader, we present the details here.
To prove Proposition \ref{s=0}, we establish the following lemma.
In what follows, for any $x$, $h\in\rn$ and $t\in(0,\fz)$, let
\begin{align*}
B_t^+:=\lf\{x=(x_1,\dots,x_n)\in\rn:\,|x|\le t\
{\rm and}\
x_j\ge0\ {\rm for\ any}\ j\in\{1,\dots,n\}\r\},
\end{align*}
$$\Delta_h f(x):=f(x+h)-f(x),\quad
\omega(f,t):=\sup_{h\in B_t^+}\|\Delta_h f\|_{L^p(\rn)}^p,$$
and
$$\omega^\ast(f,t):=\frac1{t^n}
\int_{B_{2t}^+\setminus B_t^+}\|\Delta_h f\|_{L^p(\rn)}^p\,dh.$$
\begin{lemma}\label{w-w*}
Let $p\in[1,\fz)$ and $f$ be a measurable function on $\rn$.
Then there exists a positive constant $C$,
depending only on $n$ and $p$,
such that, for any $t\in(0,\fz)$,
$$\omega(f,t)\le C\omega^\ast(f,t).$$
\end{lemma}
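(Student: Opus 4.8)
The plan is to fix $t\in(0,\fz)$ and a single $h\in B_t^+$, establish the pointwise-in-$h$ estimate $\|\Delta_h f\|_{L^p(\rn)}^p\le C\,\omega^\ast(f,t)$ with $C=C(n,p)$, and then take the supremum over $h\in B_t^+$ to conclude $\omega(f,t)\le C\,\omega^\ast(f,t)$. Write $A_t:=B_{2t}^+\setminus B_t^+$ for the octant-annulus over which $\omega^\ast$ averages, and record that $|A_t|=c_n t^n$ for a dimensional constant $c_n$. The engine is the subadditivity of $h\mapsto\|\Delta_h f\|_{L^p(\rn)}$: from the identity $\Delta_{h_1+h_2}f(x)=\Delta_{h_2}f(x+h_1)+\Delta_{h_1}f(x)$ together with the translation invariance of $\|\cdot\|_{L^p(\rn)}$ one obtains $\|\Delta_{h_1+h_2}f\|_{L^p(\rn)}\le\|\Delta_{h_1}f\|_{L^p(\rn)}+\|\Delta_{h_2}f\|_{L^p(\rn)}$ as well as $\|\Delta_{-h}f\|_{L^p(\rn)}=\|\Delta_h f\|_{L^p(\rn)}$.

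The decisive observation is that the octant hypothesis prevents the auxiliary increments from ever becoming short. For any $g\in A_t$, writing $h=(g+h)-g$ and applying subadditivity and symmetry gives
\[
\|\Delta_h f\|_{L^p(\rn)}\le\|\Delta_g f\|_{L^p(\rn)}+\|\Delta_{g+h}f\|_{L^p(\rn)}.
\]
Since $g$ and $h$ lie in the closed octant, $g\cdot h\ge0$, whence $|g+h|^2=|g|^2+2g\cdot h+|h|^2\ge|g|^2>t^2$, while also $|g+h|\le|g|+|h|\le 3t$. Thus $g+h$ always lies in the octant-annulus of radii $(t,3t]$ and can never slip below the inner radius $t$, which is precisely the configuration $\omega^\ast$ could not control. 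Hence only a mild overshoot past $2t$ must be handled: if $|g+h|\le 2t$ then $g+h\in A_t$ directly, whereas if $|g+h|\in(2t,3t]$ then its midpoint still lies in the octant with $|\tfrac{g+h}{2}|\in(t,\tfrac32 t]\subset(t,2t]$, so $\tfrac{g+h}{2}\in A_t$ and subadditivity yields $\|\Delta_{g+h}f\|_{L^p(\rn)}\le 2\,\|\Delta_{(g+h)/2}f\|_{L^p(\rn)}$.

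Now raise the displayed bound to the $p$-th power via $(a+b)^p\le 2^{p-1}(a^p+b^p)$ and average over $g\in A_t$ (the left-hand side is constant in $g$):
\[
\|\Delta_h f\|_{L^p(\rn)}^p\le\frac{2^{p-1}}{|A_t|}\int_{A_t}\Big[\|\Delta_g f\|_{L^p(\rn)}^p+\|\Delta_{g+h}f\|_{L^p(\rn)}^p\Big]\,dg.
\]
The first summand is $\tfrac{2^{p-1}t^n}{|A_t|}\omega^\ast(f,t)\sim\omega^\ast(f,t)$. For the second, split $A_t$ according to $|g+h|\le 2t$ or $|g+h|>2t$. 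On the first piece the substitution $w=g+h$ maps into $A_t$ (by the octant bound above), so $\int\|\Delta_{g+h}f\|_{L^p(\rn)}^p\,dg\le\int_{A_t}\|\Delta_w f\|_{L^p(\rn)}^p\,dw=t^n\omega^\ast(f,t)$; on the second piece the midpoint estimate followed by the substitution $a=\tfrac{g+h}{2}$ (Jacobian $2^{n}$, with image contained in $A_t$) bounds the contribution by $2^{p+n}t^n\omega^\ast(f,t)$. Collecting constants gives $\|\Delta_h f\|_{L^p(\rn)}^p\le C(n,p)\,\omega^\ast(f,t)$, uniformly in $h\in B_t^+$, and the supremum over $h$ finishes the proof.

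The routine ingredients are the subadditivity identities and the two changes of variables; notably, no lower bound on any ``admissible shift'' set is required, since we average over all of $A_t$. The one point that genuinely must be seen — and where a naive single-difference representation $h=g-(g-h)$ would fail (for instance in dimension one with $|h|=t$, where no $g$ keeps both $g$ and $g-h$ inside $A_t$) — is that working in the octant forces $|g+h|\ge|g|>t$, so the auxiliary increments remain in the controllable range $(t,3t]$ and the whole difficulty collapses to a single midpoint subdivision of the overshoot $(2t,3t]$. I expect this structural observation, rather than any computation, to be the crux.
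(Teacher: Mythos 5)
Your proof is correct and follows essentially the same route as the paper: the same subadditivity identity for $\Delta_h f$, the same use of the octant structure to force $|g+h|\ge|g|>t$, and an average over an annulus of measure $\sim t^n$ followed by a change of variables. The only difference is bookkeeping: the paper first proves the bound for $h\in B_{t/2}^+$ (averaging over $s\in B_{3t/2}^+\setminus B_t^+$ so that $h+s$ lands directly in $B_{2t}^+\setminus B_t^+$) and then recovers $\omega(f,t)\lesssim\omega(f,t/2)$ by halving $h$, whereas you keep $h\in B_t^+$ throughout and instead halve the overshooting sum $g+h$ when $|g+h|\in(2t,3t]$.
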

\begin{proof}
Let $p$ and $f$ be as in the present lemma.
Notice that, for any $h,\ x,\ s\in\rn$,
\begin{align*}
\Delta_h f(x)
&=f(x+h)-f(x)\\
&=\lf[f(x+h+s)-f(x)\r]-\lf[f(x+h+s)-f(x+h)\r]\\
&=\Delta_{h+s} f(x)-\Delta_s f(x+h),
\end{align*}
which implies that
$$|\Delta_h f(x)|\leq|\Delta_{h+s}f(x)|+|\Delta_{s}f(x)|.$$
Thus, for any $h\in B_{t/2}^+$ and $s\in\rn$, we have
$$\|\Delta_h f\|_{L^p(\rn)}^p
\le \|\Delta_{h+s} f\|_{L^p(\rn)}^p+\|\Delta_s f\|_{L^p(\rn)}^p,$$
which implies that
\begin{align*}
\int_{B_{\frac{3}{2}t}^+\setminus B_t^+}
\|\Delta_h f\|_{L^p(\rn)}^p\,ds
\le \int_{B_{\frac{3}{2}t}^+\setminus B_t^+}
\|\Delta_{h+s} f\|_{L^p(\rn)}^p\,ds
+\int_{B_{\frac{3}{2}t}^+\setminus B_t^+}
\|\Delta_s f\|_{L^p(\rn)}^p\,ds,
\end{align*}
and hence
\begin{align}\label{2t/t}
\|\Delta_h f\|_{L^p(\rn)}^p
&\ls t^{-n}\int_{B_{\frac{3}{2}t}^+\setminus B_t^+}
\|\Delta_{h+s} f\|_{L^p(\rn)}^p\,ds
+t^{-n}\int_{B_{\frac{3}{2}t}^+\setminus B_t^+}
\|\Delta_s f\|_{L^p(\rn)}^p\,ds \noz\\
&\ls t^{-n}\int_{B_{2t}^+\setminus B_t^+}
\|\Delta_{s} f\|_{L^p(\rn)}^p\,ds
\sim\omega^\ast(f,t).
\end{align}
Moreover, observe that, for any $t\in(0,\fz)$,
\begin{align*}
\omega(f,t)&=\sup_{h\in B_t^+}\int_{\rn}|f(x+h)-f(x)|^p\,dx\\
&\ls\sup_{h\in B_t^+}\lf[\int_{\rn}\lf|f(x+h)-f\lf(x+\frac h2\r)\r|^p\,dx
+\int_{\rn}\lf|f\lf(x+\frac h2\r)-f(x)\r|^p\,dx\r]\\
&\ls\sup_{\widetilde{h}\in B_{t/2}^+}\int_{\rn}|f(x+\widetilde{h})-f(x)|^p\,dx
\sim\omega(f,\frac{t}{2}).
\end{align*}
By this and \eqref{2t/t}, we conclude that, for any $t\in(0,\fz)$,
\begin{align*}
\omega(f,t)\ls\omega(f,\frac{t}{2})
\sim \sup_{h\in B_{t/2}^+}\|\Delta_h f\|_{L^p(\rn)}^p
\ls \omega^\ast(f,t),
\end{align*}
which completes the proof of Lemma \ref{w-w*}.
\end{proof}

Next, we prove Proposition \ref{s=0} via Lemma \ref{w-w*}.
\begin{proof}[Proof of Proposition \ref{s=0}]
Let $p\in[1,\fz)$, $s\in(-\fz,0]$, and $f\in W^{s,p}(\rn)$.
Then
\begin{align*}
\fz
&>\int_{\rn}\int_{\rn}\frac{|f(x)-f(y)|^p}{|x-y|^{n+sp}}\,dx\,dy
\ge \int_{B_\fz^+}\int_{\rn}\frac{|f(x+h)-f(x)|^p}{|h|^{n+sp}}\,dx\,dh\\
&=\int_{B_\fz^+}\frac{\|\Delta_h f\|_{L^p(\rn)}^p}{|h|^{n+sp}}\,dh
\ge\sum_{j=1}^\fz\int_{B_{2^{j+1}}^+\setminus B_{2^{j}}^+}
\frac{\|\Delta_h f\|_{L^p(\rn)}^p}{|h|^{n+sp}}\,dh\\
&\sim\sum_{j=1}^\fz2^{-j(n+sp)}
\int_{B_{2^{j+1}}^+\setminus B_{2^{j}}^+}\|\Delta_h f\|_{L^p(\rn)}^p\,dh
\sim\sum_{j=1}^\fz2^{-jsp}\omega^\ast(f,2^j).
\end{align*}
From this and the fact $\lim_{j\to\fz}2^{-jsp}\gtrsim1$,
we deduce that
$$\lim_{j\to\fz}\omega^\ast(f,2^j)=0.$$
By this, the observation that $\omega(f,t)$ is monotonic on $t$,
and Lemma \ref{w-w*}, we obtain, for any $t\in(0,\fz)$,
\begin{align*}
\omega(f,t)\le\lim_{j\to\fz} \omega(f,2^j)
\ls\lim_{j\to\fz} \omega^\ast(f,2^j)=0,
\end{align*}
which implies that, for any $h\in B_\fz^+
:=\{x=(x_1,\dots,x_n)\in\rn:\
x_j\ge0\ {\rm for\ any}\ j\in\{1,\dots,n\}\}$,
$$0=\|\Delta_h f\|_{L^p(\rn)}^p=\int_{\rn}|f(x+h)-f(x)|^p\,dx.$$
Therefore,
\begin{align}\label{B+=0}
0=\int_{B_\fz^+}\int_{\rn}|f(x+h)-f(x)|^p\,dx\,dh.
\end{align}

Next, we consider the case $h\in{\rn\setminus{B_\fz^{+}}}$.
For any $j\in\{1,\dots,n\}$, let
$$
\tau_j:\
\begin{cases}
\qquad\quad\rn&\longrightarrow\qquad\quad\rn\\
(x_1,\dots,x_j,\dots,x_n)&\longmapsto(x_1,\dots,-x_j,\dots,x_n).
\end{cases}
$$
Notice that each octant $\widetilde{B_\fz^+}$ of $\rn$ can be mapped onto
$B_\fz^+$ via a composition of reflections, namely,
there exists a
$\tau_0:=\tau_{j_1}\circ\cdots\circ\tau_{j_k}$
with $\{j_1,\dots,j_k\}\subset\{1,\dots,n\}$
such that $\tau_0(\widetilde{B_\fz^+})=B_\fz^+$,
which implies that
\begin{align*}
&\int_{\widetilde{B_\fz^+}}\int_{\rn}|f(x+h)-f(x)|^p\,dx\,dh
=\int_{B_\fz^+}\int_{\rn}|f(x+h)-f(x)|^p\,dx\,dh.
\end{align*}
Combining this with \eqref{B+=0}, we obtain
\begin{align*}
0=\int_{\rn}\int_{\rn}|f(x+h)-f(x)|^p\,dx\,dh
=\int_{\rn}\int_{\rn}|f(x)-f(y)|^p\,dx\,dy.
\end{align*}
This shows that $f$ equals to some constant almost everywhere on $\rn$,
which completes the proof of Proposition \ref{s=0}.
\end{proof}

Now,
for the endpoint space $JNQ^{n(\frac1q-\frac1p)}_{p,q}(\rn)$,
we have the following conclusion.

\begin{proposition}\label{prop-Wsp}
Let $p,\ q\in[1,\fz)$ and $\az_0:=n(\frac1q-\frac1p)$.
\begin{enumerate}
\item[\rm (i)] If $\az_0\in(0,\fz)$, then
$JNQ^{\az_0}_{p,q}(\rn)\supset W^{\az_0,q}(\rn)$ and
$\|\cdot\|_{JNQ^{\az_0}_{p,q}(\rn)}\le\|\cdot\|_{W^{\az_0,q}(\rn)}$.

\item[\rm (ii)] If $\az_0=0$, then
$JNQ^{\az_0}_{p,q}(\rn)=W^{\az_0,q}(\rn)=\{0\}$.
\end{enumerate}
\end{proposition}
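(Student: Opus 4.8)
The plan is to exploit the exact algebraic cancellation that occurs at the endpoint exponent $\az_0:=n(\frac1q-\frac1p)$. Indeed, at $\az=\az_0$ one computes $\frac{q\az_0}{n}-1=-\frac qp$, so that by Definition \ref{def-JNQpqa}, for every cube $Q$ of $\rn$,
\begin{align*}
|Q|\lf[\Phi_{f,q,\az_0}(Q)\r]^p
=\lf[\int_Q\int_Q\frac{|f(x)-f(y)|^q}{|x-y|^{n+q\az_0}}\,dx\,dy\r]^{\frac pq}.
\end{align*}
This identity removes the measure factor $|Q|$ from each summand defining $\|f\|_{JNQ^{\az_0}_{p,q}(\rn)}$ and reduces everything to the very double integrals $a_j:=\int_{Q_j}\int_{Q_j}|f(x)-f(y)|^q|x-y|^{-n-q\az_0}\,dx\,dy$ that build up the $W^{\az_0,q}(\rn)$ norm. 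My whole argument will be organized around this reduction.

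For part (i), first note that $\az_0>0$ forces $p>q$, hence $\theta:=p/q>1$. Given any $\ell\in(0,\fz)$ and any $\{Q_j\}_j\in\Pi_\ell$, the product sets $Q_j\times Q_j\subset\rn\times\rn$ have pairwise disjoint interiors because the $Q_j$ do, so $\sum_j\mathbf{1}_{Q_j}(x)\mathbf{1}_{Q_j}(y)\le1$ for almost every $(x,y)$, and therefore $\sum_j a_j\le\|f\|_{W^{\az_0,q}(\rn)}^q$. Combining this with the elementary inequality $\sum_j a_j^\theta\le(\sum_j a_j)^\theta$ (valid for $\theta\ge1$ and nonnegative $a_j$) and the displayed identity yields
\begin{align*}
\sum_j|Q_j|\lf[\Phi_{f,q,\az_0}(Q_j)\r]^p
=\sum_j a_j^\theta\le\lf(\sum_j a_j\r)^\theta
\le\|f\|_{W^{\az_0,q}(\rn)}^p.
\end{align*}
Taking the supremum over $\ell\in(0,\fz)$ and $\{Q_j\}_j\in\Pi_\ell$ then gives $\|f\|_{JNQ^{\az_0}_{p,q}(\rn)}\le\|f\|_{W^{\az_0,q}(\rn)}$, i.e.\ the asserted embedding with constant $1$.

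For part (ii), the condition $\az_0=0$ is equivalent to $p=q$. The equality $W^{0,q}(\rn)=\{0\}$ is immediate from Proposition \ref{s=0} (with $s=0$ and the role of $p$ there played by $q$). To prove $JNQ^0_{p,q}(\rn)=\{0\}$ I cannot invoke Theorem \ref{thm-JNQ123}(ii), since here $\az=\az_0$ (not $\az>\az_0$) and $\az_0=0<1$; instead I use the displayed identity with $p=q$, applied to the single-cube collection $\{Q\}\in\Pi_{\ell(Q)}$, to get
\begin{align*}
\int_Q\int_Q\frac{|f(x)-f(y)|^q}{|x-y|^n}\,dx\,dy
=|Q|\lf[\Phi_{f,q,0}(Q)\r]^q
\le\|f\|_{JNQ^0_{p,q}(\rn)}^q
\end{align*}
for every cube $Q$. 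Choosing $Q=Q(\mathbf{0},\ell)$ and letting $\ell\to\fz$, the monotone convergence theorem upgrades this to $\|f\|_{W^{0,q}(\rn)}^q\le\|f\|_{JNQ^0_{p,q}(\rn)}^q<\fz$, so $JNQ^0_{p,q}(\rn)\subset W^{0,q}(\rn)=\{0\}$, and the three spaces coincide.

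The only genuinely delicate point is part (ii): the endpoint $\az_0=0$ sits exactly on the triviality boundary of Theorem \ref{thm-JNQ123}(ii) and is not covered by it, so the main obstacle is to find the right mechanism forcing triviality there. The monotone-convergence reduction to the (already established) triviality of $W^{0,q}(\rn)$ is what resolves this; it also transparently produces the inclusion $JNQ^0_{p,q}(\rn)\subset W^{0,q}(\rn)$, which might otherwise look surprising. Part (i), by contrast, is essentially a bookkeeping exercise once the endpoint identity and the superadditivity inequality are in place.
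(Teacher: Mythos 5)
Your proof is correct and follows essentially the same route as the paper: both exploit the cancellation $|Q|\,[\Phi_{f,q,\az_0}(Q)]^p=[\int_Q\int_Q|f(x)-f(y)|^q|x-y|^{-n-q\az_0}\,dx\,dy]^{p/q}$ at the endpoint exponent, then use the disjointness of the $Q_j\times Q_j$ together with $\sum_j a_j^{p/q}\le(\sum_j a_j)^{p/q}$ for (i), and reduce (ii) to Proposition \ref{s=0} via the identification with $W^{0,q}(\rn)$. Your monotone-convergence justification of the inclusion $JNQ^0_{p,q}(\rn)\subset W^{0,q}(\rn)$ in (ii) is in fact slightly more explicit than the paper's, which simply asserts that the supremum of $\sum_j\int_{Q_j}\int_{Q_j}$ equals the full double integral over $\rn\times\rn$.
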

\begin{proof}
We first prove (i).
If $\az_0\in(0,\fz)$, then $\frac1q-\frac1p>0$
and hence $\frac{p}{q}>1$.
Therefore, for any measurable function $f$ on $\rn$,
\begin{align*}
\|f\|_{JNQ^{\az_0}_{p,q}(\rn)}
&=\sup_{\gfz{\ell\in(0,\fz)}{\{Q_j \}_{j}\in\Pi_\ell}}
\lf\{\sum_j \lf[\int_{Q_j}\int_{Q_j}\frac{|f(x)-f(y)|^q}{|x-y|^{n+q\az_0}}
\,dy\,dx \r]^{\frac pq} \r\}^{\frac1p}\\
&\le\sup_{\gfz{\ell\in(0,\fz)}{\{Q_j \}_{j}\in\Pi_\ell}}
\lf[\sum_j \int_{Q_j}\int_{Q_j}\frac{|f(x)-f(y)|^q}{|x-y|^{n+q\az_0}}
\,dy\,dx  \r]^{\frac1q}\\
&=\lf[\int_{\rn}\int_{\rn}\frac{|f(x)-f(y)|^q}{|x-y|^{n+q\az_0}}
\,dy\,dx  \r]^{\frac1q}
=\|f\|_{W^{\az_0,q}(\rn)}.
\end{align*}
This shows that $W^{\az_0,q}(\rn)\subset JNQ^{\az_0}_{p,q}(\rn)$ and
$\|\cdot\|_{JNQ^{\az_0}_{p,q}(\rn)}\le\|\cdot\|_{W^{\az_0,q}(\rn)}$.

Next, we prove (ii). If $\az_0=0$, then $p=q$ and hence,
for any measurable function $f$ on $\rn$,
\begin{align*}
\|f\|_{JNQ^{\az_0}_{p,q}(\rn)}
&=\sup_{\gfz{\ell\in(0,\fz)}{\{Q_j \}_{j}\in\Pi_\ell}}
\lf[\sum_j \int_{Q_j}\int_{Q_j}\frac{|f(x)-f(y)|^p}{|x-y|^{n}}
\,dy\,dx \r]^{\frac1p}\\
&=\lf[\int_{\rn}\int_{\rn}\frac{|f(x)-f(y)|^p}{|x-y|^{n}}
\,dy\,dx  \r]^{\frac1p}
=\|f\|_{W^{\az_0,q}(\rn)}.
\end{align*}
This shows that $JNQ^{\az_0}_{p,q}(\rn)=W^{\az_0,q}(\rn)$
which, together with Proposition \ref{s=0}, further implies that
$JNQ^{\az_0}_{p,q}(\rn)=\{0\}$.
This finishes the proof of Proposition \ref{prop-Wsp}.
\end{proof}

Also, recall that the \emph{Sobolev space $\dot{W}^{1,\gz}(\rn)$}
with $\gz\in(0,\fz)$ is defined to be the set of
all weakly differentiable functions $f$ on $\rn$ such that
$$\|f\|_{\dot{W}^{1,\gz}(\rn)}:=
\lf[\int_{\rn}|\nabla f(x)|^{\gz}\,dx\r]^\frac1\gz
<\fz.$$
Borrowing some ideas from \cite[p.\,8, Theorem 1.4(ii)]{x19},
we establish the following continuously embedding from
$\dot{W}^{1,\gz}(\rn)$ to $JNQ^{\az}_{p,q}(\rn)$.
\begin{proposition}\label{prop-W1r}
Let $\az\in(-\fz,1)$, $p,\ q\in[1,\fz)$, and $\gz\in[q,\fz)$
with $\frac1\gz=\frac1p+\frac1n$.
Then $\dot{W}^{1,\gz}(\rn)\subset JNQ^{\az}_{p,q}(\rn)$
and there exists a positive
constant $C$ such that, for any $f\in\dot{W}^{1,\gz}(\rn)$,
$$\|f\|_{JNQ^{\az}_{p,q}(\rn)}\le C \|f\|_{\dot{W}^{1,\gz}(\rn)}.$$
\end{proposition}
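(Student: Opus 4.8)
The plan is to reduce everything to a single pointwise estimate for $f(x)-f(y)$ in terms of the gradient, and then to track the powers of the edge length so carefully that the hypothesis $\frac1\gz=\frac1p+\frac1n$ forces an exact cancellation. Since $f\in\dot{W}^{1,\gz}(\rn)$ is, after the standard absolute continuity on almost every line, representable as $f(x)-f(y)=\int_0^1\nabla f((1-t)x+ty)\cdot(y-x)\,dt$ for almost every $x,\,y$, I would first record the bound $|f(x)-f(y)|^q\le|x-y|^q\int_0^1|\nabla f((1-t)x+ty)|^q\,dt$, obtained from Jensen's inequality (here $q\ge1$). The whole point of keeping the factor $|x-y|^q$ is that it is exactly what makes the kernel $|x-y|^{-n-q\az}$ integrable near the diagonal once $\az<1$; this is the feature that lets one treat the entire range $\az\in(-\fz,1)$ at once.

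Second, fix a cube $Q$ of edge length $\ell$ and estimate $\int_Q\int_Q\frac{|f(x)-f(y)|^q}{|x-y|^{n+q\az}}\,dx\,dy$. After inserting the bound above and using the Tonelli theorem to pull out the $t$-integral, for each fixed $t$ (say $t\in(0,\frac12]$, the other half being symmetric) I would change variables $w=(1-t)x+ty$ in the $x$-variable. As $Q$ is convex one has $w\in Q$, and $|x-y|=(1-t)^{-1}|w-y|$, so the $x$-integral becomes a constant times $(1-t)^{q(\az-1)}\int_Q|\nabla f(w)|^q|w-y|^{-n+q(1-\az)}\,dw$. Because $\az<1$, the exponent $n-q(1-\az)$ is $<n$, whence $\int_Q|w-y|^{-n+q(1-\az)}\,dy\ls\ell^{q(1-\az)}$, while $\int_0^{1/2}(1-t)^{q(\az-1)}\,dt$ is a finite constant. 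This gives the scale-clean estimate $\int_Q\int_Q\frac{|f(x)-f(y)|^q}{|x-y|^{n+q\az}}\,dx\,dy\ls\ell^{q(1-\az)}\int_Q|\nabla f|^q$, and hence $\Phi_{f,q,\az}(Q)\ls\ell^{1-n/q}(\int_Q|\nabla f|^q)^{1/q}$, with the dependence on $\az$ cancelling out.

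Third, sum over an arbitrary collection $\{Q_j\}_j\in\Pi_\ell$. Writing $|Q_j|=\ell^n$ and applying the H\"older inequality on each $Q_j$ (legitimate since $\gz\ge q$) to pass from $L^q$ to $L^\gz$, then using the super-additivity $\sum_j a_j^{p/\gz}\le(\sum_j a_j)^{p/\gz}$ (valid because $\frac1\gz=\frac1p+\frac1n>\frac1p$ gives $p/\gz>1$) together with the pairwise disjointness of the $Q_j$, I would arrive at
\[
\sum_j|Q_j|\lf[\Phi_{f,q,\az}(Q_j)\r]^p\ls \ell^{\,n+(1-n/q)p+np(1/q-1/\gz)}\,\|f\|_{\dot{W}^{1,\gz}(\rn)}^p .
\]
The final and decisive step is the bookkeeping: the exponent equals $n+p-np/\gz$, and the hypothesis $\frac1\gz=\frac1p+\frac1n$ (equivalently $np/\gz=n+p$) makes it vanish. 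Thus the bound is uniform in $\ell$ and in the collection, and taking the supremum gives $\|f\|_{JNQ^{\az}_{p,q}(\rn)}\ls\|f\|_{\dot{W}^{1,\gz}(\rn)}$.

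I expect the only genuinely delicate point to be the change of variables in the second step, namely keeping track of the region of integration and of the powers of $(1-t)$, and justifying the almost-everywhere-on-lines representation for general $f\in\dot{W}^{1,\gz}(\rn)$; the latter I would handle by first assuming $f$ smooth and then invoking density or the standard absolute-continuity-on-lines characterization. Everything else is elementary. In particular, this route uses neither the Hardy--Littlewood maximal inequality nor the Sobolev inequality, so it does not degenerate at the endpoint $\gz=q=1$ (where $p=n/(n-1)$), which a maximal-function argument would fail to cover.
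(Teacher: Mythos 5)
Your proof is correct, and its overall architecture coincides with the paper's: a fundamental-theorem-of-calculus bound along segments, a local estimate of the form $\Phi_{f,q,\az}(Q)\ls[\ell(Q)]^{1-n/q}\|\nabla f\|_{L^q}$, then the H\"older inequality (using $\gz\ge q$), super-additivity of $a\mapsto a^{p/\gz}$ (using $p/\gz=1+p/n>1$), and the exact cancellation of the powers of $\ell$ forced by $\frac1\gz=\frac1p+\frac1n$. The one genuine difference is in how the local estimate is executed. The paper applies the Minkowski integral inequality to $\int_0^1|\nabla f(y+tz)|\,dt$ and then substitutes $w=y+tz$, which pushes the gradient integral onto the dilated cube $(1+2\sqrt n)Q$; as a consequence the summation step needs the finite-overlap estimate $\sum_j\mathbf{1}_{(1+2\sqrt n)Q_j}\le C_{(n)}$. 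You instead use Jensen's inequality to get $|f(x)-f(y)|^q\le|x-y|^q\int_0^1|\nabla f((1-t)x+ty)|^q\,dt$ and change variables in the endpoint variable, so that by convexity the point $w=(1-t)x+ty$ stays inside $Q$ and the factor $(1-t)^{q(\az-1)}$ is harmlessly bounded on $t\in(0,\frac12]$ (with the symmetric substitution for $t\in[\frac12,1]$); the gradient integral then lives on $Q_j$ itself and plain disjointness suffices in the summation. This is a mild but real simplification. Your bookkeeping is right: the exponent $n+(1-n/q)p+np(1/q-1/\gz)=n+p-np/\gz$ vanishes precisely under the hypothesis. The only point you flag as needing care, the a.e.-on-segments validity of the FTC representation for $f\in\dot W^{1,\gz}(\rn)$, is handled at exactly the same level of rigor in the paper (which also invokes the fundamental theorem of calculus without further comment), so it is not a gap relative to the paper's own standard.
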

\begin{proof}
Let all the symbols be as in the present proposition.
We claim that, for any given cube $Q$ of $\rn$ with edge length $\ell_0$,
\begin{align}\label{nabla1}
\lf[\int_Q\int_Q\frac{|f(x)-f(y)|^q}{|x-y|^{n+q\az}}
\,dx\,dy\r]^\frac1q
\ls|Q|^{\frac{1-\az}{n}}\lf[\int_{(1+2\sqrt n) Q}|\nabla f(w)|^q
\,dw\r]^\frac1q.
\end{align}
Indeed, for any $y$, $z\in\rn$,
from the fundamental theorem of calculus,
it follows that
$$f(y+z)-f(y)=\int_{0}^{1}z\cdot\nabla f(y+tz)\,dt$$
and hence
\begin{align*}
|f(y+z)-f(y)|\le
\int^{1}_{0}|z|\lf|\nabla f(y+tz)\r|\,dt.
\end{align*}
By this, the Minkowski integral inequality,
and $\az\in(-\fz,1)$, we conclude that
\begin{align*}
&\lf[\int_Q\int_Q\frac{|f(x)-f(y)|^q}{|x-y|^{n+q\az}}
\,dx\,dy\r]^\frac1q\\
&\quad\le
\lf\{\int_Q\int_{B(\mathbf{0},\sqrt{n}\ell_0)}\lf[\frac{|f(y+z)-f(y)|}{|z|}\r]^q
|z|^{q(1-\az)-n}\,dz\,dy\r\}^\frac1q\\
&\quad\le
\lf\{\int_Q\int_{B(\mathbf{0},\sqrt{n}\ell_0)}
\lf[\int_{0}^{1}|\nabla f(y+tz)|\,dt\r]^q
|z|^{q(1-\az)-n}\,dz\,dy\r\}^\frac1q\\
&\quad\le
\int_{0}^{1}\lf[
\int_Q\int_{B(\mathbf{0},\sqrt{n}\ell_0)}|\nabla f(y+tz)|^q
|z|^{q(1-\az)-n}\,dz\,dy
\r]^\frac1q\,dt\\
&\quad\le\lf[
\int_{(1+2\sqrt n) Q}
|\nabla f(w)|^q
\int_{B(\mathbf{0},\sqrt{n}\ell_0)}
|z|^{q(1-\az)-n}\,dz\,dw
\r]^\frac1q\\
&\quad\sim
|Q|^{\frac{1-\az}{n}}\lf[\int_{(1+2\sqrt n) Q}|\nabla f(w)|^q
\,dw\r]^\frac1q.
\end{align*}
This shows that \eqref{nabla1} holds true, and hence we have
\begin{align}\label{nabla2}
\lf[|Q|^{\frac{q\az}{n}-1}\int_Q\int_Q\frac{|f(x)-f(y)|^q}{|x-y|^{n+q\az}}
\,dx\,dy\r]^\frac1q
\ls\lf[|Q|^{\frac{q}{n}-1}\int_{(1+\sqrt n)Q}|\nabla f(w)|^q
\,dw\r]^\frac1q.
\end{align}

Now, let $\{Q_j\}_j\in\Pi_\ell$ with $\ell\in(0,\fz)$.
Then a geometrical observation shows that
there exists a positive constant $C_{(n)}$, depending only on $n$,
such that, for any $x\in\rn$,
\begin{align}\label{finitecover}
\sum_{j}\mathbf{1}_{(1+2\sqrt n)Q_j}(x)\le C_{(n)}.
\end{align}
From \eqref{nabla2},
$\frac1p+\frac1n=\frac1\gz\le\frac1q$, the H\"older inequality,
and \eqref{finitecover},
we deduce that
\begin{align*}
&\sum_{j}|Q_j|\lf[|Q_j|^{\frac{q\az}{n}-1}
\int_{Q_j}\int_{Q_j}\frac{|f(x)-f(y)|^q}{|x-y|^{n+q\az}}
\,dx\,dy\r]^\frac{p}q\\
&\quad\ls\sum_{j}\lf[|Q_j|^{\frac{q}p+\frac{q}{n}}
\fint_{(1+\sqrt n)Q_j}|\nabla f(w)|^q
\,dw\r]^\frac{p}q\\
&\quad\ls\sum_{j}\lf[\
\int_{(1+2\sqrt n)Q_j}|\nabla f(w)|^{1/(\frac1p+\frac1n)}
\,dw\r]^{p(\frac1p+\frac1n)}\\
&\quad\ls\lf[\sum_{j}
\int_{(1+2\sqrt n)Q_j}|\nabla f(w)|^{1/(\frac1p+\frac1n)}
\,dw\r]^{p(\frac1p+\frac1n)}
\ls\lf[
\int_{\rn}|\nabla f(w)|^{\frac1\gz}
\,dw\r]^{\frac{p}\gz},
\end{align*}
where the implicit positive constants are
independent of $\ell$, $\{Q_j\}_j$, and $f$.
This implies that
$$\|f\|_{JNQ^{\az}_{p,q}(\rn)}\ls \|f\|_{\dot{W}^{1,\gz}(\rn)}$$
with $\frac1\gz=\frac1p+\frac1n$,
and hence finishes the proof of Proposition \ref{prop-W1r}.
\end{proof}

As a summary of the main results in this section,
we have the following complete classification.
\begin{theorem}\label{thm-clas}
Let $p,\ q\in[1,\fz)$, $\az\in\rr$, and $\az_0:=n(\frac1q-\frac1p)$.
\begin{enumerate}
\item[\rm (i)] If $\az_0<0$, then $JNQ^{\az}_{p,q}(\rn)=\{0\}$.

\item[\rm (ii)] If $\az_0=0$, then
$$JNQ^{\az}_{p,q}(\rn)=
\begin{cases}
JN^{\rm con}_{p,p}(\rn), &\az\in(-\fz,0), \\
\{0\}, &\az\in[0,\fz).
\end{cases}$$

\item[\rm (iii)] If $\az_0\in(0,1)$, then
$$JNQ^{\az}_{p,q}(\rn)
\begin{cases}
=JN^{\rm con}_{p,q}(\rn), &\az\in(-\fz,0), \\
\supset W^{\az_0,q}(\rn), &\az\in[0,\az_0], \\
=\{0\}, &\az\in(\az_0,\fz).
\end{cases}$$

\item[\rm (iv)]
If $\az_0\in[1,\fz)$, then
$$JNQ^{\az}_{p,q}(\rn)
\begin{cases}
=JN^{\rm con}_{p,q}(\rn), &\az\in(-\fz,0), \\
\supset \dot{W}^{1,(\frac1p+\frac1n)^{-1}}(\rn), &\az\in[0,1), \\
=\{0\}, &\az\in[1,\fz).
\end{cases}$$

\end{enumerate}
\end{theorem}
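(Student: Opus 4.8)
The plan is to read Theorem \ref{thm-clas} as a bookkeeping assembly of the results already proved in this section, organized along the four regimes of $\az_0=n(\frac1q-\frac1p)$; recall that $\az_0<0$, $\az_0=0$, and $\az_0>0$ correspond exactly to $p<q$, $p=q$, and $p>q$. Two facts hold uniformly and should be recorded at the outset. First, for every $\az\in(-\fz,0)$, Theorem \ref{thm-JNQ123}(iii) gives $JNQ^\az_{p,q}(\rn)=JN^{\rm con}_{p,q}(\rn)$ with equivalent norms; this is the top line of (iii) and (iv), becomes the top line of (ii) upon recalling $q=p$, and is the entry point for the negative-$\az$ analysis in (i). Second, for every $\az$ with $\az>\az_0$ or $\az\ge1$, Theorem \ref{thm-JNQ123}(ii) gives $JNQ^\az_{p,q}(\rn)=\{0\}$, which will dispatch each ``triviality for large $\az$'' cell in the table.

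For part (i), where $p<q$, I would split $\rr=(-\fz,0)\cup[0,\fz)$: on $[0,\fz)$ one has $\az\ge0>\az_0$, so Theorem \ref{thm-JNQ123}(ii) yields $\{0\}$, while on $(-\fz,0)$ the first uniform fact identifies the space with $JN^{\rm con}_{p,q}(\rn)$, which Corollary \ref{coro-JN=c} forces to be $\{0\}$ since $p<q$; hence $JNQ^\az_{p,q}(\rn)=\{0\}$ for all $\az$. Part (ii), where $p=q$, is parallel: the negative-$\az$ line is the first uniform fact, the endpoint $\az=0=\az_0$ is exactly Proposition \ref{prop-Wsp}(ii), and $\az>0=\az_0$ is again Theorem \ref{thm-JNQ123}(ii).

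For part (iii), where $\az_0\in(0,1)$, the lines $\az<0$ and $\az\in(\az_0,\fz)$ follow from the two uniform facts (note $\az>\az_0$ alone covers all of $(\az_0,\fz)$). The substance is the middle range $[0,\az_0]$: Proposition \ref{prop-Wsp}(i) supplies $JNQ^{\az_0}_{p,q}(\rn)\supset W^{\az_0,q}(\rn)$ at the endpoint, and the monotonicity Theorem \ref{thm-JNQ123}(i) propagates it downward, since $\az\le\az_0$ gives $JNQ^\az_{p,q}(\rn)\supset JNQ^{\az_0}_{p,q}(\rn)\supset W^{\az_0,q}(\rn)$. For part (iv), where $\az_0\in[1,\fz)$, the lines $\az<0$ and $\az\ge1$ are the two uniform facts, and the remaining range $\az\in[0,1)$ is precisely the hypothesis set of Proposition \ref{prop-W1r} with $\gz=(\frac1p+\frac1n)^{-1}$, giving $JNQ^\az_{p,q}(\rn)\supset\dot{W}^{1,\gz}(\rn)$.

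The one genuine subtlety, and the step I expect to need the most care, is matching the hypotheses of Proposition \ref{prop-W1r} to Case (iv): that proposition requires $\gz\in[q,\fz)$, i.e.\ $\frac1\gz\le\frac1q$, and since $\frac1\gz=\frac1p+\frac1n$ this is equivalent to $\frac1n\le\frac1q-\frac1p$, that is, to $\az_0\ge1$. Hence the admissibility condition $\gz\ge q$ coincides exactly with the defining inequality of Case (iv)---so the $\dot{W}^{1,\gz}$ embedding is available on all of $[0,1)$ there and unavailable in Case (iii), where instead the $W^{\az_0,q}$ embedding together with monotonicity carries the load. Verifying this equivalence, alongside the parallel translations between the sign of $\az_0$ and the order of $p$ and $q$, is what makes the four-way classification both exhaustive and internally consistent.
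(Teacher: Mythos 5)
Your proposal is correct and assembles the classification from exactly the same ingredients as the paper: Theorem \ref{thm-JNQ123}(i)--(iii), Corollary \ref{coro-JN=c}, Proposition \ref{prop-Wsp}, and Proposition \ref{prop-W1r}, including the key observation that $\gz\ge q$ with $\frac1\gz=\frac1p+\frac1n$ is equivalent to $\az_0\ge1$. The only cosmetic difference is in part (i), where the paper reduces all $\az$ to the negative range via monotonicity while you split at $\az=0$ and invoke the triviality statement directly for $\az\ge0$; both are immediate.
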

\begin{proof}
First, (i) immediately follows from Corollary \ref{coro-JN=c}
and Theorem \ref{thm-JNQ123}(i).

Next, we prove (ii). Indeed,
Theorem \ref{thm-JNQ123}(iii) shows the case $\az\in(-\fz,0)$,
Proposition \ref{prop-Wsp} (ii) shows the case $\az=0$,
and Theorem \ref{thm-JNQ123}(ii) shows the case $\az\in(0,\fz)$.
This finishes the proof of (ii).

Now, we show (iii). Indeed,
Theorem \ref{thm-JNQ123}(iii) indicates the case $\az<0$;
by Theorem \ref{thm-JNQ123}(i) and Proposition \ref{prop-Wsp}(i),
we have
$$JN^{\az}_{p,q}(\rn)\supset JN^{\az_0}_{p,q}(\rn)
\supset W^{\az_0,q}(\rn)$$
when $\az\in[0,\az_0]$;
Theorem \ref{thm-JNQ123}(ii) shows the case $\az\in(\az_0,\fz)$,
which completes the proof of (iii).

Finally, we prove (iv).
The cases $\az\in(-\fz,0)$ and $\az\in[1,\fz)$ have been
showed, respectively, in (iii) and (ii) of Theorem \ref{thm-JNQ123}.
Moreover, notice that
$$\az_0\ge1
\Longleftrightarrow
\frac1p+\frac1n\le\frac1q.$$
Using this and Proposition \ref{prop-W1r},
we obtain the case  $\az\in[0,1)$.
This finishes the proof of (iv) and hence of Theorem \ref{thm-clas}.
\end{proof}

\begin{remark}
It is still \emph{unclear} whether or not the inclusion in (iii) and (iv)
of Theorem \ref{thm-clas} are proper.
\end{remark}

\section{Mean Oscillations and Dyadic Counterparts}\label{sec-MO&Dya}

In this section, we show that $JNQ$ spaces can be equivalently
characterized by mean oscillations in Subsection \ref{subsec-MO}.
Moreover, we use this characterization to study the
dyadic counterpart of $JNQ$ spaces in Subsection \ref{subsec-Dya}.

\subsection{Characterization via Mean Oscillations}\label{subsec-MO}

Now, we characterize $JNQ$ spaces in terms of mean oscillations
as in \cite[Section 5]{ejpx00}.
By Corollary \ref{thm-clas}(i), we know that
the space $JNQ^{\az}_{p,q}(\rn)$ is trivial when $p<q$,
and hence we only pay attention to the case $p\ge q$.
In what follows, for any $k\in\zz_+$ and any cube $Q$ of $\rn$,
we use $\mathscr{D}_k(Q)$ to denote the dyadic cubes contained in $Q$ of level $k$;
for any given $q\in[1,\fz)$, $\az\in\rr$, measurable function $f$,
and any cube $Q$ of $\rn$, let
\begin{align}\label{Psifqa}
\Psi_{f,q,\az}(Q):=\lf[\sum_{k=0}^\fz 2^{(q\az-n)k}
\sum_{I\in\mathscr{D}_k(Q)}\fint_{I}|f(x)-f_I|^q\,dx\r]^\frac1q,
\end{align}
where we define $\fint_{I}|f(x)-f_I|^q\,dx=\fz$ if $f\mathbf{1}_I\notin L^1(\rn)$
as in Remark \ref{JNQpqa=JNQcon};
moreover, for any $x=(x_1,\dots,x_n)\in\rn$,
let $|x|_\fz:=\max_{i\in\{1,\dots,n\}}|x_i|$.

\begin{theorem}\label{thm-mean-osc}
Let $1\le q\le p<\fz$ and $\az\in\rr$.
Then $f\in JNQ^{\az}_{p,q}(\rn)$ if and only if
$f$ is measurable on $\rn$ and
\begin{align}\label{Psi}
\|f\|_{\widetilde{JNQ}^{\az}_{p,q}(\rn)}
:=\sup_{\gfz{\ell\in(0,\fz)}{\{Q_j \}_{j}\in\Pi_\ell}}
\lf\{\sum_j |Q_j|\lf[\Psi_{f,q,\az}(Q_j)\r]^p\r\}^\frac1p<\fz,
\end{align}
where $\Psi_{f,q,\az}(Q_j)$ is as in \eqref{Psifqa}
with $Q$ replaced by $Q_j$ for any $j$.
Moreover, $\|\cdot\|_{\widetilde{JNQ}^{\az}_{p,q}(\rn)}\sim
\|\cdot\|_{JNQ^{\az}_{p,q}(\rn)}$.
\end{theorem}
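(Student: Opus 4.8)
The plan is to reduce the asserted norm equivalence $\|\cdot\|_{\widetilde{JNQ}^{\az}_{p,q}(\rn)}\sim\|\cdot\|_{JNQ^{\az}_{p,q}(\rn)}$ to a pointwise (per cube) comparison between the Gagliardo-type functional $\Phi_{f,q,\az}$ of \eqref{Phifqa} and the dyadic mean-oscillation functional $\Psi_{f,q,\az}$ of \eqref{Psifqa}, and then to upgrade those comparisons to the level of congruent families. I would split the argument according to the sign of $\az$. For $\az<0$ I would first show, using that ${\rm MO}_{f,q}$ is almost increasing together with the geometric weight $2^{(q\az-n)k}$, that $\Psi_{f,q,\az}(Q)\sim{\rm MO}_{f,q}(Q)$ for every cube $Q$ (the series $\sum_k 2^{q\az k}$ converges precisely because $\az<0$, so the $k=0$ term dominates); combined with Theorem \ref{thm-JNQ123}(iii), which identifies $JNQ^{\az}_{p,q}(\rn)$ with $JN^{\rm con}_{p,q}(\rn)$, this at once yields $\widetilde{JNQ}^{\az}_{p,q}(\rn)=JNQ^{\az}_{p,q}(\rn)$. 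Hence the real content is the case $\az\ge0$, where $n+q\az\ge n>0$.

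For the inequality $\|f\|_{\widetilde{JNQ}^{\az}_{p,q}(\rn)}\ls\|f\|_{JNQ^{\az}_{p,q}(\rn)}$ I would prove the clean pointwise bound $\Psi_{f,q,\az}(Q)\ls\Phi_{f,q,\az}(Q)$ for each cube $Q$ of edge length $\ell$. Starting from Lemma \ref{osc-equ}, which gives $\fint_I|f-f_I|^q\le\fint_I\fint_I|f(x)-f(y)|^q\,dx\,dy$ on each $I\in\mathscr{D}_k(Q)$, I would split the inner double integral over $I\times I$ according to the relative scale $|x-y|\sim 2^{-m}\ell(I)$ and use $n+q\az>0$ to pass to the weight $|x-y|^{-n-q\az}$, which produces a gain of $2^{-m(n+q\az)}$ at relative depth $m$. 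Summing in $k$ and $m$ then reorganizes everything into $\int_Q\int_Q|f(x)-f(y)|^q|x-y|^{-n-q\az}\,dx\,dy$ with only a bounded multiplicity: for a fixed pair at absolute scale $2^{-j}\ell$ the admissible levels $k$ satisfy $k+m=j$ and $k$ is at most the deepest common dyadic level, so the total weight it receives is $\sum_k 2^{-(j-k)(n+q\az)}\le\sum_{i\ge0}2^{-i(n+q\az)}=O(1)$. This yields $[\Psi_{f,q,\az}(Q)]^q\ls\ell^{q\az-n}\int_Q\int_Q|f(x)-f(y)|^q|x-y|^{-n-q\az}\,dx\,dy=[\Phi_{f,q,\az}(Q)]^q$, and then $\sum_j|Q_j|[\Psi_{f,q,\az}(Q_j)]^p\le C\sum_j|Q_j|[\Phi_{f,q,\az}(Q_j)]^p$ over any congruent family gives the desired norm inequality directly, with no appeal to Proposition \ref{prop-integral}.

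For the reverse inequality I would establish the pointwise bound $\Phi_{f,q,\az}(Q)\ls\Psi_{f,q,\az}(CQ)$ for a dimensional dilation constant $C$. Decomposing $\int_Q\int_Q|f(x)-f(y)|^q|x-y|^{-n-q\az}$ over the scales $|x-y|\sim 2^{-j}\ell$ and covering each such pair by a common dyadic cube of $CQ$ of level comparable to $j$ (the dilation is needed precisely to absorb the pairs straddling dyadic hyperplanes), I would bound the scale-$j$ contribution by $2^{(q\az-n)j}\sum_{I\in\mathscr{D}_{k(j)}(CQ)}\fint_I|f-f_I|^q$ through the upper estimate in Lemma \ref{osc-equ}; summing in $j$ reproduces $[\Psi_{f,q,\az}(CQ)]^q$. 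To convert this into a norm inequality, given a congruent family $\{Q_j\}_j\in\Pi_\ell$ the dilated cubes $\{CQ_j\}_j$ have equal edge length and bounded overlap, so by Lemma \ref{lem-partition} (or a direct bounded-overlap colouring in its spirit) they can be split into $N=N(n,C)$ subfamilies each lying in $\Pi_{C\ell}$; applying the definition of $\|\cdot\|_{\widetilde{JNQ}^{\az}_{p,q}(\rn)}$ to each subfamily and summing gives $\sum_j|Q_j|[\Phi_{f,q,\az}(Q_j)]^p\ls C^{-n}N\|f\|^p_{\widetilde{JNQ}^{\az}_{p,q}(\rn)}$, whence $\|f\|_{JNQ^{\az}_{p,q}(\rn)}\ls\|f\|_{\widetilde{JNQ}^{\az}_{p,q}(\rn)}$.

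The main obstacle in both directions is the behaviour of the Gagliardo integrand near the diagonal, and especially for pairs $x,y$ that are close yet separated by a deep dyadic hyperplane, which a single dyadic grid assigns to an anomalously coarse cube. In the forward direction this appears as a potential logarithmic over-counting across scales, which I expect to tame exactly through the geometric decay $2^{-m(n+q\az)}$ coming from the refined relative-scale splitting (this is the only place the hypothesis $\az\ge0$, i.e.\ $n+q\az>0$, is genuinely used); in the reverse direction it forces the passage to the dilate $CQ$ together with the combinatorial partition of the dilated congruent family provided by Lemma \ref{lem-partition}, which I anticipate will be the most delicate bookkeeping step.
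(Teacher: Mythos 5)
Your forward inequality and your treatment of $\az<0$ are sound and essentially reproduce the paper's argument: the pointwise bound $[\Psi_{f,q,\az}(Q)]^q\ls[\Phi_{f,q,\az}(Q)]^q$ is the paper's \eqref{Psi1} (proved there by the same scale count, via the kernel estimate $k_Q(x,y)\ls[\ell(Q)/|x-y|]^{n+q\az}|Q|^{-2}$ when $n+q\az>0$), and the equivalence $\Psi_{f,q,\az}(Q)\sim{\rm MO}_{f,q}(Q)$ for $\az<0$ is the paper's \eqref{Psi-sim}. The genuine gap is in your reverse direction. The pointwise bound $\Phi_{f,q,\az}(Q)\ls\Psi_{f,q,\az}(CQ)$ for a single fixed dilate is false: dilating $Q$ does not remove the internal dyadic hyperplanes of the grid $\bigcup_k\mathscr{D}_k(CQ)$, and a pair $x,\,y$ at distance $2^{-j}\ell$ separated by such a hyperplane has no common dyadic cube of $CQ$ of level comparable to $j$ --- only the coarse ancestor. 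Concretely, take $Q=[0,1]^n$, $CQ$ the concentric dilate, and $f={\mathbf 1}_{\{x_1\ge1/2\}}$; the hyperplane $\{x_1=1/2\}$ is a face of dyadic cubes of $CQ$ at every level $k\ge1$, so $f$ is constant on each $I\in\mathscr{D}_k(CQ)$ with $k\ge1$ and $[\Psi_{f,q,\az}(CQ)]^q={\rm MO}_{f,q}(CQ)^q<\fz$, whereas $\int_Q\int_Q|f(x)-f(y)|^q|x-y|^{-n-q\az}\,dx\,dy\gtrsim\int_0^{1/2}s^{-q\az}\,ds=\fz$ once $\az\ge1/q$ (a range permitted by the theorem, and nonempty inside the nontrivial regime $\az\le n(\frac1q-\frac1p)$ when $n\ge2$). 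Even for $\az\in(0,1/q)$, where a pointwise equivalence $\Phi_{f,q,\az}(Q)\sim\Psi_{f,q,\az}(Q)$ does in fact hold (this is Lemma \ref{Psi=Phi}, with constants degenerating as $\az\to1/q$), it is not obtained by the covering claim you make; it requires transferring the oscillation of the straddling pairs to coarser generations, which is exactly where $\az<1/q$ enters. So your mechanism is broken for all $\az>0$, and the asserted inequality itself fails for $\az\ge1/q$.

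What the paper does instead is average over \emph{translates} of the grid rather than dilate: inequality \eqref{Psi2} bounds $\ell^{q\az-n}\int_Q\int_Q|f(x)-f(y)|^q|x-y|^{-n-q\az}\,dx\,dy$ by $[\Psi_{f,q,\az}(Q)]^q+\fint_{E_Q}[\Psi_{f,q,\az}(Q+z)]^q\,dz$, the point (see \eqref{k+fintk}) being that for any close pair $(x,y)$ a positive proportion of the translated grids put $x$ and $y$ into a common dyadic cube at the correct level. Upgrading this to the norm level then uses two ingredients absent from your plan: the Minkowski integral inequality with exponent $p/q\ge1$ --- this is the only place the hypothesis $q\le p$ is used, and your argument never invokes it --- and the observation that $\{Q_j+z\}_j$ is again a family in $\Pi_\ell$, so that $\sum_j|Q_j|[\Psi_{f,q,\az}(Q_j+z)]^p$ is controlled by $\|f\|^p_{\widetilde{JNQ}^{\az}_{p,q}(\rn)}$ uniformly in $z$. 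Lemma \ref{lem-partition} plays no role here (it is needed later, for Theorem \ref{thm-dyadic}). I would keep your forward direction and your $\az<0$ reduction, and replace the reverse direction by the translation-averaging argument.
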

\begin{proof}
Let $p,\ q,\ \az,$ and $f$ be as in the present theorem.
We claim that, for any cube $Q$ of $\rn$ with edge length $\ell(Q)\in(0,\fz)$,
\begin{align}\label{Psi1}
\lf[\Psi_{f,q,\az}(Q)\r]^q\ls\ell^{q\az-n}\int_{Q}\int_{Q}
\frac{|f(x)-f(y)|^q}{|x-y|^{n+q\az}}\,dx\,dy
\end{align}
and
\begin{align}\label{Psi2}
\ell^{q\az-n}\int_{Q}\int_{Q}
\frac{|f(x)-f(y)|^q}{|x-y|^{n+q\az}}\,dx\,dy
\ls\lf[\Psi_{f,q,\az}(Q)\r]^q
+\fint_{E_Q}\lf[\Psi_{f,q,\az}(Q+z)\r]^q\,dz,
\end{align}
where $Q+z:=\{x+z:\ x\in Q\}$,
$E_Q:=\{z\in\rn:\ |z|_\fz\le\ell(Q)\}$,
and the implicit positive constants are independent of $Q$.
Indeed, for the case $q=2$, this claim was proved in
\cite[Lemmas 5.3 and 5.4]{ejpx00} (see also
\cite[p.\,11, Theorem 1.5]{x19});
for any $q\in[1,\fz)$,
we present some details here for the sake of completeness.

We first show \eqref{Psi1}.
From \eqref{Psifqa} and Lemma \ref{osc-equ}(i),
it follows that
\begin{align}\label{psi4.3}
[\Psi_{f,q,\az}(Q)]^{q}
&=\sum\limits_{k=0}^{\fz}2^{(q\az-n)k}\sum\limits_{I\in\mathscr{D}_k(Q)}
\fint_{I}|f(x)-f_{I}|^{q}\,dx\notag\\
&\sim\sum\limits_{k=0}^{\fz}2^{(q\az-n)k}\sum\limits_{I\in\mathscr{D}_k(Q)}
\fint_{I}\fint_{I}|f(x)-f(y)|^{q}\,dx\,dy\notag\\
&\sim\sum\limits_{k=0}^{\fz}\sum\limits_{I\in\mathscr{D}_k(Q)}
\frac{2^{(q\az-n)k}}{(2^{-nk}|Q|)^{2}}
\int_{I}\int_{I}|f(x)-f(y)|^q\,dx\,dy\notag\\
&\sim\int_{\rn}\int_{\rn}k_{Q}(x,y)|f(x)-f(y)|^{q}\,dx\,dy,
\end{align}
where
\begin{align}\label{k}
k_{Q}(x,y):=
\sum\limits_{k=0}^{\fz}\sum\limits_{I\in\mathscr{D}_k(Q)}
2^{(q\az+n)k}|Q|^{-2}\mathbf{1}_{I}(x)\mathbf{1}_{I}(y).
\end{align}
Now, to finish the proof of \eqref{Psi1},
we consider the following two cases on $\az$.

\emph{Case} 1) $\az\in(-\frac{n}{q},\fz)$.
In this case, since
\begin{align*}
x,\ y\in I\in \mathscr{D}_k(Q)
&\Longrightarrow
|x-y|\le\sqrt{n}\ell(I)
=\sqrt{n}2^{-k}\ell(Q)\\
&\Longleftrightarrow
2^{k}\le \sqrt{n}\ell(Q)/|x-y|,
\end{align*}
it follows that, for any $x,\ y\in Q$,
\begin{align*}
k_{Q}(x,y)
&=\sum\limits_{k=0}^{\fz}\sum\limits_{I\in\mathscr{D}_k(Q)}
2^{(q\az+n)k}|Q|^{-2}\mathbf{1}_{I}(x)\mathbf{1}_{I}(y)\\
&\le \sum_{\{k\in\zz_+:\ 2^{k}\le \sqrt{n}\ell(Q)/|x-y|\}}
2^{(q\az+n)k}|Q|^{-2}
\ls\lf[\frac{\ell(Q)}{|x-y|}\r]^{q\az+n}|Q|^{-2}.
\end{align*}
Using this, the observation $k_{Q}(x,y)=0$ unless $x,\ y\in Q$,
and \eqref{psi4.3}, we obtain \eqref{Psi1} when $\az\in(-\frac{n}{q},\fz)$.

\emph{Case} 2) $\az\in(-\fz,-\frac{n}{q}]$.
We first claim that, if $\az\in(-\fz,0)$, then
\begin{align}\label{Psi-sim}
\Psi_{f,q,\az}(Q)\sim {\rm MO}_{f,q}(Q),
\end{align}
where ${\rm MO}_{f,q}(Q)$ is as in \eqref{MOfq},
and the positive equivalence constants are independent of
$f$ and $Q$.
Indeed, notice that, for any $I\in\mathscr{D}_1(Q)$,
by the Minkowski inequality and the H\"older inequality, we have
\begin{align*}
\lf[\fint_{I}\lf|f(x)-f_{Q}\r|^{q}\,dx\r]^\frac1q
&\le\lf[\fint_{I}\lf|f(x)-f_{I}\r|^{q}\,dx\r]^\frac1q
+\lf|f_{I}-f_{Q} \r|\\
&\le\lf[\fint_{I}\lf|f(x)-f_{Q}\r|^{q}\,dx\r]^\frac1q
+2\lf|f_{I}-f_{Q} \r|\\
&\le\lf[\fint_{I}\lf|f(x)-f_{Q}\r|^{q}\,dx\r]^\frac1q
+2\fint_{I}\lf|f(x)-f_{Q}\r|\,dx\\
&\le 3\lf[\fint_{I}\lf|f(x)-f_{Q}\r|^{q}\,dx\r]^\frac1q,
\end{align*}
which implies that
$$\lf[\fint_{I}\lf|f(x)-f_{Q}\r|^{q}\,dx\r]^\frac1q
\sim{\rm MO}_{f,q}(I)
+\lf|f_{I}-f_{Q} \r|,$$
and hence
\begin{align*}
\lf[{\rm MO}_{f,q}(Q)\r]^q
&=\frac1{|Q|}\sum_{I\in\mathscr{D}_1(Q)}
\int_{I}\lf|f(x)-f_{Q}\r|^{q}\,dx \\
&\sim2^{-n}\sum_{I\in\mathscr{D}_1(Q)}
\lf\{\lf[{\rm MO}_{f,q}(I)\r]^q
+\lf|f_{I}-f_{Q} \r|^q \r\}
\gtrsim 2^{-n}
\sum_{I\in\mathscr{D}_1(Q)}
\lf[{\rm MO}_{f,q}(I)\r]^q.
\end{align*}
From this and the mathematical induction,
it follows that, for any $k\in\nn$,
\begin{align}\label{MOk}
\lf[{\rm MO}_{f,q}(Q)\r]^q
\gtrsim 2^{-kn}
\sum_{I\in\mathscr{D}_k(Q)}
\lf[{\rm MO}_{f,q}(I)\r]^q.
\end{align}
Moreover, notice that
\begin{align*}
\Psi_{f,q,\az}(Q)
&=\lf[{\rm MO}_{f,q}(Q)\r]^q
+\sum_{k=1}^\fz \sum_{I\in\mathscr{D}_k(Q)}
2^{(q\az-n)k}\lf[{\rm MO}_{f,q}(I)\r]^q.
\end{align*}
This, combined with \eqref{MOk} and $\az\in(-\fz,0)$,
implies that
\begin{align*}
\lf[{\rm MO}_{f,q}(Q)\r]^q
&\le\Psi_{f,q,\az}(Q)
\ls\sum_{k=0}^\fz 2^{q\az k}\lf[{\rm MO}_{f,q}(Q)\r]^q
\sim\lf[{\rm MO}_{f,q}(Q)\r]^q,
\end{align*}
which shows that \eqref{Psi-sim} holds true.
Using \eqref{Psi-sim}, Lemma \ref{osc-equ}(i), and \eqref{az<-1/2},
we obtain \eqref{Psi1} when $\az\in(-\fz,-\frac{n}{q}]$.

Combining Cases 1) and 2), we find that \eqref{Psi1} holds true.

Next, we prove \eqref{Psi2}.
By \eqref{psi4.3} and the Fubini theorem, we obtain
\begin{align*}
&\fint_{E_Q}\lf[\Psi_{f,q,\az}(Q+z)\r]^q\,dz
=
\int_{\rn}\int_{\rn}\fint_{E_Q}
k_{Q+z}(x,y)\,dz\,|f(x)-f(y)|^{q}\,dx\,dy.
\end{align*}
Thus, to prove \eqref{Psi2},
it suffices to show that, for any given $x$, $y\in Q$,
\begin{align}\label{k+fintk}
\frac{\ell^{q\az-n}}{|x-y|^{n+q\az}}
\ls
k_{Q}(x,y)+\fint_{E_Q}k_{Q+z}(x,y)\,dz.
\end{align}
We first consider the case $x$, $y\in Q$ with
$|x-y|_{\fz}\le2^{-1}\ell(Q)$.
In this case, there exists an $m\in\zz_+$ such that
\begin{align}\label{|x-y|sim}
2^{-m-2}\ell(Q)<|x-y|_{\fz}\le2^{-m-1}\ell(Q).
\end{align}
Notice that, if $|z|_{\fz}>\ell(Q)$,
then $x\notin Q+z$ and hence $k_{Q+z}(x,y)=0$.
From this, \eqref{k}, and \eqref{|x-y|sim} we deduce that
\begin{align}\label{k-lowerbdd}
\fint_{E_Q}k_{Q+z}(x,y)\,dz
&=\frac{1}{|E_Q|}\int_{\rn}k_{Q+z}(x,y)\,dz\noz\\
&\gtrsim\frac{1}{|Q|}\int_{\rn}
\sum_{I\in\mathscr{D}_m(Q+z)}2^{(q\az+n)m}|Q|^{-2}
\mathbf{1}_{I}(x)\mathbf{1}_{I}(y)\,dz\noz\\
&\sim [\ell(Q)]^{-3n}
\lf[\frac{\ell(Q)}{|x-y|_\fz}\r]^{q\az+n}
\sum_{I\in\mathscr{D}_m(Q)}\int_{\rn}
\mathbf{1}_{I+z}(x)\mathbf{1}_{I+z}(y)\,dz\noz\\
&\sim\frac{[\ell(Q)]^{q\az-2n}}{|x-y|^{n+q\az}}
\sum_{I\in\mathscr{D}_m(Q)}\int_{\rn}
\mathbf{1}_{I+z}(x)\mathbf{1}_{I+z}(y)\,dz.
\end{align}
Observe that, for any $I\in\mathscr{D}_m(Q)$,
and $z$, $x$, $y\in\rn$,
$$\mathbf{1}_{I+z}(x)\mathbf{1}_{I+z}(y)
=\mathbf{1}_{I-x}(-z)\mathbf{1}_{I-y}(-z),$$
and hence
\begin{align*}
&\sum_{I\in\mathscr{D}_m(Q)}\int_{\rn}
\mathbf{1}_{I+z}(x)\mathbf{1}_{I+z}(y)\,dz\\
&\quad=\sum_{I\in\mathscr{D}_m(Q)}|(I-x)\cap(I-y)|
\ge\sum_{I\in\mathscr{D}_m(Q)}2^{-n}|I|
=2^{-n}|Q|,
\end{align*}
where the inequality holds true because
\eqref{|x-y|sim}\ $\Longrightarrow\ell(I)-|x-y|_\fz\ge2^{-1} \ell(I)
\Longrightarrow$ there exists some cube $\widetilde{I}$
with edge length $\frac12\ell(I)$ such that
$[(I-x)\cap(I-y)]\supset\widetilde{I}$.
By this and \eqref{k-lowerbdd}, we conclude that
\begin{align}\label{k+fintk1}
\fint_{E_Q}k_{Q+z}(x,y)\,dz\gtrsim
\frac{\ell^{q\az-n}}{|x-y|^{n+q\az}}.
\end{align}
We then consider the case $x$, $y\in Q$ with
$|x-y|_{\fz}>2^{-1}\ell(Q)$.
In this case, we have $|x-y|\sim\ell(Q)$ which,
together with \eqref{k} with $k=0$,
further implies that
$$
k_{Q}(x,y)\geq|Q|^{-2}\sim\frac{\ell^{q\az-n}}{|x-y|^{n+q\az}}.
$$
Combining this with \eqref{k+fintk1}, we obtain
\eqref{k+fintk}, and hence \eqref{Psi2} holds true.
Altogether, we complete the proofs of both \eqref{Psi1} and \eqref{Psi2},
and hence of the above claim.

Now, from \eqref{Psi1}, we deduce that, if $f\in JNQ^{\az}_{p,q}(\rn)$,
then
\begin{align*}
\|f\|_{\widetilde{JNQ}^{\az}_{p,q}(\rn)}
&=\sup_{\gfz{\ell\in(0,\fz)}{\{Q_j \}_{j}\in\Pi_\ell}}
\lf\{\sum_j |Q_j|\lf[\Psi_{f,q,\az}(Q_j)\r]^p\r\}^\frac1p\\
&\ls\sup_{\gfz{\ell\in(0,\fz)}{\{Q_j \}_{j}\in\Pi_\ell}}
\lf\{\sum_j |Q_j|\lf[|Q_j|^{\frac{q\az}n-1}\int_{Q_j}\int_{Q_j}
\frac{|f(x)-f(y)|^q}{|x-y|^{n+q\az}}\,dx\,dy\r]^\frac pq\r\}^\frac1p\\
&\sim\|f\|_{JNQ^{\az}_{p,q}(\rn)}<\fz
\end{align*}
and hence \eqref{Psi} holds true.
Conversely, if the measurable function $f$ satisfies \eqref{Psi},
then, by \eqref{Psi2}, $p/q\ge1$, and the Minkowski inequality,
we conclude that
\begin{align*}
\|f\|_{JNQ^{\az}_{p,q}(\rn)}
&=\sup_{\gfz{\ell\in(0,\fz)}{\{Q_j \}_{j}\in\Pi_\ell}}
\lf\{\sum_j |Q_j|\lf[|Q_j|^{\frac{q\az}n-1}\int_{Q_j}\int_{Q_j}
\frac{|f(x)-f(y)|^q}{|x-y|^{n+q\az}}\,dx\,dy\r]^\frac pq\r\}^\frac1p \\
&\ls\sup_{\gfz{\ell\in(0,\fz)}{\{Q_j \}_{j}\in\Pi_\ell}}
\lf\{\sum_j |Q_j|\lf[\Psi_{f,q,\az}(Q_j)\r]^p\r\}^\frac1p \\
&\quad+\sup_{\gfz{\ell\in(0,\fz)}{\{Q_j \}_{j}\in\Pi_\ell}}
\lf[\sum_j |Q_j|\lf\{\fint_{E_Q}
\lf[\Psi_{f,q,\az}(Q_j+z)\r]^q\,dz\r\}^{\frac pq} \r]^\frac1p \\
&\ls \sup_{\gfz{\ell\in(0,\fz)}{\{Q_j \}_{j}\in\Pi_\ell}}
\lf\{\sum_j |Q_j|\lf[\Psi_{f,q,\az}(Q_j)\r]^p\r\}^\frac1p \\
&\quad+\sup_{\gfz{\ell\in(0,\fz)}{\{Q_j \}_{j}\in\Pi_\ell}}
\fint_{E_Q}\lf\{\sum_j |Q_j|
\lf[\Psi_{f,q,\az}(Q_j+z)\r]^p\r\}^\frac1p\,dz  \\
&\ls\sup_{\gfz{\ell\in(0,\fz)}{\{Q_j \}_{j}\in\Pi_\ell}}
\lf\{\sum_j |Q_j|\lf[\Psi_{f,q,\az}(Q_j)\r]^p\r\}^\frac1p
\sim\|f\|_{\widetilde{JNQ}^{\az}_{p,q}(\rn)}<\fz.
\end{align*}
This finishes the proof of Theorem \ref{thm-mean-osc}.
\end{proof}

\subsection{Dyadic $JNQ$ Spaces}\label{subsec-Dya}

Following \cite[Section 7]{ejpx00},
we study the dyadic counterparts of $JNQ$ spaces
in this subsection.
In what follows, for any $k\in\zz$,
$\mathscr{D}_k(\rn)$ denotes the set of all dyadic cubes contained in $\rn$
of level $k$, and
$\mathscr{D}(\rn):=\bigcup_{k\in\zz}\mathscr{D}_k(\rn)$.
In addition, for any $x,\ y\in\rn$,
the \emph{dyadic distance $\delta(x,y)$} is defined by setting
$$\delta(x,y):=\inf\lf\{\ell(I):\ x,\ y\in I \in \mathscr{D}(\rn)\r\}.$$
Notice that the dyadic distance is infinite between points in different octants
and, for any $x,\ y\in Q\in\mathscr{D}(\rn)$,
\begin{align}\label{dzxy}
|x-y|\le\sqrt{n}\dz(x,y)\le\sqrt{n}\ell(Q).
\end{align}

\begin{definition}\label{def-dJNQ}
Let $p,\ q\in[1,\fz)$ and $\az\in\rr$.
The \emph{dyadic $JNQ$ space} $JNQ^{\az,{\rm dyadic}}_{p,q}(\rn)$
is defined to be the set of all measurable functions $f$ on $\rn$
such that
\begin{align*}
&\|f\|_{JNQ^{\az,{\rm dyadic}}_{p,q}(\rn)}
:=\sup_{k\in\zz}\lf\{\sum_{Q\in \mathscr{D}_k(\rn)}\lf|Q\r|
\lf[|Q|^{\frac{q\az}n-1}\int_{Q}\int_{Q}\frac{|f(x)-f(y)|^q}{[\delta(x,y)]^{n+q\az}}
\,dx\,dy \r]^\frac{p}q \r\}^{\frac{1}{p}}<\fz.
\end{align*}
\end{definition}

The following lemma is a slight variant (changing `$2$' into `$q$') of
\cite[Lemma 7.1]{ejpx00}; we omit the details here.

\begin{lemma}\label{Psidyasim}
Let $q\in[1,\fz)$ and $\az\in\rr$. Then there
exists a positive constant $C$ such that,
for any dyadic cube $Q\in\mathscr{D}(\rn)$
and any $f\in L^q(Q)$,
\begin{align*}
C^{-1}\Psi_{f,q,\az}(Q)
&\le
\lf[|Q|^{\frac{q\az}n-1}\int_{Q}\int_{Q}\frac{|f(x)-f(y)|^q}{[\delta(x,y)]^{n+q\az}}
\,dx\,dy \r]^\frac{1}q
\le C\Psi_{f,q,\az}(Q),
\end{align*}
where $\Psi_{f,q,\az}(Q)$ is the same as in \eqref{Psifqa}.
\end{lemma}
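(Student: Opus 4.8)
The plan is to reduce both quantities to weighted sums over dyadic levels and then to compare these sums. Fix a dyadic cube $Q\in\mathscr D(\rn)$, write $\beta:=n+q\az$, and, for each $k\in\zz_+$, set
$$G_k:=\sum_{I\in\mathscr D_k(Q)}\int_I\int_I|f(x)-f(y)|^q\,dx\,dy,$$
so that $0\le G_{k+1}\le G_k$. By Lemma \ref{osc-equ}(i) one has $\fint_I|f-f_I|^q\,dx\sim\fint_I\fint_I|f-f|^q$, whence, inserting $|I|=2^{-kn}|Q|$ into \eqref{Psifqa},
$$[\Psi_{f,q,\az}(Q)]^q\sim|Q|^{-2}\sum_{k=0}^\fz 2^{\beta k}G_k=:|Q|^{-2}S_2.$$
On the other hand, for $x,y\in Q$ the dyadic distance satisfies $\dz(x,y)=2^{-m}\ell(Q)$ exactly when $x,y$ lie in a common $J\in\mathscr D_m(Q)$ but in no common child of $J$; grouping the integral according to this first--separation level $m$ gives
$$|Q|^{\frac{q\az}n-1}\int_Q\int_Q\frac{|f(x)-f(y)|^q}{[\dz(x,y)]^{n+q\az}}\,dx\,dy=|Q|^{-2}\sum_{m=0}^\fz 2^{\beta m}(G_m-G_{m+1})=:|Q|^{-2}S_1.$$
Thus everything reduces to proving $S_1\sim S_2$.

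The upper estimate $S_1\le S_2$ is immediate, since $G_m-G_{m+1}\le G_m$ and $2^{\beta m}>0$; this already yields the right-hand inequality of the lemma for every $\az\in\rr$. For the reverse inequality I would split according to the sign of $\beta$. When $\az>-n/q$, i.e. $\beta>0$, summation by parts gives
$$S_1=2^{-\beta}G_0+(1-2^{-\beta})S_2,$$
and since $1-2^{-\beta}\in(0,1)$ this forces $(1-2^{-\beta})S_2\le S_1$, that is, $S_2\ls S_1$; hence $S_1\sim S_2$ and the lemma follows in this range.

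The main obstacle is the range $\az\le-n/q$ (so $\beta\le0$), where the weights $2^{\beta m}$ are non-increasing and the summation-by-parts identity has the wrong sign, so $S_2$ can a priori be far larger than $S_1$ and a termwise comparison is genuinely lossy. Here I would not compare the sums directly but instead exploit that $\beta\le0$ makes the dyadic kernel dominate the Euclidean one: by \eqref{dzxy}, $\dz(x,y)\ge|x-y|/\sqrt n$, and since $-(n+q\az)\ge0$ this gives $[\dz(x,y)]^{-(n+q\az)}\gs|x-y|^{-(n+q\az)}$, whence
$$|Q|^{\frac{q\az}n-1}\int_Q\int_Q\frac{|f(x)-f(y)|^q}{[\dz(x,y)]^{n+q\az}}\,dx\,dy\gs|Q|^{\frac{q\az}n-1}\int_Q\int_Q\frac{|f(x)-f(y)|^q}{|x-y|^{n+q\az}}\,dx\,dy.$$

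Running the computation \eqref{az<-1/2} on the cube $Q$ in place of the ball (which only uses that, for $x,y\in Q$, the set of $z\in Q$ with $\min\{|x-z|,|y-z|\}>\ell(Q)/4$ has measure $\gs|Q|$) bounds the last integral from below by $\fint_Q\fint_Q|f-f|^q\sim\fint_Q|f-f_Q|^q=[{\rm MO}_{f,q}(Q)]^q$. Finally, since $\az<0$, the equivalence $\Psi_{f,q,\az}(Q)\sim{\rm MO}_{f,q}(Q)$ established in \eqref{Psi-sim} identifies this with $[\Psi_{f,q,\az}(Q)]^q$, which closes the lower bound. The delicate point is precisely that the one-sided kernel comparison fails to be two-sided when $\beta\le0$, so the lower bound in this regime must be routed through the single-cube mean oscillation (and the already-established relations \eqref{az<-1/2} and \eqref{Psi-sim}) rather than through a direct comparison of $S_1$ and $S_2$.
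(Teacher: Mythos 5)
Your argument is correct, and it is worth noting that the paper itself gives no proof of this lemma (it simply cites \cite[Lemma 7.1]{ejpx00} and omits the details), so what you have written is a genuine, self-contained verification rather than a reproduction. The reduction to the two weighted sums $S_1=\sum_m 2^{\beta m}(G_m-G_{m+1})$ and $S_2=\sum_k 2^{\beta k}G_k$ via the first-separation level of $\delta(x,y)$ is exactly the right normalization (both prefactors do come out as $|Q|^{-2}$), the trivial bound $S_1\le S_2$ gives the right-hand inequality for all $\az$, and your treatment of the two regimes of $\beta=n+q\az$ is sound: for $\beta>0$ the Abel summation works, and for $\beta\le0$ you correctly recognize that a termwise comparison is hopeless and instead route the lower bound through the one-sided kernel domination $[\dz(x,y)]^{-(n+q\az)}\gs|x-y|^{-(n+q\az)}$, the cube version of \eqref{az<-1/2}, and the equivalence \eqref{Psi-sim}, all of which are available in the paper for exactly the range $\az\le-n/q<0$ you need. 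The only point I would tighten is the identity $S_1=2^{-\beta}G_0+(1-2^{-\beta})S_2$: as written it presupposes that the two series in the rearrangement converge (equivalently that the partial-sum boundary term $2^{\beta M}G_{M+1}$ vanishes), which is automatic when $S_2<\fz$ but needs a word when $S_2=\fz$. The cleanest fix is to avoid the identity altogether: since $f\in L^q(Q)$ forces $G_j\to0$, one has $G_j=\sum_{i\ge j}(G_i-G_{i+1})$, and Tonelli applied to the non-negative double sum gives $S_2=\sum_{j}(G_j-G_{j+1})\,(2^{\beta(j+1)}-1)/(2^{\beta}-1)\le(1-2^{-\beta})^{-1}S_1$ directly, with no boundary term to control. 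With that one-line repair the proof is complete.
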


As a consequence of Lemma \ref{Psidyasim},
we immediately have the following equivalent norm
on $JNQ^{\az,{\rm dyadic}}_{p,q}(\rn)$.
\begin{proposition}\label{prop-dyadic-norm}
Let $p,\ q\in[1,\fz)$ and $\az\in\rr$.
Then $f\in JNQ^{\az,{\rm dyadic}}_{p,q}(\rn)$ if and only if
$f$ is measurable on $\rn$ and
\begin{align*}
\|f\|_{\widetilde{JNQ}^{\az,{\rm dyadic}}_{p,q}(\rn)}
:=\sup_{k\in\zz} \lf\{\sum_{Q\in \mathscr{D}_k(\rn)}
|Q|\lf[\Psi_{f,q,\az}(Q)\r]^p\r\}^\frac1p<\fz,
\end{align*}
where $\Psi_{f,q,\az}(Q)$ is the same as in \eqref{Psifqa},
Moreover,
$\|\cdot\|_{\widetilde{JNQ}^{\az,{\rm dyadic}}_{p,q}(\rn)}
\sim\|\cdot\|_{JNQ^{\az,{\rm dyadic}}_{p,q}(\rn)}$.
\end{proposition}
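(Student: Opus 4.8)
The plan is to read off the equivalence directly from Lemma \ref{Psidyasim}, applied term by term inside the defining suprema. Observe that both norms have exactly the same outer structure, namely $\sup_{k\in\zz}\{\sum_{Q\in\mathscr{D}_k(\rn)}|Q|[A(Q)]^p\}^{1/p}$, and differ only in the inner factor $A(Q)$ attached to each dyadic cube $Q$: for $\|\cdot\|_{JNQ^{\az,{\rm dyadic}}_{p,q}(\rn)}$ we have $A(Q)=[|Q|^{q\az/n-1}\int_Q\int_Q|f(x)-f(y)|^q[\delta(x,y)]^{-n-q\az}\,dx\,dy]^{1/q}$, the integrand of Definition \ref{def-dJNQ}, whereas for $\|\cdot\|_{\widetilde{JNQ}^{\az,{\rm dyadic}}_{p,q}(\rn)}$ we have $A(Q)=\Psi_{f,q,\az}(Q)$. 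The two norms therefore run over the identical family of dyadic cubes, and the problem reduces to comparing the two inner factors cube by cube.

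First I would fix $k\in\zz$ and $Q\in\mathscr{D}_k(\rn)$ and invoke Lemma \ref{Psidyasim}, whose constant $C$ depends only on $n$, $q$, and $\az$, and in particular is independent of both $Q$ and $f$. This supplies the uniform two-sided bound $C^{-1}\Psi_{f,q,\az}(Q)\le A(Q)\le C\Psi_{f,q,\az}(Q)$ for every dyadic cube $Q$. Raising this to the power $p$, multiplying by $|Q|$, summing over $Q\in\mathscr{D}_k(\rn)$, taking the $p$-th root, and finally taking the supremum over $k\in\zz$, the uniform constant $C$ passes through each step unchanged (precisely because it does not depend on the cube), yielding
\begin{align*}
C^{-1}\|f\|_{\widetilde{JNQ}^{\az,{\rm dyadic}}_{p,q}(\rn)}
\le\|f\|_{JNQ^{\az,{\rm dyadic}}_{p,q}(\rn)}
\le C\|f\|_{\widetilde{JNQ}^{\az,{\rm dyadic}}_{p,q}(\rn)}.
\end{align*}
The asserted norm equivalence is then immediate, and the ``if and only if'' characterization of membership follows at once, because $f$ belongs to either space precisely when the corresponding (and now comparable) norm is finite.

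The only point needing a word of care is that Lemma \ref{Psidyasim} is phrased for $f\in L^q(Q)$, while the spaces themselves are defined for all measurable $f$ under the convention $\fint_I|f(x)-f_I|^q\,dx=\fz$ when $f\mathbf{1}_I\notin L^1(\rn)$. In this degenerate regime I would simply note that both $\Psi_{f,q,\az}(Q)$ and $A(Q)$ are simultaneously equal to $\fz$: the former by its very definition, and the latter by an argument parallel to Lemma \ref{osc-equ}(ii), which forces the double integral against $[\delta(x,y)]^{-n-q\az}$ to diverge as soon as $f$ fails to be integrable on some dyadic subcube of $Q$. Hence the pointwise comparison survives as an identity between two infinite values, no summand is lost, and the norm equivalence---and with it Proposition \ref{prop-dyadic-norm}---holds for every measurable $f$. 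I do not anticipate a genuine obstacle here: the entire content is the termwise estimate of Lemma \ref{Psidyasim}, and the only bookkeeping is to confirm that its constant is cube-independent so that it persists through summation and through the passage to the supremum over $k$.
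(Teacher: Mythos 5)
Your proof is correct and follows exactly the route the paper takes: the paper states Proposition \ref{prop-dyadic-norm} as an immediate consequence of the cube-wise, constant-uniform two-sided bound in Lemma \ref{Psidyasim}, which is precisely your termwise argument. Your extra remark on the degenerate case $f\mathbf{1}_I\notin L^1(\rn)$ is a harmless (and in fact welcome) bit of bookkeeping that the paper leaves implicit.
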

\begin{remark}
The following Lemma \ref{Psi=Phi} implies that,
if $p,\ q\in[1,\fz)$ and $\az\in(-\fz,1/q)$,
then
$$\|f\|_{\widetilde{JNQ}^{\az,{\rm dyadic}}_{p,q}(\rn)}
\sim
\sup_{k\in\zz} \lf\{\sum_{Q\in \mathscr{D}_k(\rn)}
|Q|\lf[\Phi_{f,q,\az}(Q)\r]^p\r\}^\frac1p$$
with $\Phi_{f,q,\az}$ be the same as in \eqref{Phifqa},
and the positive equivalence constants independent of $f$.
\end{remark}

Also, we have an analogue of Theorem \ref{thm-JNQ123}
for dyadic $JNQ$ spaces.
\begin{theorem}\label{thm-d123}
Let $p,\ q\in[1,\fz)$. Then dyadic $JNQ$ spaces
enjoy the following properties.
\begin{enumerate}
\item[\rm (i)] (Decreasing in $\alpha$)
If $-\fz<\az_1\le\az_2<\fz$, then
$JNQ^{\az_1,{\rm dyadic}}_{p,q}(\rn)\supset
JNQ^{\az_2,{\rm dyadic}}_{p,q}(\rn)$.

\item[\rm (ii)] (Triviality for large $\az$)
If $\az\in(n(\frac1q-\frac1p),\fz)$, then
$JNQ^{\az,{\rm dyadic}}_{p,q}(\rn)$
contains only functions that are almost everywhere
constant in each octant.

\item[\rm (iii)] (Triviality for negative $\az$)
If $\az_1,\ \az_2\in(-\fz,0)$,
then $JNQ^{\az_1,{\rm dyadic}}_{p,q}(\rn)
=JNQ^{\az_2,{\rm dyadic}}_{p,q}(\rn)$
with equivalent norms.
\end{enumerate}
\end{theorem}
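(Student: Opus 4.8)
The plan is to transcribe the proof of Theorem \ref{thm-JNQ123} into the dyadic setting, exploiting two structural features: every dyadic cube lies inside a single octant, and by \eqref{dzxy} one has $\delta(x,y)\le\sqrt n\,\ell(Q)$ whenever $x,y\in Q\in\mathscr{D}(\rn)$. First I would prove (i) exactly as in Theorem \ref{thm-JNQ123}(i). Fixing $\az_1\le\az_2$ and $Q\in\mathscr{D}_k(\rn)$, I split $[\delta(x,y)]^{-n-q\az_1}=[\delta(x,y)]^{-n-q\az_2}[\delta(x,y)]^{q(\az_2-\az_1)}$ and bound the last factor by $[\sqrt n\,\ell(Q)]^{q(\az_2-\az_1)}\sim|Q|^{\frac{q(\az_2-\az_1)}n}$, obtaining
\begin{align*}
|Q|^{\frac{q\az_1}n-1}\int_Q\int_Q\frac{|f(x)-f(y)|^q}{[\delta(x,y)]^{n+q\az_1}}\,dx\,dy
\ls|Q|^{\frac{q\az_2}n-1}\int_Q\int_Q\frac{|f(x)-f(y)|^q}{[\delta(x,y)]^{n+q\az_2}}\,dx\,dy.
\end{align*}
Raising to the power $p/q$, multiplying by $|Q|$, summing over $\mathscr{D}_k(\rn)$, and taking the supremum over $k\in\zz$ gives $\|f\|_{JNQ^{\az_1,{\rm dyadic}}_{p,q}(\rn)}\ls\|f\|_{JNQ^{\az_2,{\rm dyadic}}_{p,q}(\rn)}$, hence the inclusion.

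For (iii) I would avoid a direct comparison and instead reduce both norms to a single $\az$-free quantity. By Proposition \ref{prop-dyadic-norm}, $\|\cdot\|_{JNQ^{\az,{\rm dyadic}}_{p,q}(\rn)}\sim\|\cdot\|_{\widetilde{JNQ}^{\az,{\rm dyadic}}_{p,q}(\rn)}$, which is built from $\Psi_{f,q,\az}(Q)$; and \eqref{Psi-sim}, established in the proof of Theorem \ref{thm-mean-osc}, gives $\Psi_{f,q,\az}(Q)\sim{\rm MO}_{f,q}(Q)$ for every $\az\in(-\fz,0)$, with constants depending on $\az$, $n$, $q$ but not on $f$ or $Q$. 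Hence, for each fixed negative $\az$,
\begin{align*}
\|f\|_{JNQ^{\az,{\rm dyadic}}_{p,q}(\rn)}
\sim\sup_{k\in\zz}\lf\{\sum_{Q\in\mathscr{D}_k(\rn)}|Q|\lf[{\rm MO}_{f,q}(Q)\r]^p\r\}^\frac1p,
\end{align*}
and the right-hand side is manifestly independent of $\az$; comparing the cases $\az_1$ and $\az_2$ then yields (iii) with constants depending only on $\az_1,\az_2,n,q$.

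Part (ii) is the substantive step and, I expect, the main obstacle. Its conclusion is only a.e.\ constancy in \emph{each octant}, which is the natural outcome because $\delta(x,y)=\fz$ across distinct octants and each dyadic cube sits in one octant. I would fix an octant $O$ and exhaust it by an increasing sequence of dyadic cubes $Q_m\in\mathscr{D}_{-m}(\rn)$ with $Q_m\uparrow O$ and $|Q_m|=2^{mn}\to\fz$ (for the positive octant $Q_m=[0,2^m)^n$; the remaining octants use the analogous translated dyadic cubes). Testing the norm against the single cube $Q_m$ and unwinding the factors $|Q_m|^{\frac{q\az}n-1}$, $|Q_m|$, and the $p$-th-root normalization gives
\begin{align*}
\int_{Q_m}\int_{Q_m}\frac{|f(x)-f(y)|^q}{[\delta(x,y)]^{n+q\az}}\,dx\,dy
\le|Q_m|^{1-\frac{q\az}n-\frac qp}\|f\|^q_{JNQ^{\az,{\rm dyadic}}_{p,q}(\rn)}.
\end{align*}
The hypothesis $\az>n(\frac1q-\frac1p)$ is precisely $1-\frac{q\az}n-\frac qp<0$, so the right-hand side tends to $0$ as $m\to\fz$. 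Since the left-hand side is nondecreasing in $m$ (because $Q_m$ increases and the integrand is nonnegative) yet bounded above by a null sequence, it must vanish for every $m$; as $\delta$ is finite and the kernel strictly positive on $Q_m\times Q_m$, this forces $f(x)=f(y)$ for a.e.\ $(x,y)\in Q_m\times Q_m$, and Fubini then shows $f$ is constant a.e.\ on $Q_m$, hence a.e.\ on $O$.

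The two delicate points I would check carefully are: that the globally defined $\delta(x,y)$ agrees with its value seen inside $Q_m$, which holds because the smallest dyadic cube containing two points of $Q_m$ is itself contained in $Q_m$, so the single-cube estimate really controls the genuine integrand; and the exact computation of the exponent $1-\frac{q\az}n-\frac qp$, which is where the threshold $n(\frac1q-\frac1p)$ enters. Note that, unlike Theorem \ref{thm-JNQ123}(ii), no separate $\az\ge1$ argument is required here, since the dyadic statement asserts triviality only above the threshold $n(\frac1q-\frac1p)$.
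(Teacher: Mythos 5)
Your proposal is correct. Parts (i) and (iii) coincide with the paper's argument: (i) is the same splitting of $[\delta(x,y)]^{-n-q\az_1}$ using $\delta(x,y)\le\ell(Q)$ from \eqref{dzxy}, and (iii) is exactly the paper's one-line deduction from Proposition \ref{prop-dyadic-norm} together with \eqref{Psi-sim}. The only genuine divergence is in (ii). The paper passes to the $\Psi_{f,q,\az}$-norm via Proposition \ref{prop-dyadic-norm}, fixes an arbitrary dyadic cube $Q$, embeds it in dyadic ancestors $Q_m$ with $\ell(Q_m)=2^m\ell(Q)$, and retains only the single level-$m$ term of the sum in \eqref{Psifqa}, which yields
$\fz>\|f\|_{JNQ^{\az,{\rm dyadic}}_{p,q}(\rn)}\ge 2^{[\az-n(\frac1q-\frac1p)]m}\lf[\fint_Q|f(x)-f_Q|^q\,dx\r]^{1/q}$
and hence ${\rm MO}_{f,q}(Q)=0$ for every dyadic $Q$. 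You instead stay with the original double-integral norm, test it against a single exhausting sequence $Q_m\uparrow O$, and combine the decay of $|Q_m|^{1-\frac{q\az}n-\frac qp}$ with the monotonicity of the integral in $m$ to force the entire integral over $Q_m\times Q_m$ to vanish; the a.e.\ positivity of the kernel off the diagonal (which holds for either sign of $n+q\az$, since $\delta(x,y)\ge|x-y|_\fz>0$ for $x\neq y$) then gives constancy on $O$. Both mechanisms rest on the same exponent identity $1-\frac{q\az}n-\frac qp<0\Longleftrightarrow\az>n(\frac1q-\frac1p)$; yours avoids the $\Psi$-characterization entirely (so it does not need Lemma \ref{Psidyasim}) at the cost of the monotonicity step, which you handle correctly, while the paper's version localizes the conclusion to each fixed dyadic cube directly. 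Your two flagged ``delicate points'' are indeed the right ones to check, and both check out.
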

\begin{proof}
First, using \eqref{dzxy},
we find that, for any $\az_1,\ \az_2\in\rr$
with $\az_1\le\az_2$, any $f\in JNQ^{\az_2,{\rm dyadic}}_{p,q}(\rn)$,
and any dyadic cube $Q\in\mathscr{D}(\rn)$,
\begin{align*}
&|Q|^{\frac{q\az_1}{n}-1}\int_{Q}\int_{Q}
\frac{|f(x)-f(y)|^q}{[\delta(x,y)]^{n+q\az_1}}
\,dx\,dy\\
&\quad=|Q|^{\frac{q\az_1}{n}-1}\int_{Q}\int_{Q}
\frac{|f(x)-f(y)|^q}{[\delta(x,y)]^{n+q\az_2}}
\lf[\delta(x,y)\r]^{q(\az_2-\az_1)}\,dx\,dy\\
&\quad\leq [\ell(Q)]^{q\az_1-n}\int_{Q}\int_{Q}
\frac{|f(x)-f(y)|^q}{[\delta(x,y)]^{n+q\az_2}}
[\ell(Q)]^{q(\az_2-\az_1)}\,dx\,dy\\
&\quad=|Q|^{\frac{q\az_2}{n}-1}\int_{Q}\int_{Q}
\frac{|f(x)-f(y)|^q}{[\delta(x,y)]^{n+q\az_2}}
\,dx\,dy,
\end{align*}
which implies that
$
\|f\|_{JNQ^{\az_1,{\rm dyadic}}_{p,q}(\rn)}
\leq
\|f\|_{JNQ^{\az_2,{\rm dyadic}}_{p,q}(\rn)}
$
and hence
$$JNQ^{\az_1,{\rm dyadic}}_{p,q}(\rn)
\supset JNQ^{\az_2,{\rm dyadic}}_{p,q}(\rn).$$
This shows (i) of the present theorem.

Next, we prove (ii) of the present theorem.
Let $\az\in(n(\frac1q-\frac1p),\fz)$,
$f\in JNQ^{\az,{\rm dyadic}}_{p,q}(\rn)$,
and $Q\in\mathscr{D}(\rn)$.
Suppose that $Q_m$ is a dyadic cube satisfying $Q_m\supset Q$
and $\ell(Q_m)=2^m \ell(Q)$.
From Proposition \ref{prop-dyadic-norm} and \eqref{Psifqa},
it follows that
\begin{align*}
\fz&>\|f\|_{JNQ^{\az,{\rm dyadic}}_{p,q}(\rn)}
\ge|Q_m|^{\frac1p}\Psi_{f,q,\az}(Q_m)
\geq
2^{[\az-n(\frac1q-\frac1p)]m}
\lf[\fint_{Q}|f(x)-f_Q|^q\,dx\r]^\frac1q.
\end{align*}
Letting $m\to\fz$, we obtain $\fint_{Q}|f(x)-f_Q|^q\,dx=0$,
which implies that $f$ is a constant almost everywhere in each octant,
and hence (ii) holds true.

Finally, (iii) follows from
Proposition \ref{prop-dyadic-norm} and
\eqref{Psi-sim},
which completes the proof of Theorem \ref{thm-d123}.
\end{proof}

\begin{remark}\label{rem-dSharp}
Let $p,\ q\in[1,\fz)$ and $\az_0:=n(\frac1q-\frac1p)$.
Then, corresponding to Proposition \ref{prop-Wsp},
we claim that
$JNQ^{\az_0,{\rm dyadic}}_{p,q}(\rn)\supset W^{\az_0,q}(\rn)$
if
$\az_0\ge0$.
Indeed, by \eqref{Psi1} and the observation
$\az_0\ge0\Longleftrightarrow p/q\ge1$,
we obtain
\begin{align*}
&\sup_{k\in\zz} \lf\{\sum_{Q\in \mathscr{D}_k(\rn)}
|Q|\lf[\Psi_{f,q,\az_0}(Q)\r]^p\r\}^\frac1p\\
&\quad\ls\sup_{k\in\zz}\lf[\sum_{Q\in \mathscr{D}_k(\rn)}
|Q|\lf\{[\ell(Q)]^{q\az_0-n}
\int_{Q}\int_{Q}
\frac{|f(x)-f(y)|^q}{|x-y|^{n+q\az_0}}\,dx\,dy\r\}^
{\frac{p}{q}}\r]^\frac1p\\
&\quad\sim\sup_{k\in\zz}\lf[\sum_{Q\in \mathscr{D}_k(\rn)}
\lf\{\int_{Q}\int_{Q}
\frac{|f(x)-f(y)|^q}{|x-y|^{n+q\az_0}}\,dx\,dy\r\}^
{\frac{p}{q}}\r]^\frac1p \\
&\quad\ls\sup_{k\in\zz}\lf[\sum_{Q\in \mathscr{D}_k(\rn)}
\int_{Q}\int_{Q}\frac{|f(x)-f(y)|^q}{|x-y|^{n+q\az_0}}\,dx\,dy\r]^\frac1q\\
&\quad\sim\lf[\int_{\rn}\int_{\rn}
\frac{|f(x)-f(y)|^q}{|x-y|^{n+q\az_0}}\,dx\,dy\r]^\frac1q,
\end{align*}
which, together with Proposition \ref{prop-dyadic-norm},
further implies that
$\|f\|_{JNQ^{\az_0,{\rm dyadic}}_{p,q}(\rn)}
\ls\|f\|_{W^{\az_0,q}(\rn)}$.
This shows the above claim,
and hence, if $\az_0\in(0,1)$, then
\begin{align}\label{neq0}
JNQ^{\az_0,{\rm dyadic}}_{p,q}(\rn)
\neq \{{\rm a.\,e.\ constant\ in\ each\ octant}\}.
\end{align}
Moreover, similarly to the proof of Theorem \ref{thm-clas},
we find that \eqref{neq0} is false for any $\az_0\in(-\fz,0]$.
However, when $\az_0\in[1,\fz)$,
it is interesting to ask whether or not
\eqref{neq0} still holds true,
which is still \emph{unclear} so far.
\end{remark}

Ess\'en et al. in \cite[Theorem 7.9]{ejpx00} used $\Psi_{f,q,\az}(Q)$
to establish the following relation between the space $Q$
and its dyadic analogue $Q_{\az}^{\rm d}(\rn)$, that is, for any $\az\in(-\fz,1/2)$,
$$Q_{\az}(\rn)=\lf[Q_{\az}^{\rm d}(\rn)\cap{\rm BMO}(\rn)\r].$$
Now, we establish a corresponding result for the space $JNQ$
as follows.
Notice that we only need to consider the case $p\ge q$,
otherwise $p<q$ and hence both $JNQ^{\az}_{p,q}(\rn)$ and $JN^{\rm con}_{p,q}(\rn)$
are trivial due to Theorem \ref{thm-clas}(i) and Corollary \ref{coro-JN=c}.

\begin{theorem}\label{thm-dyadic}
Let $1\le q\le p<\fz$ and $\az\in(-\fz,1/q)$.
Then
$$JNQ^{\az}_{p,q}(\rn)=
\lf[JNQ^{\az,{\rm dyadic}}_{p,q}(\rn)\cap JN^{\rm con}_{p,q}(\rn)\r].$$
\end{theorem}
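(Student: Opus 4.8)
The plan is to prove the set equality by establishing the two inclusions, working throughout with the mean-oscillation descriptions rather than the original definitions: by Theorem \ref{thm-mean-osc} we have $\|\cdot\|_{JNQ^\az_{p,q}(\rn)}\sim\|\cdot\|_{\widetilde{JNQ}^\az_{p,q}(\rn)}$ with the gauge $\Psi_{f,q,\az}$ of \eqref{Psifqa}, and by Proposition \ref{prop-dyadic-norm} we have $\|\cdot\|_{JNQ^{\az,{\rm dyadic}}_{p,q}(\rn)}\sim\sup_{k\in\zz}\{\sum_{Q\in\mathscr{D}_k(\rn)}|Q|[\Psi_{f,q,\az}(Q)]^p\}^{1/p}$. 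The inclusion $\subseteq$ is then almost immediate. First, since $\mathscr{D}_0(Q)=\{Q\}$, the $k=0$ summand of $[\Psi_{f,q,\az}(Q)]^q$ equals $[{\rm MO}_{f,q}(Q)]^q$ as in \eqref{MOfq}, so $\Psi_{f,q,\az}(Q)\ge{\rm MO}_{f,q}(Q)$ for every cube $Q$; taking the supremum over all families in $\Pi_\ell$ yields $\|f\|_{JN^{\rm con}_{p,q}(\rn)}\le\|f\|_{\widetilde{JNQ}^\az_{p,q}(\rn)}$. Second, for each $k\in\zz$ the honest dyadic grid $\mathscr{D}_k(\rn)$ is itself a member of $\Pi_{2^{-k}}$, so the dyadic supremum is dominated by the full congruent supremum, giving $\|f\|_{JNQ^{\az,{\rm dyadic}}_{p,q}(\rn)}\ls\|f\|_{JNQ^\az_{p,q}(\rn)}$. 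Together these produce $JNQ^\az_{p,q}(\rn)\subseteq JNQ^{\az,{\rm dyadic}}_{p,q}(\rn)\cap JN^{\rm con}_{p,q}(\rn)$.

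For the reverse inclusion, I fix $f$ in the intersection and, by Theorem \ref{thm-mean-osc}, it suffices to bound $\sum_j|Q_j|[\Psi_{f,q,\az}(Q_j)]^p$ by $C(\|f\|^p_{JNQ^{\az,{\rm dyadic}}_{p,q}(\rn)}+\|f\|^p_{JN^{\rm con}_{p,q}(\rn)})$, uniformly over arbitrary families $\{Q_j\}_j\in\Pi_\ell$. The conceptual point guiding the argument is that the dyadic norm only controls oscillation organized by the standard grid, whereas a general $Q_j$ may straddle dyadic hyperplanes; its coarsest-scale oscillation ${\rm MO}_{f,q}(Q_j)$ is exactly the misaligned part invisible to honest dyadic cubes of comparable size, and this is precisely what the factor $JN^{\rm con}_{p,q}(\rn)$ is there to supply. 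This also explains why the intersection, and not the dyadic space alone, is the correct object.

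To make this quantitative I would choose $m\in\zz$ with $2^{-m-1}<\ell\le2^{-m}$ and invoke the geometric classification of Lemma \ref{lem-partition} to organize $\{Q_j\}_j$ against $\mathscr{D}_m(\rn)$: to each $Q_j$ one associates a family $\mathcal{R}_j\subset\mathscr{D}(\rn)$ of dyadic cubes of edge length comparable to $\ell$ that meet $Q_j$, with $\sharp\mathcal{R}_j\le C(n)$ and with bounded overlap, i.e. each honest dyadic cube lies in $\mathcal{R}_j$ for at most $C(n)$ indices $j$. The heart of the proof is then the per-cube comparison $[\Psi_{f,q,\az}(Q_j)]^q\ls[{\rm MO}_{f,q}(Q_j)]^q+\sum_{R\in\mathcal{R}_j}[\Psi_{f,q,\az}(R)]^q$, obtained by matching the relative dyadic subcubes $\mathscr{D}_k(Q_j)$ against the honest dyadic subcubes of the $R\in\mathcal{R}_j$: grid misalignment forces each straddling relative subcube to be split into its honest dyadic pieces, and the resulting differences of averages are absorbed either into the finer quantities $\Psi_{f,q,\az}(R)$ or into the coarse term ${\rm MO}_{f,q}(Q_j)$. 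This adapts the single-cube mechanism behind \cite[Theorem 7.9]{ejpx00} (there $q=2$ and one cube) to general $q$ and, crucially, to collections.

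Granting the comparison, I would raise it to the power $p/q\ge1$, use $(a+b)^{p/q}\ls a^{p/q}+b^{p/q}$ together with $\sharp\mathcal{R}_j\le C(n)$ in the form $(\sum_{R\in\mathcal{R}_j}[\Psi_{f,q,\az}(R)]^q)^{p/q}\ls\sum_{R\in\mathcal{R}_j}[\Psi_{f,q,\az}(R)]^p$, to obtain $\sum_j|Q_j|[\Psi_{f,q,\az}(Q_j)]^p\ls\sum_j|Q_j|[{\rm MO}_{f,q}(Q_j)]^p+\sum_j|Q_j|\sum_{R\in\mathcal{R}_j}[\Psi_{f,q,\az}(R)]^p$. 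The first sum is at most $\|f\|^p_{JN^{\rm con}_{p,q}(\rn)}$ since $\{Q_j\}_j\in\Pi_\ell$. For the second, using $|Q_j|\sim|R|$ and the bounded overlap I would reindex by $R\in\mathscr{D}(\rn)$ over the finitely many dyadic scales nearest $\ell$, bounding it by a constant multiple of $\sum_{R\in\mathscr{D}_m(\rn)}|R|[\Psi_{f,q,\az}(R)]^p$, hence by $\|f\|^p_{JNQ^{\az,{\rm dyadic}}_{p,q}(\rn)}$ via Proposition \ref{prop-dyadic-norm}. Taking the supremum over $\ell$ and $\{Q_j\}_j$ completes $\supseteq$. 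I expect the main obstacle to be exactly the per-cube comparison together with the bookkeeping inside Lemma \ref{lem-partition}: controlling how the relative dyadic tree of a non-aligned cube splits across the standard grid, and verifying that the difference-of-averages terms generated at each straddled scale are dominated by ${\rm MO}_{f,q}(Q_j)$ and by the $\Psi_{f,q,\az}(R)$ uniformly in $j$ and $\ell$, so that the overlap constants stay independent of the family.
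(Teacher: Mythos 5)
Your overall architecture---pass to the gauge $\Psi_{f,q,\az}$ via Theorem \ref{thm-mean-osc} and Proposition \ref{prop-dyadic-norm}, get the easy inclusion by comparing suprema, and get the hard inclusion from a per-cube comparison against honest dyadic cubes of comparable scale plus a coarse congruent oscillation, summed with the help of Lemma \ref{lem-partition}---is the same as the paper's, and your easy inclusion is correct. However, the central per-cube comparison you propose,
\begin{equation*}
\lf[\Psi_{f,q,\az}(Q_j)\r]^q\ls\lf[{\rm MO}_{f,q}(Q_j)\r]^q+\sum_{R\in\mathcal{R}_j}\lf[\Psi_{f,q,\az}(R)\r]^q,
\end{equation*}
with the coarse term taken over $Q_j$ itself and $\mathcal{R}_j$ consisting of dyadic cubes of edge length comparable to $\ell$, is false in general for $\az\in(0,1/q)$. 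Take $n=1$, $f=\mathbf{1}_{[1,\fz)}$, $Q_j=[1-\ez,2-\ez]$ with $\ez$ small, and $\mathcal{R}_j=\{[0,1],[1,2]\}$. Then $f$ is constant almost everywhere on each $R$, so $\Psi_{f,q,\az}(R)=0$, and $[{\rm MO}_{f,q}(Q_j)]^q\sim\ez$; but at each level $k\le\log_2(1/\ez)$ the unique relative dyadic subinterval of $Q_j$ containing the jump point $1$ contributes about $2^{(q\az-1)k}\cdot\ez 2^{k}=\ez 2^{q\az k}$ to $[\Psi_{f,q,\az}(Q_j)]^q$, whence $[\Psi_{f,q,\az}(Q_j)]^q\sim\ez^{1-q\az}\gg\ez$ as $\ez\to0$. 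The obstruction is structural: $\Phi_{f,q,\az}(Q_j)$ contains off-diagonal contributions from pairs $x\in R$, $y\in R'$ with $R\neq R'$, and when $Q_j$ meets some $R'$ only in a thin sliver these contributions are invisible both to the diagonal quantities $\Psi_{f,q,\az}(R)$ and to ${\rm MO}_{f,q}(Q_j)$.

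The paper's proof repairs exactly this point. Using Lemma \ref{Psi=Phi} together with the almost increasing property of $\Phi_{f,q,\az}$ (this is precisely where the hypothesis $\az<1/q$ enters), one first enlarges $Q_j$ to its dominated cube $Q_j^\flat=\bigcup_{i=1}^{2^n}Q_j^{(i)}$ of Definition \ref{def-domi}; the decomposition of $[\Psi_{f,q,\az}(Q_j^\flat)]^q$ then produces the coarse term $\fint_{Q_j^\flat}|f(x)-f_{Q_j^\flat}|^q\,dx$, i.e.\ the mean oscillation over the whole dominated cube, which does absorb the off-diagonal contributions (in the example above it is of order $1$, not of order $\ez$). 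This correction forces a second one in your final summation: since the dominated cubes $Q_j^\flat$ overlap, the bound $\sum_j|Q_j^\flat|[{\rm MO}_{f,q}(Q_j^\flat)]^p\ls\|f\|^p_{JN^{\rm con}_{p,q}(\rn)}$ cannot be justified merely by ``$\{Q_j\}_j\in\Pi_\ell$''; one needs the full content of Lemma \ref{lem-partition}, namely the splitting into $6^n$ subfamilies within each of which the $Q_j^\flat$ are pairwise disjoint with equal edge length, so that each subfamily is a legitimate competitor in $\Pi_{2^{-k+1}}$. With these two modifications your outline coincides with the paper's argument; as written, the key inequality it rests on does not hold.
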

To prove Theorem \ref{thm-dyadic},
we need the following geometrical lemma
(Lemma \ref{lem-partition} below)
which is a refinement of both \cite[Lemma 2.4]{tyyACV}
and \cite[Lemma 2.5]{jtyyz21}.
In what follows, for any set $A$ of $\rn$,
we use $\overline{A}$ to denote its \emph{closure};
moreover, two sets $A$ and $B$ are said to be \emph{mutually adjacent}
if $\overline{A}\cap\overline{B}\neq\emptyset$.

\begin{definition}\label{def-domi}
Let $k\in\zz$ and $Q$ be a cube of $\rn$ with edge length
$\ell\in(2^{-k-1},2^{-k}]$.
Observe that there exist mutually adjacent dyadic cubes
$\{Q^{(j)}\}_{j=1}^{2^n}\subset\mathscr{D}_k(\rn)$,
with $Q^{(1)}$ being the left and lower one in
$\{Q^{(j)}\}_{j=1}^{2^n}$,
such that
$Q\cap Q^{(1)}\neq\emptyset$
and
$Q\subset\bigcup_{j=1}^{2^n}Q^{(j)}$;
see the figure below for two examples of cubes when $n=2$.
Then the \emph{dominated cube} of $Q$, denoted by $Q^\flat$,
is defined by setting $Q^\flat:=\bigcup_{j=1}^{2^n}Q^{(j)}$.
Moreover, $Q^{(1)}$ is called the {\em lefter and lower cube} of $Q$.
\begin{center}
\begin{tikzpicture} [scale = 1.6]
\draw [thick, ->=5pt] (2.5,3)--(9.5,3);
\draw [thick, ->=5pt] (3,2.5)--(3,9.5);

\draw[thick] (3,9)--(9,9)--(9,3);
\draw[thick] (3,4.5)--(9,4.5);
\draw[thick] (4.5,3)--(4.5,9);
\draw[thick] (3,6)--(9,6);
\draw[thick] (6,3)--(6,9);
\draw[thick] (3,7.5)--(9,7.5);
\draw[thick] (7.5,3)--(7.5,9);
{\red\draw[thick] (4,4)--(4,5.5)--(5.5,5.5)--(5.5,4)--(4,4);}
{\blue\draw[thick] (6.5,6.5)--(6.5,8)--(8,8)--(8,6.5)--(6.5,6.5);}

\node [above right] at (3,3) {$Q^{(1)}$};
\node [below right] at (3,6) {$Q^{(2)}$};
\node [below left] at (6,6) {$Q^{(3)}$};
\node [above left] at (6,3) {$Q^{(4)}$};
{\red \node [above right] at (4,4) {$Q$};}

\node [above right] at (6,6) {$\widetilde{Q}^{(1)}$};
\node [below right] at (6,9) {$\widetilde{Q}^{(2)}$};
\node [below left] at (9,9) {$\widetilde{Q}^{(3)}$};
\node [above left] at (9,6) {$\widetilde{Q}^{(4)}$};
{\blue \node [above right] at (6.5,6.5) {$\widetilde{Q}$};}

\node [below right] at (3,3) {$\mathbf{0}$};
\node at (3,3) {$\bullet$};
\end{tikzpicture}
\end{center}
\end{definition}

\begin{lemma}\label{lem-partition}
Let $k\in\zz$ and $\{Q_j\}_{j\in\nn}\in\Pi_\ell$
with $\ell\in(2^{-k-1},2^{-k}]$.
Then $\{Q_j\}_{j\in\nn}$ can be divided into
$6^n$ subfamilies $\{\cq_m\}_{m=1}^{6^n}$
(may be empty for some $m$) such that
\begin{enumerate}
\item[\rm (i)] $\{Q_j\}_{j\in\nn}=\bigcup_{m=1}^{6^n}\cq_m$;

\item[\rm (ii)] for any $m\in\{1,\dots,6^n\}$,
all cubes in $\{Q^\flat:\ Q\in\cq_m\}$ are mutually disjoint,
where $Q^\flat$ is as in Definition \ref{def-domi}.
\end{enumerate}
\end{lemma}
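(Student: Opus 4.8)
The plan is to read off from the dyadic position of each $Q_j$ two labels whose product takes exactly $6^n$ values, and then to show that cubes carrying the same label have disjoint dominated cubes. First I would fix $k$ and, for each $Q_j$ with lower-left corner $\mathbf{a}^{(j)}=(a_1^{(j)},\dots,a_n^{(j)})$, set $m_c^{(j)}:=\lfloor a_c^{(j)}2^{k}\rfloor$ for $c\in\{1,\dots,n\}$ and $\mathbf{m}^{(j)}:=(m_1^{(j)},\dots,m_n^{(j)})\in\zz^n$. Then $Q_j^{(1)}=\prod_{c=1}^{n}[m_c^{(j)}2^{-k},(m_c^{(j)}+1)2^{-k})$ is the lefter and lower cube of $Q_j$, and, since $\ell\le 2^{-k}$, one has $Q_j\subset Q_j^\flat=\prod_{c=1}^{n}[m_c^{(j)}2^{-k},(m_c^{(j)}+2)2^{-k})$, so that $Q_j^{(1)}$ and $Q_j^\flat$ are exactly as in Definition \ref{def-domi}. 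To $Q_j$ I attach the residue vector $\mathbf{r}^{(j)}:=(m_1^{(j)}\bmod 3,\dots,m_n^{(j)}\bmod 3)\in\{0,1,2\}^n$ and the half-position vector $\mathbf{h}^{(j)}\in\{0,1\}^n$ given by $h_c^{(j)}:=0$ when $a_c^{(j)}<m_c^{(j)}2^{-k}+2^{-k-1}$ and $h_c^{(j)}:=1$ otherwise. The pair $(\mathbf{r}^{(j)},\mathbf{h}^{(j)})$ ranges over a set of cardinality $3^n\cdot 2^n=6^n$; letting $\cq_m$ consist of all $Q_j$ carrying the $m$-th value of this pair yields a decomposition $\{Q_j\}_{j\in\nn}=\bigcup_{m=1}^{6^n}\cq_m$, which is (i).

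Next I would record the two elementary facts behind (ii). The first is purely dyadic: two flats with lower-left indices $\mathbf{m},\widetilde{\mathbf{m}}\in\zz^n$ satisfy $Q^\flat\cap\widetilde{Q}^\flat\ne\emptyset$ if and only if $|m_c-\widetilde{m}_c|\le 1$ for every $c$, since each flat is the half-open cube $\prod_{c}[m_c2^{-k},(m_c+2)2^{-k})$ of edge length $2^{-k+1}$. The second fact is the crux and is where the hypothesis $\ell>2^{-k-1}$ enters: if $Q_i\ne Q_j$ have the same lefter and lower cube (that is, $\mathbf{m}^{(i)}=\mathbf{m}^{(j)}$) and disjoint interiors, then $\mathbf{h}^{(i)}\ne\mathbf{h}^{(j)}$. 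Indeed, if instead $\mathbf{h}^{(i)}=\mathbf{h}^{(j)}$, then in each coordinate $c$ the numbers $a_c^{(i)}$ and $a_c^{(j)}$ lie in one common half, of length $2^{-k-1}$, of the interval $[m_c2^{-k},(m_c+1)2^{-k})$, whence $|a_c^{(i)}-a_c^{(j)}|<2^{-k-1}<\ell$; consequently the one-dimensional projections $[a_c^{(i)},a_c^{(i)}+\ell]$ and $[a_c^{(j)},a_c^{(j)}+\ell]$ overlap for every $c$, so the interiors of $Q_i$ and $Q_j$ meet, a contradiction. In particular the half-position map is injective on any family of cubes with a common flat, so at most $2^n$ of the $Q_j$ can share one flat.

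Finally I would combine the two labels. Fix $m$ and take distinct $Q_i,Q_j\in\cq_m$; I claim $Q_i^\flat\cap Q_j^\flat=\emptyset$. If not, the first fact gives $|m_c^{(i)}-m_c^{(j)}|\le 1$ for all $c$; but $Q_i,Q_j\in\cq_m$ share the residue vector, so $m_c^{(i)}\equiv m_c^{(j)}\pmod 3$, and the only element of $\{-1,0,1\}$ divisible by $3$ is $0$, forcing $m_c^{(i)}=m_c^{(j)}$ for every $c$ and hence $Q_i^{(1)}=Q_j^{(1)}$ and $Q_i^\flat=Q_j^\flat$. Since $Q_i,Q_j\in\cq_m$ also share the half-position vector, the second fact then yields $Q_i=Q_j$, contradicting $Q_i\ne Q_j$. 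Thus the cubes in $\{Q^\flat:\ Q\in\cq_m\}$ are pairwise disjoint, which is (ii). I expect the main obstacle to be the second geometric fact: choosing the right $\{0,1\}^n$-valued invariant so that the lower bound $\ell>2^{-k-1}$ converts ``same half in every coordinate'' into a genuine overlap of interiors, and then dovetailing it with the $\bmod 3$ coloring of the indices so that the two invariants together produce precisely $6^n$ subfamilies with pairwise disjoint dominated cubes---rather than the cruder counts obtained in \cite[Lemma 2.4]{tyyACV} and \cite[Lemma 2.5]{jtyyz21}.
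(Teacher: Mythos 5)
Your proof is correct and follows essentially the same route as the paper's: a $\bmod\,3$ coloring of the level-$k$ dyadic grid yielding $3^n$ classes (the paper's sparse subfamilies $\mathscr{S}_1,\dots,\mathscr{S}_{3^n}$), combined with a further $2^n$-fold splitting of the cubes sharing a common lefter and lower cube, for a total of $6^n$ subfamilies. The only difference is that you make the second splitting explicit via the half-position vector and prove its injectivity from $\ell>2^{-k-1}$, whereas the paper merely observes that at most $2^n$ of the $Q_j$ can share a given lefter and lower cube; your version is a slightly more constructive rendering of the same argument.
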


\begin{proof}
Let all the symbols be as in the present lemma.
Divide $\mathscr{D}_k(\rn)$ into $3^n$ sparse subfamilies
as follows:
\begin{align*}
\mathscr{S}_1:=&\lf\{[0,2^{-k})^n+3\cdot2^{-k}\mathbf{j}:\
\mathbf{j}\in\zz^n\r\},\\
\mathscr{S}_2:=&\mathscr{S}_1+(2^{-k},0,\ldots,0),\\
\mathscr{S}_3:=&\mathscr{S}_1+(2\cdot2^{-k},0,\ldots,0),\\
\vdots\\
\mathscr{S}_{3^n}:=&\mathscr{S}_1+(2\cdot2^{-k},2\cdot2^{-k},\ldots,2\cdot2^{-k}).
\end{align*}
By some geometrical observations, we find that,
if $Q_1\in\mathscr{S}_k$ and $Q_2\in\mathscr{S}_l$ with $k\neq l$,
then
\begin{align}\label{empty0}
\overline{Q_1}\cap\overline{Q_2}=\emptyset;
\end{align}
moreover, for any cubes $Q_k,\ Q_l\in\{Q_j\}_{j\in\nn}$,
\begin{align}\label{empty1}
Q_k^\flat\cap Q_l^\flat=\emptyset
\Longleftrightarrow
\overline{Q_k^{(1)}}\cap\overline{Q_l^{(1)}}=\emptyset
\end{align}
(see, for instance, the cubes $Q$ and $\widetilde{Q}$ in the figure
of Definition \ref{def-domi}).
Furthermore, from $\{Q_j\}_{j\in\nn}\in\Pi_\ell$
and $\ell\in(2^{-k-1},2^{-k}]$, it follows that
each dyadic cube in $\mathscr{D}_k(\rn)$
may be the lefter and lower cube (see Definition \ref{def-domi})
of at most $2^n$ cubes in $\{Q_j\}_{j\in\nn}$.
Combining this, \eqref{empty0}, and \eqref{empty1},
we obtain $3^n\times2^n=6^n$ desired subfamilies,
which completes the proof of Lemma \ref{lem-partition}.
\end{proof}

The following two lemmas are, respectively, the variants of
\cite[Lemmas 5.7 and 5.8]{ejpx00}.
Indeed, Lemma \ref{Psi-AI} shows that
$\Psi_{f,q,\az}$ is almost increasing (see Definition \ref{def-ai}),
and Lemma \ref{Psi=Phi} is the reverse of \eqref{Psi1};
we omit their proofs here because,
as in the proof of \eqref{Psi1} in Theorem \ref{thm-mean-osc},
it suffices to change `$2$' into `$q$' and
make some corresponding modifications.
\begin{lemma}\label{Psi-AI}
Let $q\in[1,\fz)$, $\az\in(-\fz,1/q)$,
and $f$ be a measurable function on $\rn$.
Then there exists a positive constant $C$,
depending only on $n$, $q$, and $\az$, such that,
for any cubes $Q_1$, $Q_2$ with $Q_1\subset Q_2$
and $\ell(Q_1)=\frac12\ell(Q_2)$,
$$\Psi_{f,q,\az}(Q_1)\le C \Psi_{f,q,\az}(Q_2),$$
where $\Psi_{f,q,\az}$ is the same as in \eqref{Psifqa}.
\end{lemma}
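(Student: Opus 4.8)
The plan is to follow the pattern of the proof of \eqref{Psi1} in Theorem \ref{thm-mean-osc}, reducing everything to a scale-by-scale comparison of dyadic mean oscillations. First I would dispose of the easy range of $\az$. When $\az\in(-\fz,0)$, by \eqref{Psi-sim} we have $\Psi_{f,q,\az}(Q)\sim{\rm MO}_{f,q}(Q)$ for every cube $Q$, with equivalence constants independent of $Q$, so the desired estimate reduces to the fact that ${\rm MO}_{f,q}$ is almost increasing (see \cite[Lemma 2.1]{jtyyz21}). Hence the whole difficulty lies in the range $\az\in[0,1/q)$, and I will assume $f\in L^q(Q_2)$ (otherwise $\Psi_{f,q,\az}(Q_2)=\fz$ and there is nothing to prove). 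Writing $\ell:=\ell(Q_2)$, so that $\ell(Q_1)=\ell/2$, I record the trivial \emph{aligned} case: if $Q_1$ happens to be a dyadic subcube of $Q_2$, then $\mathscr{D}_k(Q_1)\subset\mathscr{D}_{k+1}(Q_2)$ for every $k\in\zz_+$, and a shift of the summation index $k\mapsto k+1$ in \eqref{Psifqa} gives $\Psi_{f,q,\az}(Q_1)^q\le2^{n-q\az}\Psi_{f,q,\az}(Q_2)^q$ at once.

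The substance of the argument is the \emph{non-aligned} case, where the dyadic mesh of $Q_1$ is a shift of the dyadic mesh of $Q_2$. Here the key geometric observation is that each $I\in\mathscr{D}_k(Q_1)$, of edge length $2^{-k-1}\ell$, meets at most $2^n$ cubes $J\in\mathscr{D}_{k+1}(Q_2)$ of the same edge length, and these overlapping cubes are mutually adjacent. Splitting $I$ over these cubes and using Lemma \ref{osc-equ}(i) together with the elementary bound $\fint_I|f-c|^q\ls\sum_{J}[\fint_J|f-f_J|^q+|f_J-f_{J_0}|^q]$, where $J_0$ is one reference cube among the overlapping $J$'s, I would estimate $\fint_I|f-f_I|^q$ by the \emph{same-level} mean oscillations $\fint_J|f-f_J|^q$ of the adjacent $Q_2$-cubes plus the neighbouring mean differences $|f_J-f_{J_0}|^q$. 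Each such neighbour difference is then telescoped up the dyadic tree of $Q_2$ to the smallest common ancestor $A\in\mathscr{D}_{l}(Q_2)$ of $J$ and $J_0$, using $|f_{J'}-f_{\widehat{J'}}|\le2^{n/q}{\rm MO}_{f,q}(\widehat{J'})$ for a dyadic child $J'$ of its parent $\widehat{J'}$; this expresses $|f_J-f_{J_0}|$ as a sum of mean oscillations ${\rm MO}_{f,q}(A_l)$ of the ancestors $A_l\in\mathscr{D}_l(Q_2)$ along the chain, for $l$ ranging from the merging level up to $k$.

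It then remains to sum these contributions against the weights $2^{(q\az-n)k}$ in \eqref{Psifqa}. The decisive point is a multiplicity count: a fixed ancestor $A\in\mathscr{D}_l(Q_2)$ is charged only by those $I\in\mathscr{D}_k(Q_1)$ that straddle one of the central hyperplanes of $A$, and there are only $\sim2^{(n-1)(k-l)}$ such $I$ (they cluster in a slab of thickness $\sim2^{-k}\ell$ about an $(n-1)$-dimensional face of $A$), rather than the $2^{n(k-l)}$ cubes contained in $A$. Combining this count with the weight $2^{(q\az-n)k}$ and a H\"older step to pass the $q$-th power through the chain sum, the level-$l$ contribution is controlled by $2^{(q\az-n)l}{\rm MO}_{f,q}(A)^q$ times a geometric factor $\sum_{k\ge l}2^{(q\az-1)(k-l)}$; this series converges \emph{exactly} when $q\az<1$, that is, when $\az<1/q$. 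Summing in $l$ then yields $\Psi_{f,q,\az}(Q_1)^q\ls\sum_{l}2^{(q\az-n)l}\sum_{A\in\mathscr{D}_l(Q_2)}{\rm MO}_{f,q}(A)^q\sim\Psi_{f,q,\az}(Q_2)^q$, as desired. (Alternatively, one may phrase the whole comparison through the kernel representation \eqref{psi4.3}--\eqref{k}, proving the integrated inequality $\int\int k_{Q_1}(x,y)|f(x)-f(y)|^q\,dx\,dy\ls\int\int k_{Q_2}(x,y)|f(x)-f(y)|^q\,dx\,dy$; note that the pointwise bound $k_{Q_1}\ls k_{Q_2}$ is \emph{false}, which is precisely why the summation argument above is needed.) The main obstacle is exactly this cross-grid alignment: the worst configurations are cubes $I$ that straddle a central hyperplane of a large ancestor of $Q_2$, so that their smallest containing $Q_2$-dyadic cube is much larger than $I$, and controlling their accumulated contribution is what forces both the refined multiplicity count $2^{(n-1)(k-l)}$ and the restriction $\az<1/q$.
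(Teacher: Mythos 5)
Your argument is correct and is, in substance, exactly the proof that the paper omits by reference to \cite[Lemma 5.7]{ejpx00} (with $2$ replaced by $q$): the reduction to ${\rm MO}_{f,q}$ via \eqref{Psi-sim} for $\az<0$, the decomposition of each $I\in\mathscr{D}_k(Q_1)$ against the $2^n$ adjacent cubes of the shifted grid $\mathscr{D}_{k+1}(Q_2)$, the telescoping of $|f_J-f_{J_0}|$ to the common ancestor, and the boundary multiplicity count $2^{(n-1)(k-l)}$ producing the geometric factor $\sum_{k\ge l}2^{(q\az-1)(k-l)}$ are precisely the intended mechanism and correctly locate the source of the restriction $\az<1/q$. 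The only cosmetic point is that a cube $I\in\mathscr{D}_k(Q_1)$ charging $A\in\mathscr{D}_l(Q_2)$ may straddle either a central hyperplane of $A$ or the boundary of $A$ itself (when the common ancestor of $J$ and $J_0$ lies strictly above level $l$), but both configurations give the same count.
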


\begin{lemma}\label{Psi=Phi}
Let $q\in[1,\fz)$, $\az\in(-\fz,1/q)$, and $f\in L^q(Q)$.
Let $\Phi_{f,q,\az}$ and $\Psi_{f,q,\az}$ be, respectively,
as in \eqref{Phifqa} and \eqref{Psifqa}.
Then
$$\Psi_{f,q,\az}(Q)
\sim
\Phi_{f,q,\az}(Q)$$
with the positive equivalence constants depending only on
$n$, $q$, and $\az$.
\end{lemma}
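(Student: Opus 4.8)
The plan is to obtain the two inequalities separately and to exploit that the upper bound is already in hand. Indeed, estimate \eqref{Psi1} from the proof of Theorem \ref{thm-mean-osc} was established for every $\az\in\rr$ and, after recalling $\ell^{q\az-n}=|Q|^{q\az/n-1}$, reads precisely $[\Psi_{f,q,\az}(Q)]^q\ls[\Phi_{f,q,\az}(Q)]^q$. Thus it remains only to prove the reverse inequality $\Phi_{f,q,\az}(Q)\ls\Psi_{f,q,\az}(Q)$, and for this I would adapt \cite[Lemma 5.8]{ejpx00} from $q=2$ to general $q$, systematically replacing squares by $q$-th powers. Normalizing $\ell(Q)=1$ (both set functions scale alike), I would first decompose the double integral defining $\Phi_{f,q,\az}(Q)$ according to the smallest dyadic subcube of $Q$ containing both variables: with $J\in\mathscr{D}_m(Q)$ and $J'\neq J''$ ranging over the distinct children of $J$, one has $\int_Q\int_Q(\cdots)=\sum_{m\ge0}\sum_{J\in\mathscr{D}_m(Q)}\sum_{J'\neq J''}\int_{J'}\int_{J''}(\cdots)$, a partition of $Q\times Q$ counting each pair $(x,y)$ once, with $\delta(x,y)=\ell(J)$. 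On each block I would use $|f(x)-f(y)|^q\ls|f(x)-f_J|^q+|f(y)-f_J|^q$ and integrate the second variable out.

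This reduces matters to estimating the inner kernel $\int_{J\setminus J'}|x-y|^{-n-q\az}\,dy$ for $x\in J'$. Since $|x-y|$ there ranges between $\dist(x,J\setminus J')$ and $\sim\ell(J)$, an elementary one-dimensional computation gives the bound $\ls[\dist(x,J\setminus J')]^{-q\az}$ when $\az\ge0$ (with a harmless logarithm at $\az=0$, absorbed by a slightly larger exponent), and the bound $\ls[\ell(J)]^{-q\az}$ when $\az\le0$. In the latter regime there is no boundary enhancement: summing $[\ell(J)]^{-q\az}\int_{J'}|f(x)-f_J|^q\,dx$ over the children gives $[\ell(J)]^{-q\az}\int_J|f-f_J|^q=2^{mq\az}\int_J|f-f_J|^q$, and summing over $J\in\mathscr{D}_m(Q)$ and $m\ge0$ reassembles directly into $\sum_m2^{mq\az}\sum_{J\in\mathscr{D}_m(Q)}\int_J|f-f_J|^q=[\Psi_{f,q,\az}(Q)]^q$. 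So the $\az\le0$ case is immediate, and the substantial range is $0\le\az<1/q$, equivalently $0\le q\az<1$, which is exactly where the hypothesis $\az<1/q$ is used.

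For $0\le q\az<1$ I would grade each child $J'$ by the distance of $x$ to the interior faces of $J'$ (those shared with its siblings), splitting $J'$ into the boundary shells $S_i:=\{x\in J':\ \dist(x,J\setminus J')\sim2^{-i}\ell(J')\}$, on which $[\dist(x,J\setminus J')]^{-q\az}\sim2^{(m+1+i)q\az}$. Each $S_i$ is covered, with bounded overlap, by dyadic cubes $I\in\mathscr{D}_{m+1+i}(Q)$ adjacent to those interior faces, and on each such $I$ I would write $\int_I|f-f_J|^q\ls\int_I|f-f_I|^q+|I|\,|f_I-f_J|^q$, controlling the jump $|f_I-f_J|$ by telescoping the averages along the dyadic chain from $J$ down to $I$, each step being $\ls{\rm MO}_{f,q}$ of an intermediate ancestor. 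The weight $2^{(m+1+i)q\az}=2^{kq\az}$ carried by a level-$k$ cube $I$ (with $k=m+1+i$) matches exactly the weight in $[\Psi_{f,q,\az}(Q)]^q=\sum_k2^{kq\az}\sum_{I\in\mathscr{D}_k(Q)}\int_I|f-f_I|^q$, so the leading terms reassemble into $[\Psi_{f,q,\az}(Q)]^q$. The hypothesis $\az<1/q$ enters in the shell sums: the shell $S_i$ has relative measure $\sim2^{-i}$ and weight $2^{iq\az}$, so the governing geometric series is $\sum_{i\ge0}2^{i(q\az-1)}$, convergent precisely when $q\az<1$; the telescoped jump terms generate comparable series in $i$ that converge for the same reason.

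I expect the main obstacle to be controlling the total multiplicity with which a fixed dyadic cube $I$ is charged as $(m,J,J',i)$ vary, i.e. showing that $I$ is adjacent to an interior (central) hyperplane of only boundedly many of its dyadic ancestors $J$, so that each oscillation $\int_I|f-f_I|^q$ and each telescoped difference $|f_I-f_J|^q$ is counted a number of times bounded by a constant depending only on $n$. This bounded-overlap property — together with the weighted $\ell^q$-bookkeeping of the telescoped average differences across scales — is where the ``corresponding modifications'' of \cite[Lemma 5.8]{ejpx00} concentrate; once it is in place, the remaining steps are the routine replacement of $2$ by $q$ in the $q=2$ argument, and combining with \eqref{Psi1} yields $\Psi_{f,q,\az}(Q)\sim\Phi_{f,q,\az}(Q)$.
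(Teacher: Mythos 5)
Your proposal is correct and follows essentially the same route as the paper: the paper omits this proof altogether, remarking that one direction is \eqref{Psi1} and that the reverse inequality follows from the proof of \cite[Lemma 5.8]{ejpx00} upon changing $2$ into $q$ with corresponding modifications, and your sketch (one direction quoted from \eqref{Psi1}; for the other, the smallest-common-dyadic-ancestor decomposition, the kernel bounds, the boundary shells governed by the series $\sum_{i\ge0}2^{i(q\az-1)}$ convergent exactly when $\az<1/q$, and the telescoping of dyadic averages) is precisely that adaptation. The bounded-multiplicity fact you flag as the main obstacle --- a dyadic cube can be adjacent to the interior dividing hyperplanes of only boundedly many of its ancestors --- is true and is indeed where the ``corresponding modifications'' concentrate, so your outline is, if anything, more explicit than the paper's own treatment.
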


Now, we prove Theorem \ref{thm-dyadic}.
\begin{proof}[Proof of Theorem \ref{thm-dyadic}]
Let $1\le q\le p<\fz$ and $\az\in\rr$.
It is obvious to find that
$$\|\cdot\|_{JNQ^{\az,{\rm dyadic}}_{p,q}(\rn)}
\le\|\cdot\|_{JNQ^{\az}_{p,q}(\rn)}$$
and hence
$$JNQ^{\az}_{p,q}(\rn)\subset
JNQ^{\az,{\rm dyadic}}_{p,q}(\rn),$$
which, combined with Theorem \ref{thm-JNQ123},
further implies that
$$JNQ^{\az}_{p,q}(\rn)\subset
\lf[JNQ^{\az,{\rm dyadic}}_{p,q}(\rn)\cap JN^{\rm con}_{p,q}(\rn)\r].$$

Conversely, let $\az\in(-\fz,1/q)$,
$f\in JNQ^{\az,{\rm dyadic}}_{p,q}(\rn)\cap JN^{\rm con}_{p,q}(\rn)$,
and $\{Q_j\}_{j\in\nn}\in\Pi_\ell$ with $\ell\in[2^{-k-1},2^{-k})$
for some given $k\in\zz$.
For any $j\in\nn$, let $Q_j^\flat=\bigcup_{i=1}^{2^n}Q_j^{(i)}$
be the dominated cube of $Q_j$ as in Definition \ref{def-domi}.
Also, let $\{\cq_m\}_{m=1}^{6^n}$ be as in Lemma \ref{lem-partition}.
Then, by the observation $|Q_j^\flat|\sim|Q_j|$,
Lemmas \ref{Psi=Phi} and \ref{lem-partition},
\eqref{Psifqa}, and Proposition \ref{prop-dyadic-norm},
we conclude that
\begin{align*}
&\sum_{j}\lf|Q_{j}\r|
\lf[|Q_j|^{\frac{q\az}n-1}\int_{Q_j}\int_{Q_j}
\frac{|f(x)-f(y)|^q}{|x-y|^{n+q\az}}\,dy\,dx \r]^\frac{p}q\\
&\quad\ls\sum_{j}\lf|Q_{j}^\flat\r|
\lf[\lf|Q_j^\flat\r|^{\frac{q\az}n-1}\int_{Q_j^\flat}\int_{Q_j^\flat}
\frac{|f(x)-f(y)|^q}{|x-y|^{n+q\az}}\,dy\,dx \r]^\frac{p}q\\
&\quad\sim\sum_j \lf|Q_j^\flat\r|
\lf[\Psi_{f,q,\az}(Q_j^\flat)\r]^p\\
&\quad\sim\sum_{m=1}^{6^n}\sum_{\{j:\ Q_j\in\cq_m\}}
\lf|Q_j^\flat\r|
\lf[\Psi_{f,q,\az}\lf(Q_j^{(1)}\cup\cdots\cup Q_j^{(2^n)}\r)\r]^p\\
&\quad\sim\sum_{m=1}^{6^n}\sum_{\{j:\ Q_j\in\cq_m\}}
\lf|Q_j^\flat\r|
\lf\{\fint_{Q_j^\flat}\lf|f(x)-f_{Q_j^\flat}\r|^q\,dx
+\lf[\sum_{i=1}^{2^n}\Psi_{f,q,\az}(Q_j^{(i)})
\r]^\frac1q\r\}^p\\
&\quad\ls\sum_{m=1}^{6^n}
\sum_{\{j:\ Q_j\in\cq_m\}}
\lf|Q_j^\flat\r|
\lf[\fint_{Q_j^\flat}\lf|f(x)-f_{Q_j^\flat}\r|^q
\,dx\r]^\frac pq
+\sum_{m=1}^{6^n}\sum_{Q\in\mathscr{D}_k}
|Q|\lf[\Psi_{f,q,\az}(Q)\r]^p\\
&\quad\ls\|f\|_{JN^{\rm con}_{p,q}(\rn)}^p
+\|f\|_{JNQ^{\az,{\rm dyadic}}_{p,q}(\rn)}^p,
\end{align*}
where the implicit positive constants depend only on
$n,\ p,\ q,$ and $\az$.
This implies that $f\in JNQ^{\az}_{p,q}(\rn)$ and
$\|f\|_{JNQ^{\az}_{p,q}(\rn)}
\ls\|f\|_{JN^{\rm con}_{p,q}(\rn)}
+\|f\|_{JNQ^{\az,{\rm dyadic}}_{p,q}(\rn)}$,
and hence
$$JNQ^{\az}_{p,q}(\rn)\supset
\lf[JNQ^{\az,{\rm dyadic}}_{p,q}(\rn)\cap JN^{\rm con}_{p,q}(\rn)\r],$$
which completes the proof of Theorem \ref{thm-dyadic}.
\end{proof}

At the end of this section,
we discuss some other dyadic $JNQ$-type norms.
Let $\Phi_{f,q,\az}$ be the same as in \eqref{Phifqa},
$$\|f\|_{JNQ^{\az,{\rm I}}_{p,q}(\rn)}
:=\sup \lf\{\sum_j |Q_j|\lf[\Phi_{f,q,\az}(Q_j)\r]^p\r\}^\frac1p$$
with the supremum taken over all collections $\{Q_j\}_j$ of
subcubes of $\rn$ with pairwise disjoint interiors
and same dyadic edge length,
$$\|f\|_{JNQ^{\az,{\rm II}}_{p,q}(\rn)}
:=\sup_{k\in\zz}\lf\{\sum_{Q\in \mathscr{D}_k(\rn)}\lf|Q\r|
\lf[\Phi_{f,q,\az}(Q_j)\r]^p \r\}^{\frac{1}{p}},$$
and
$$\|f\|_{JNQ^{\az,{\rm III}}_{p,q}(\rn)}
:=\sup \lf\{\sum_j |Q_j|\lf[\Phi_{f,q,\az}(Q_j)\r]^p\r\}^\frac1p$$
with the supremum taken over all collections $\{Q_j\}_j$ of disjoint
dyadic cubes of $\rn$ ($\{Q_j\}_j$ may have different edge lengths).
\begin{remark}\label{rem-dJNQs}
\begin{enumerate}
\item[\rm (i)]
From Proposition \ref{prop-integral} and
the proof of Corollary \ref{coro-IntJNQ},
we easily deduce that
$$\|\cdot\|_{JNQ^{\az,{\rm I}}_{p,q}(\rn)}
\sim\|\cdot\|_{\widetilde{JNQ}^{\az}_{p,q}(\rn)}
\sim\|\cdot\|_{JNQ^{\az}_{p,q}(\rn)}.$$
Moreover, we apparently have
$$\|\cdot\|_{JNQ^{\az,{\rm I}}_{p,q}(\rn)}
\ge\|\cdot\|_{JNQ^{\az,{\rm II}}_{p,q}(\rn)}$$
via their definitions,
but the reverse inequality is obviously false
(it suffices to consider functions which are
almost everywhere constant in each octant,
but not equal to some constant almost everywhere on $\rn$).

\item[\rm (ii)]
From Lemmas \ref{Psidyasim} and \ref{Psi=Phi},
it follows that, for any $\az\in(-\fz,\frac1q)$,
$$\|\cdot\|_{JNQ^{\az,{\rm dyadic}}_{p,q}(\rn)}
\sim \|\cdot\|_{JNQ^{\az,{\rm II}}_{p,q}(\rn)};$$
but the case $\az\in[\frac1q,\fz)$ is still \emph{unclear} so far.
The main obstacle is that we do not know whether or not
Lemma \ref{Psi=Phi} still holds true for $\az\in[\frac1q,\fz)$;
see also \cite[p.\,22, Section 1.5]{x19} for some similar
problems in $Q$ spaces.

\item[\rm (iii)]
Recall that Berkovits et al. in \cite[Corollary 3.3]{bkm16}
showed the John--Nirenberg inequality of $JN_p^{\rm dyadic}(Q_0)$
whose norm is taken over all collections of disjoint dyadic cubes of $Q_0$
(not necessary to have equal edge length).
Moreover, Yue and Dafni \cite{yd09} established the John--Nirenberg inequality of
$Q_\az(\rn)$ with $\az\in[0,1)$;
see also \cite[p.\,32, Theorem 2.3]{x19} and \cite{y10}.
Thus, it is very  interesting to find the John--Nirenberg inequality
of $JNQ^{\az,{\rm III}}_{p,1}(\rn)$ with $p\in[1,\fz)$ and $\az\in\rr$.
This is a challenging problem which is still \emph{open} so far.
\end{enumerate}
\end{remark}

\section{Composition Operators on $JNQ$ Spaces}\label{sec-co}

This section is devoted to the left and the right
composition operators on $JNQ$ spaces.
Let $L:\ \rr\to\rr$ and $R:\ \rn\to\rn$ be
suitable mappings.
Then, for any measurable function $f$ and any $x\in\rn$,
the \emph{left composition operator} $\mathscr{C}_{L}$
is defined by setting
$$\mathscr{C}_{L}(f)(x)=L(f(x)),$$
and the \emph{right composition operator} $\mathscr{C}_{R}$
is defined by setting
$$\mathscr{C}_{R}(f)(x)=f(R(x)).$$

We first show the boundedness of $\mathscr{C}_{L}$
in Theorem \ref{thm-LC} below, which is an application of
Proposition \ref{prop-W1r}.
Recall that the \emph{Lipschitz space} ${\rm Lip}(\rn)$
is defined to be the set of all
measurable functions $f$ on $\rn$ such that
$$\|f\|_{{\rm Lip}(\rn)}:=\sup\lf\{\frac{|f(x)-f(y)|}{|x-y|}:\
x,\ y\in\rn\ {\rm and}\ x\neq y\r\}<\fz,$$
and a well-known result of Campanato \cite{c64} indicates that,
for any $q\in[1,\fz)$,
\begin{align}\label{lip=osc}
\|f\|_{{\rm Lip}(\rn)}\sim
\sup_{{\rm cube\ }Q\subset\rn}
|Q|^{-\frac1n}\lf[\fint_Q\lf|f(x)-f_Q\r|^q\,dx\r]^\frac1q
\end{align}
with the positive equivalence constants independent of $f$.

\begin{theorem}\label{thm-LC}
Let $\az\in(-\fz,1)$, $p,\ q\in[1,\fz)$, $\gz\in[q,\fz)$
with $\frac1\gz=\frac1p+\frac1n$,
and $L:\ \rr\to\rr$ be a continuous mapping.
Then
\begin{enumerate}
\item[\rm (i)]
$\mathscr{C}_{L}$ is bounded on $JNQ^{\az}_{p,q}(\rn)$
if and only if $L\in {\rm Lip}(\rr)$;

\item[\rm (ii)]
$f\in JNQ^{\az}_{p,q}(\rn)$ if and only if
there exists a $g\in JNQ^{\az}_{p,q}(\rn)$
such that
$$\frac1g\in JNQ^{\az}_{p,q}(\rn)
\quad{\rm and}\quad
f=g-\frac1g.$$
\end{enumerate}
\end{theorem}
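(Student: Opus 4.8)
The plan is to prove (i) first and then deduce (ii) from it by choosing one explicit symbol; throughout I use the integral quantity $\Phi_{f,q,\az}$ from \eqref{Phifqa} and write $\|\mathscr{C}_L\|$ for the operator norm of $\mathscr{C}_L$ on $JNQ^\az_{p,q}(\rn)$. The easy half of (i) is a pointwise estimate: if $L\in{\rm Lip}(\rr)$, then $|L(f(x))-L(f(y))|\le\|L\|_{{\rm Lip}(\rr)}|f(x)-f(y)|$, so $\Phi_{\mathscr{C}_L f,q,\az}(Q)\le\|L\|_{{\rm Lip}(\rr)}\Phi_{f,q,\az}(Q)$ for every cube $Q$, and taking the supremum over all $\{Q_j\}_j\in\Pi_\ell$ and $\ell\in(0,\fz)$ in Definition \ref{def-JNQpqa} yields $\|\mathscr{C}_L f\|_{JNQ^\az_{p,q}(\rn)}\le\|L\|_{{\rm Lip}(\rr)}\|f\|_{JNQ^\az_{p,q}(\rn)}$.

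The converse in (i) is the heart of the matter and, I expect, the main obstacle. I would test $\mathscr{C}_L$ against affine ``ramps''. Fix $u<v$ in $\rr$ and a cube $Q=Q(z,\ell)$, and set $f:=u+(v-u)\eta$, where $\eta\colon\rn\to[0,1]$ is a compactly supported Lipschitz function equal to the affine ramp $x\mapsto[x_1-(z_1-\ell/2)]/\ell$ on $Q$, with $|\nabla\eta|\ls\ell^{-1}$ and $\supp\eta\subset 3Q$ (built by clipping the one-variable ramp to $[0,1]$ and multiplying by a cutoff that is $1$ on $Q$). Then $\|\nabla f\|_{L^\gz(\rn)}^\gz\ls(|v-u|/\ell)^\gz\ell^n=|v-u|^\gz\ell^{n-\gz}$, and since $\frac1\gz=\frac1p+\frac1n$ forces $(n-\gz)/\gz=n/p$, Proposition \ref{prop-W1r} gives $f\in JNQ^\az_{p,q}(\rn)$ with $\|f\|_{JNQ^\az_{p,q}(\rn)}\ls\|f\|_{\dot{W}^{1,\gz}(\rn)}\ls|v-u|\,\ell^{n/p}$. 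Testing the assumed-bounded operator on the single collection $\{Q\}\in\Pi_\ell$ and cancelling $|Q|^{1/p}=\ell^{n/p}$ gives $\Phi_{\mathscr{C}_L f,q,\az}(Q)\ls\|\mathscr{C}_L\|\,|v-u|$. The delicate point is to bound this left-hand side from below by a mean oscillation of $L$. Assuming first $n+q\az\ge0$, so that $|x-y|^{-(n+q\az)}\ge(\sqrt n\,\ell)^{-(n+q\az)}$ for $x,y\in Q$, the exponents in \eqref{Phifqa} collapse and Lemma \ref{osc-equ}(i) give $[\Phi_{\mathscr{C}_L f,q,\az}(Q)]^q\gs\fint_Q\fint_Q|L(f(x))-L(f(y))|^q\,dx\,dy\gs\fint_Q|L\circ f-(L\circ f)_Q|^q\,dx$. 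Since $f|_Q$ depends only on $x_1$ and pushes the normalized Lebesgue measure on $Q$ forward to the normalized Lebesgue measure on $[u,v]$, the last average equals $\fint_u^v\lf|L(w)-\fint_u^v L\r|^q\,dw$. Hence $|v-u|^{-1}[\fint_u^v|L-L_{[u,v]}|^q\,dw]^{1/q}\ls\|\mathscr{C}_L\|$ uniformly in $u<v$, where $L_{[u,v]}:=\fint_u^v L$, and the one-dimensional Campanato characterization \eqref{lip=osc} (see \cite{c64}) upgrades this to $\|L\|_{{\rm Lip}(\rr)}\ls\|\mathscr{C}_L\|<\fz$. The residual case $n+q\az<0$ (that is, $\az<-\frac nq$) is handled by Theorem \ref{thm-JNQ123}(iii): for every $\az<0$ one has $JNQ^\az_{p,q}(\rn)=JN^{\rm con}_{p,q}(\rn)$ with equivalent norms, so boundedness of $\mathscr{C}_L$ does not depend on the particular negative $\az$, and I may run the argument with some $\az\in(-\frac nq,0)$, for which $n+q\az>0$.

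For (ii) I would realize the nonlinearity $s\mapsto s-1/s$ as a composition symbol. Let $L_0(s):=s-1/s$ on $(0,\fz)$; then $L_0'=1+s^{-2}>0$, so $L_0$ is an increasing bijection of $(0,\fz)$ onto $\rr$, and by the inverse function theorem $0<(L_0^{-1})'=g^2/(g^2+1)\le1$ with $g:=L_0^{-1}$, whence $L_0^{-1}\in{\rm Lip}(\rr)$; moreover $(1/L_0^{-1})'=-1/(g^2+1)$ has modulus at most $1$, so $1/L_0^{-1}\in{\rm Lip}(\rr)$ as well. Given $f\in JNQ^\az_{p,q}(\rn)$, set $g:=L_0^{-1}\circ f=\mathscr{C}_{L_0^{-1}}(f)$; by the ``if'' part of (i) both $g$ and $1/g=\mathscr{C}_{1/L_0^{-1}}(f)$ lie in $JNQ^\az_{p,q}(\rn)$, and by construction $g-1/g=L_0(g)=f$. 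Conversely, if $f=g-1/g$ with $g,1/g\in JNQ^\az_{p,q}(\rn)$, then $f\in JNQ^\az_{p,q}(\rn)$ because the space is linear: the triangle inequality for $\|\cdot\|_{JNQ^\az_{p,q}(\rn)}$ follows by applying the Minkowski inequality first to $\Phi_{\cdot,q,\az}(Q_j)$ (an $L^q$ double-integral norm of a difference quotient) and then to the $\ell^p$-sum over $j$. This yields (ii) and completes the plan.
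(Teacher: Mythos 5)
Your proposal is correct and follows essentially the same route as the paper's proof: the sufficiency in (i) is the same pointwise Lipschitz estimate, the necessity tests $\mathscr{C}_L$ against a function that is affine in $x_1$ on a cube, controls its $JNQ^{\az}_{p,q}(\rn)$ norm via Proposition \ref{prop-W1r}, and concludes with the one-dimensional Campanato characterization \eqref{lip=osc}, while (ii) uses exactly the paper's factorization $x=g(x)-1/g(x)$ with $g=(x+\sqrt{x^2+4})/2$ (your $L_0^{-1}$). The only substantive difference is cosmetic but in your favor: your clipped, globally Lipschitz ramp genuinely lies in $\dot{W}^{1,\gz}(\rn)$, whereas the paper's $f_1=x_1\mathbf{1}_{Q}$ has a jump on $\partial Q$ that the paper's computation of $\|f_1\|_{\dot{W}^{1,\gz}(\rn)}$ quietly ignores; your explicit case split on the sign of $n+q\az$ plays the role of the paper's appeal to Theorem \ref{thm-JNQ123}.
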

\begin{proof}
Let all the symbols be as in the present proposition.
We first show (i).
If $L\in{\rm Lip}(\rr)$,
then, for any $x,\ y\in\rn$, we have
$$\lf|L(f(x))-L(f(y))\r|
\le\|L\|_{{\rm Lip}(\rr)}\lf|f(x)-f(y)\r|$$
and hence
$$\|\mathscr{C}_{L}(f)\|_{JNQ^{\az}_{p,q}(\rn)}\le
\|L\|_{{\rm Lip}(\rr)}\|f\|_{JNQ^{\az}_{p,q}(\rn)}.$$
Conversely, let $\mathscr{C}_{L}$ be bounded on $JNQ^{\az}_{p,q}(\rn)$.
To obtain $L\in{\rm Lip}(\rr)$,
via \eqref{lip=osc}, it suffices to show that,
for any given interval $I\subset\rr$ with finite length $\ell$,
\begin{align}\label{lip-1}
|I|^{-1}\lf[\fint_I\lf|L(t)-L_I\r|^q\,dt\r]^\frac1q\ls1.
\end{align}
To this end, let $Q:=I^n:=I\times\cdots\times I$ be a cube of $\rn$,
and
$$f_1(x):=
\begin{cases}
x_1, & x=(x_1,\dots,x_n)\in Q, \\
0, & x\in\rn\setminus Q.
\end{cases}$$
Then, from Proposition \ref{prop-W1r}, it follows that
\begin{align}\label{f1-JNQ}
\lf\|f_1\r\|_{JNQ^{\az}_{p,q}(\rn)}
\ls\lf\|f_1\r\|_{\dot{W}^{1,\gz}(\rn)}
\sim\lf[\int_{Q}|(1,0,\dots,0)|^{\gz}\,dx\r]^\frac1\gz
\sim |Q|^{\frac1p+\frac1n}.
\end{align}
Notice that
\begin{align*}
\mathscr{C}_{L}(f_1)(x)
&=L(f_1(x))
=
\begin{cases}
L(x_1), & x=(x_1,\dots,x_n)\in Q, \\
0, & x\in\rn\setminus Q
\end{cases}
\end{align*}
and hence
\begin{align}\label{ItoQ}
\fint_I\lf|L(t)-L_I\r|^q\,dt
=\fint_Q\lf|\mathscr{C}_{L}(f_1)(x)-
[\mathscr{C}_{L}(f_1)]_Q\r|^q\,dx.
\end{align}
Moreover, by Definition \ref{def-JNconpqa},
Theorem \ref{thm-JNQ123}, the boundedness of
$\mathscr{C}_{L}$, and \eqref{f1-JNQ}, we conclude that
\begin{align*}
&\lf\{|Q|
\lf[\fint_{Q}\lf|\mathscr{C}_{L}(f_1)(x)-
[\mathscr{C}_{L}(f_1)]_Q\r|^{q}\,dx\r]^{\frac{p}{q}}\r\}^{\frac{1}{p}}\\
&\quad\leq
\lf\|\mathscr{C}_{L}(f_1)\r\|_{JN^{\rm con}_{p,q}(\rn)}
\ls
\lf\|\mathscr{C}_{L}(f_1)\r\|_{JNQ^{\az}_{p,q}(\rn)}
\ls
\lf\|f_1\r\|_{JNQ^{\az}_{p,q}(\rn)}
\ls |Q|^{\frac1p+\frac1n},
\end{align*}
which, together with \eqref{ItoQ} and the fact that
$|Q|=|I|^n$, further implies that
$$
|I|^{-1}\lf[\fint_I\lf|L(x)-L_I\r|^q\,dx\r]^\frac1q
\ls
|I|^{-1}|Q|^{\frac1n}
\sim 1.
$$
and hence \eqref{lip-1} holds true.
This shows that $\phi\in{\rm Lip}(\rn)$
and hence finishes the proof of (i).

We then show (ii).
If there exists a $g\in JNQ^{\az}_{p,q}(\rn)$
such that $\frac1g\in JNQ^{\az}_{p,q}(\rn)$ and $f=g-\frac1g$,
then we apparently have $f\in JNQ^{\az}_{p,q}(\rn)$.
Conversely, let $f\in JNQ^{\az}_{p,q}(\rn)$.
For any $x\in\rn$, let
$$g(x):=\frac{\sqrt{x^2+2^2}+x}2.$$
Then, for any $x\in\rn$,
$$\frac{1}{g(x)}=\frac{\sqrt{x^2+2^2}-x}2,$$
and hence
\begin{align}\label{1=g-g}
x=g(x)-\frac{1}{g(x)}.
\end{align}
Notice that $g,\ \frac{1}{g}\in {\rm Lip}(\rn)$.
Then, from (i) and $f\in JNQ^{\az}_{p,q}(\rn)$, it follows that
$g(f),\ \frac1{g(f)}\in JNQ^{\az}_{p,q}(\rn)$,
which, combined with \eqref{1=g-g}, further implies that
\begin{align*}
f=g(f)-\frac1{g(f)}.
\end{align*}
This finishes the proof of (ii) and hence of Theorem \ref{thm-LC}.
\end{proof}

\begin{remark}
We have the following observation.
\begin{enumerate}
\item[\rm (i)]
For more studies of left composition operators on
Lebesgue spaces, Sobolev spaces,
and function spaces of Besov--Triebel--Lizorkin type,
we refer the reader to \cite{bms20,bms14,bms10} and the monograph \cite{rs96};

\item[\rm (ii)]
for more studies of left composition operators on
$\BMO$ and $Q$ spaces, we refer the reader to
\cite{bls02,x17} and the monograph \cite{x19}.
\end{enumerate}
\end{remark}

Next, we consider the right composition operator.
Recall that, if $R$ is a quasicomformal mapping
with some particular geometrical assumptions,
then Reimann \cite[Theorem 2]{r74} showed that $\mathscr{C}_{R}$ is bounded on
$\BMO(\rn)$,
and Koskela et al. \cite[Theorem 1.3]{kxzz17} showed that
$\mathscr{C}_{R}$ is also bounded on $Q_\az(\rn)$ with $\az\in(0,1)$.
However, the following proposition indicates that
the quasicomformal mapping may be an unsuitable
object to study the boundedness of right composition operators
on $JNQ$ spaces.
In what follows, for any $r\in(0,\fz)$
and any cube $Q$ of $\rn$, let
$r\times Q:=\{rx:\ x\in Q\}$.

\begin{proposition}\label{prop-NoQCM}
Let $p\in[1,\fz)$, $r\in(0,\fz)$, and
$$\mathscr{C}_{R}:=
\begin{cases}
\qquad\rn&\longrightarrow\qquad\rn\\
x=(x_1,\dots,x_n)&\longmapsto rx=(rx_1,\dots,rx_n).
\end{cases}$$
Then
\begin{align}\label{-n/p}
\|\mathscr{C}_{R}(f)\|_{JN_p^{\rm con}(\rn)}=
r^{-n/p}\|f\|_{JN_p^{\rm con}(\rn)}.
\end{align}
\end{proposition}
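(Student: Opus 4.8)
The plan is to reduce the whole statement to a single change of variables on a fixed cube, and then to track how the dilation $\mathscr{C}_R$ acts on the collections $\Pi_\ell$ and on Lebesgue measure. First I would fix a cube $Q$ of $\rn$ and compute the mean oscillation of $\mathscr{C}_R(f)$ on $Q$. Using the substitution $y=rx$ (so that $dy=r^n\,dx$, and $x\in Q$ corresponds to $y\in r\times Q$, where $r\times Q:=\{rx:\ x\in Q\}$ as in the statement), one sees that the average $[\mathscr{C}_R(f)]_Q=\fint_Q f(rx)\,dx$ equals $f_{r\times Q}$, since $|r\times Q|=r^n|Q|$ forces the Jacobian $r^{-n}$ and the normalising factor $|Q|^{-1}$ to recombine into $|r\times Q|^{-1}$. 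The same substitution then yields the key identity on cubes
$${\rm MO}_{\mathscr{C}_R(f),1}(Q)={\rm MO}_{f,1}(r\times Q),$$
which is the heart of the argument.

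Next I would insert this identity into the definition of $\|\cdot\|_{JN_p^{\rm con}(\rn)}=\|\cdot\|_{JN_{p,1}^{\rm con}(\rn)}$. For any $\ell\in(0,\fz)$ and any $\{Q_j\}_j\in\Pi_\ell$, the dilated collection $\{r\times Q_j\}_j$ again consists of cubes with pairwise disjoint interiors and common edge length $r\ell$, so $\{r\times Q_j\}_j\in\Pi_{r\ell}$; conversely every member of $\Pi_{r\ell}$ arises in this way. Thus $\{Q_j\}_j\mapsto\{r\times Q_j\}_j$ is a bijection from $\Pi_\ell$ onto $\Pi_{r\ell}$. Combining this with $|Q_j|=r^{-n}|r\times Q_j|$ and the identity above, each summand transforms as
$$|Q_j|\lf[{\rm MO}_{\mathscr{C}_R(f),1}(Q_j)\r]^p
=r^{-n}\lf|r\times Q_j\r|\lf[{\rm MO}_{f,1}(r\times Q_j)\r]^p,$$
so that the entire sum simply acquires the constant factor $r^{-n}$.

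Finally I would take the supremum. Writing $P_j:=r\times Q_j$ and letting $\ell$ run over $(0,\fz)$, the edge length $r\ell$ also runs over all of $(0,\fz)$, so the supremum defining $\|\mathscr{C}_R(f)\|_{JN_p^{\rm con}(\rn)}^p$ is transported, without loss and without duplication, onto the supremum over all $(\ell',\{P_j\}_j)$ defining $\|f\|_{JN_p^{\rm con}(\rn)}^p$. This gives $\|\mathscr{C}_R(f)\|_{JN_p^{\rm con}(\rn)}^p=r^{-n}\|f\|_{JN_p^{\rm con}(\rn)}^p$, and taking $p$-th roots yields \eqref{-n/p}. I do not anticipate a genuine obstacle here: the proof is a clean dilation computation. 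The only point that requires mild care is the bookkeeping—ensuring the Jacobian $r^{-n}$ is not absorbed incorrectly, that every average is taken over the correct cube ($Q$ versus $r\times Q$), and that the map on collections is a genuine bijection between $\Pi_\ell$ and $\Pi_{r\ell}$ so that passing to the supremum neither loses nor double-counts any configuration.
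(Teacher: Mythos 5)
Your proposal is correct and follows essentially the same route as the paper: the change of variables $y\mapsto ry$ giving $(\mathscr{C}_R(f))_{Q_j}=f_{r\times Q_j}$, the resulting identity for the mean oscillation, the factor $r^{-n}$ from $|Q_j|=r^{-n}|r\times Q_j|$, and the observation that $\{Q_j\}_j\mapsto\{r\times Q_j\}_j$ carries $\Pi_\ell$ onto $\Pi_{r\ell}$. Your explicit remark that this map is a bijection (so the supremum is neither lost nor duplicated) is a small but welcome clarification of a point the paper leaves implicit.
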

\begin{proof}
Let all the symbols be as in the present proposition.
Also, let $\{Q_j\}_j\in\Pi_\ell$ with $\ell\in(0,\fz)$.
Then, for any $j$, we have
$$\fint_{Q_j}\mathscr{C}_{R}(f)(y)\,dy
=\fint_{Q_j}f(ry)\,dy
=\fint_{r\times Q_j}f(z)\,dz$$
and hence $(\mathscr{C}_{R}(f))_{Q_j}=f_{r\times Q_j}$.
By this, we have
\begin{align*}
&\sum_{j}\lf|Q_{j}\r|\lf[\fint_{Q_{j}}
\lf|\mathscr{C}_{R}(f)(x)-\lf(\mathscr{C}_{R}(f)\r)_{Q_j}\r|
\,dx\r]^p\\
&\quad=\sum_{j}\lf|Q_{j}\r|\lf[\fint_{Q_{j}}
\lf|f(rx)-f_{r\times Q_j}\r|
\,dx\r]^p
=\sum_{j}\lf|Q_{j}\r|\lf[\fint_{r\times Q_j}
\lf|f(u)-f_{r\times Q_j}\r|
\,du\r]^p\\
&\quad=r^{-n}\sum_{j}\lf|r\times Q_j\r|\lf[\fint_{r\times Q_j}
\lf|f(u)-f_{r\times Q_j}\r|
\,du\r]^p.
\end{align*}
which, together with the observation
$\{r\times Q_j\}_j\in\Pi_{r\ell}$,
further implies that \eqref{-n/p} holds true.
This finishes the proof of Proposition \ref{prop-NoQCM}.
\end{proof}
\begin{remark}\label{rem-RCO}
Notice that $JN_\fz^{\rm con}(\rn)=\BMO(\rn)$;
see Remark \ref{rem-moC}(ii) or \cite[Proposition 2.11]{jtyyz21}.
Also, observe that $\mathscr{C}_{R}$ in Proposition \ref{prop-NoQCM}
is the most elementary quasiconformal mapping.
Letting $r\to0^+$ in \eqref{-n/p}, we find that $p=\fz$ is
the only possible $p$ such that right composition operators,
generated by quasiconformal mappings,
are bounded on $JN_p^{\rm con}(\rn)$.
Therefore, it is interesting to find suitable conditions
of $R$ such that $\mathscr{C}_{R}$ is bounded on $JNQ$ spaces,
which is still \emph{unclear} so far.
\end{remark}

\noindent\textbf{Acknowledgements}.
The authors would like to thank Professor Feng Dai
for some useful discussions on Proposition \ref{s=0},
and also to thank Professor Yuan Zhou
for some useful discussions on Proposition \ref{prop-NoQCM}
as well as the topic of quasiconformal mappings.

\bigskip

\noindent Jin Tao, Zhenyu Yang  and Wen Yuan (Corresponding author)

\medskip

\noindent Laboratory of Mathematics and Complex Systems
(Ministry of Education of China),
School of Mathematical Sciences, Beijing Normal University,
Beijing 100875, People's Republic of China

\smallskip

\noindent{\it E-mails:} \texttt{jintao@mail.bnu.edu.cn} (J. Tao)

\noindent\phantom{{\it E-mails:}} \texttt{zhenyuyang@mail.bnu.edu.cn} (Z. Yang)

\noindent\phantom{{\it E-mails:}} \texttt{wenyuan@bnu.edu.cn} (W. Yuan)


\begin{thebibliography}{99}

\bibitem{bkm16}
L. Berkovits, J. Kinnunen and J. M. Martell,
Oscillation estimates, self-improving results and good-$\lambda$ inequalities,
J. Funct. Anal. 270 (2016), 3559-3590.

\vspace{-0.3cm}

\bibitem{bdlwy19}
J. J. Betancor, X.-T. Duong, J. Li, B. D. Wick and D. Yang,
Product Hardy, BMO spaces and iterated commutators associated
with Bessel Schr\"odinger operators,
Indiana Univ. Math. J. 68 (2019), 247-289.

\vspace{-0.3cm}

\bibitem{bls02}
G. Bourdaud, M. Lanza de Cristoforis and W. Sickel,
Functional calculus on BMO and related spaces,
J. Funct. Anal. 189 (2002), 515-538.

\vspace{-0.3cm}

\bibitem{bms20}
G. Bourdaud, M. Moussai and W. Sickel,
A necessary condition for composition in Besov spaces,
Complex Var. Elliptic Equ. 65 (2020), 22-39.

\vspace{-0.3cm}

\bibitem{bms14}
G. Bourdaud, M. Moussai and W. Sickel,
Composition operators acting on Besov spaces on the real line,
Ann. Mat. Pura Appl. (4) 193 (2014), 1519-1554.

\vspace{-0.3cm}

\bibitem{bms10}
G. Bourdaud, M. Moussai and W. Sickel,
Composition operators on Lizorkin--Triebel spaces,
J. Funct. Anal. 259 (2010), 1098-1128.

\vspace{-0.3cm}

\bibitem{bbm15}
J. Bourgain, H. Brezis and P. Mironescu,
A new function space and applications,
J. Eur. Math. Soc. (JEMS) 17 (2015), 2083-2101.

\vspace{-0.3cm}

\bibitem{b02}
H. Brezis,
How to recognize constant functions. A connection with Sobolev spaces,
Uspekhi Mat. Nauk 57 (2002), 59-74; translation in
Russian Math. Surveys 57 (2002), 693-708.

\vspace{-0.3cm}

\bibitem{bvy21}
H. Brezis, J. Van Schaftingen and P.-L. Yung,
A surprising formula for Sobolev norms,
Proc. Natl. Acad. Sci. USA 118 (2021), Paper No. e2025254118, 6 pp.

\vspace{-0.3cm}

\bibitem{c64}
S. Campanato,
Propriet\`a di una famiglia di spazi funzionali,
Ann. Scuola Norm. Sup. Pisa Cl. Sci. (3) 18 (1964), 137-160.

\vspace{-0.3cm}

\bibitem{cdlsy21}
P. Chen, X.-T. Duong, J. Li, L. Song and L. Yan,
BMO spaces associated to operators with generalised
Poisson bounds on non-doubling manifolds with ends,
J. Differential Equations 270 (2021), 114-184.

\vspace{-0.3cm}

\bibitem{cdlsy17}
P. Chen, X.-T. Duong, J. Li, L. Song and L. Yan,
Carleson measures, BMO spaces and balayages associated
to Schr\"odinger operators,
Sci. China Math. 60 (2017), 2077-2092.

\vspace{-0.3cm}

\bibitem{dhky18}
G. Dafni, T. Hyt\"onen, R. Korte and H. Yue,
The space {$JN_p$}: nontriviality and duality,
J. Funct. Anal. 275 (2018), 577-603.

\vspace{-0.3cm}

\bibitem{dllw19}
X.-T. Duong, H. Li, J. Li and B. D. Wick,
Lower bound of Riesz transform kernels and commutator theorems
on stratified nilpotent Lie groups,
J. Math. Pures Appl. (9) 124 (2019), 273-299.

\vspace{-0.3cm}


\bibitem{dlsvwy21}
X.-T. Duong, J. Li, E. Sawyer, M. N. Vempati, B. D. Wick and D. Yang,
A two weight inequality for Calder\'on--Zygmund operators on
spaces of homogeneous type with applications,
J. Funct. Anal. 281 (2021), Paper No. 109190, 65 pp.

\vspace{-0.3cm}

\bibitem{dlwy21}
X.-T. Duong, J. Li, B. D. Wick and D. Yang,
Characterizations of product Hardy spaces in Bessel setting,
J. Fourier Anal. Appl. 27 (2021), Paper No. 24, 65 pp.

\vspace{-0.3cm}

\bibitem{dy05}
X.-T. Duong and L. Yan,
Duality of Hardy and BMO spaces associated
with operators with heat kernel bounds,
J. Amer. Math. Soc. 18 (2005), 943-973.

\vspace{-0.3cm}

\bibitem{ejpx00}
M. Ess\'en, S. Janson, L. Peng  and J. Xiao,
$Q$ spaces of several real variables,
Indiana Univ. Math. J. 49 (2000), 575-615.

\vspace{-0.3cm}

\bibitem{jtyyz21}
H. Jia, J. Tao, W. Yuan, D. Yang and Y. Zhang,
Special John--Nirenberg--Campanato
spaces via congruent cubes,
Sci. China Math. https://doi.org/10.1007/s11425-021-1866-4.

\vspace{-0.3cm}

\bibitem{jtyyzS1}
H. Jia, J. Tao, W. Yuan, D. Yang and Y. Zhang,
Boundedness of Calder\'on--Zygmund operators on special
John--Nirenberg--Campanato and	Hardy-type spaces via congruent cubes,
Anal. Math. Phys. (2021), https://doi.org/10.1007/s13324-021-00626-w.

\vspace{-0.3cm}

\bibitem{jtyyzS2}
H. Jia, J. Tao, W. Yuan, D. Yang and Y. Zhang,
Boundedness of fractional integrals on special
John--Nirenberg--Campanato and Hardy-type spaces via congruent cubes,
Submitted.

\vspace{-0.3cm}

\bibitem{jyyzLP}
H. Jia, W. Yuan, D. Yang and Y. Zhang,
Estimates for Littlewood--Paley operators on
special John--Nirenberg--Campanato spaces via congruent cubes,
Submitted.

\vspace{-0.3cm}

\bibitem{jn61}
F. John and L. Nirenberg,
On functions of bounded mean oscillation,
Comm. Pure Appl. Math. 14 (1961), 415-426.

\vspace{-0.3cm}

\bibitem{kxzz17}
P. Koskela, J. Xiao, Y. Zhang and Y. Zhou,
A quasiconformal composition problem for the $Q$-spaces,
J. Eur. Math. Soc. (JEMS) 19 (2017), 1159-1187.

\vspace{-0.3cm}

\bibitem{lw17}
J. Li and B. D. Wick,
Characterizations of $H^1_{\Delta N}(\rn)$ and ${\rm BMO}_{\Delta N}(\rn)$
via weak factorizations and commutators,
J. Funct. Anal. 272 (2017), 5384-5416.

\vspace{-0.3cm}

\bibitem{r74}
H. M. Reimann,
Functions of bounded mean oscillation and quasiconformal mappings,
Comment. Math. Helv. 49 (1974), 260-276.

\vspace{-0.3cm}

\bibitem{rs96}
T. Runst and W. Sickel,
Sobolev Spaces of Fractional Order, Nemytskij Operators,
and Nonlinear Partial Differential Equations,
De Gruyter Series in Nonlinear Analysis and Applications 3,
Walter de Gruyter \& Co., Berlin, 1996.

\vspace{-0.3cm}

\bibitem{txyyJFAA}
J. Tao, Q. Xue, D. Yang and W. Yuan,
XMO and weighted compact bilinear commutators,
J. Fourier Anal. Appl. 27 (2021), no. 3, Paper No. 60, 34 pp.

\vspace{-0.3cm}

\bibitem{tyyM}
J. Tao, D. Yang and W. Yuan,
A survey on function spaces of John--Nirenberg type,
Mathematics 9 (2021), 2264, https://doi.org/10.3390/math9182264.

\vspace{-0.3cm}

\bibitem{tyyACV}
J. Tao, D. Yang and W. Yuan,
Vanishing John--Nirenberg spaces,
Adv. Calc. Var. (2021),
https://doi.org/10.1515/acv-2020-0061.

\vspace{-0.3cm}

\bibitem{tyyNA}
J. Tao, D. Yang and W. Yuan,
John--Nirenberg--Campanato spaces,
Nonlinear Anal. 189 (2019), 111584, 36 pp.
\vspace{-0.3cm}

\bibitem{x19}
J. Xiao,
$Q_\alpha$ Analysis on Euclidean Spaces,
Advances in Analysis and Geometry 1, De Gruyter, Berlin, 2019.

\vspace{-0.3cm}

\bibitem{x17}
J. Xiao,
The transport equation in the scaling invariant Besov
or Ess\'en--Janson--Peng--Xiao space,
J. Differential Equations 266 (2019), 7124-7151.

\vspace{-0.3cm}

\bibitem{ycyy20}
S. Yang, D.-C. Chang, D. Yang and W. Yuan,
Weighted gradient estimates for elliptic problems with
Neumann boundary conditions in Lipschitz and (semi-)convex domains,
J. Differential Equations 268 (2020), 2510-2550.

\vspace{-0.3cm}

\bibitem{y10}
H. Yue,
A fractal function related to the John--Nirenberg inequality for $Q_\az(\rn)$,
Canad. J. Math. 62 (2010), 1182-1200.

\vspace{-0.3cm}

\bibitem{yd09}
H. Yue and G. Dafni,
A John--Nirenberg type inequality for $Q_\az(\rn)$,
J. Math. Anal. Appl. 351 (2009), 428-439.

\end{thebibliography}
\end{document}